\documentclass[11pt]{amsart}
\usepackage{amscd}
\usepackage{amsfonts}
\usepackage{color}
\usepackage{amsmath,amssymb,amsthm,latexsym}
\usepackage{amscd}

\usepackage{multicol}
\usepackage{enumerate}
\usepackage{graphicx}
\textwidth 160mm \textheight 220mm

\oddsidemargin=10pt
\evensidemargin=10pt

\linespread{1.06}
\topmargin  -10mm

\newtheorem{theorem}{Theorem}[section]
\newtheorem{proposition}[theorem]{Proposition}
\newtheorem{definition}[theorem]{Definition}
\newtheorem{corollary}[theorem]{Corollary}
\newtheorem{lemma}[theorem]{Lemma}

\numberwithin{equation}{section}
\theoremstyle{remark}
\newtheorem{remark}[theorem]{Remark}
\newtheorem{example}[theorem]{\bf Example}
\newcommand{\R}{\mathbb{R}}
\newcommand{\D}{\mathbb{D}}

\newcommand{\hh}{\mathbb{H}}
\newcommand{\hhh}{\mathbf{h}}
\newcommand{\SSS}{\mathbb{S}}
\newcommand{\dd}{\mathrm{d}}
\newcommand{\C}{\mathbb{C}}
\begin{document}

\title[Willmore deformations between minimal surfaces in $\hh^{n+2}$  $\&$  $\SSS^{n+2}$]{\bf{Willmore deformations between minimal surfaces in $\hh^{n+2}$ and $\SSS^{n+2}$}}

\author{Changping Wang, Peng Wang}
\thanks{CPW was partly supported by the Project 11831005  of NSFC. PW was partly supported by the Project 11971107 of NSFC. The authors are thankful to Prof. Zhenxiao Xie for value discussions}
\address{College of Mathematics and Informatics, FJKLMAA, Fujian Normal University, Fuzhou 350117, P. R. China}
\email{cpwang@fjnu.edu.cn}
\address{College of Mathematics and Informatics, FJKLMAA, Fujian Normal University, Fuzhou 350117, P. R. China}
\email{pengwang@fjnu.edu.cn,
netwangpeng@hotmail.com}
   
\thanks{This work is partially supported by the Project 11831005 and 11971107 of NSFC. The authors are thankful to Prof. Josef Dorfmeister,  Prof. Shimpei Kobayashi, Prof. Xiang Ma and Prof. Nan Ye for valuable discussions}
\begin{abstract}
	In this paper we show that locally there exists a Willmore deformation between minimal surfaces in $\SSS^{n+2}$ and minimal surfaces in $\hh^{n+2}$, i.e., there exists a smooth family of Willmore surfaces $\{y_t,t\in[0,1]\}$ such that $(y_t)|_{t=0}$ is conformally equivalent to a minimal surface in $\SSS^{n+2}$ and  $(y_t)|_{t=1}$  is conformally equivalent to  a  minimal surface in $\hh^{n+2}$. For some cases the deformations are global. Consider the Willmore deformations of the Veronese two-sphere and its generalizations in $S^4$, for any positive number $W_0\in\R^+$, we construct complete minimal surfaces in $\hh^4$ with Willmore energy being equal to $W_0$.  An example  of complete minimal M\"{o}bius strip in $\hh^4$ with Willmore energy $\frac{6\sqrt{5}\pi}{5}\approx10.733\pi$ is also presented. We also show that all isotropic minimal surfaces in $\SSS^4$ admit Jacobi fields different from Killing fields, i.e., they are not ``isolated".
\end{abstract}\date{\today}
\maketitle

{\bf Keywords:} minimal surfaces;  minimal M\"{o}bius strip; $K^\C-$dressing; Willmore energy; Willmore two-spheres.\\

MSC(2020): 53A31;53A10; 53C40; 58E20


\section{Introduction}

Minimal surfaces in $\hh^n$ are important geometric objects in geometry  \cite{A} and mathematical physics \cite{Mald,DGO,AM2010,AM2015} and attract many attentions from different kind of directions (\cite{C1,C2,Lin}). For instance, in \cite{AM2010} it is shown that the renormalized area introduced by Maldacena in \cite{Mald} can be expressed as the Willmore functional of minimal surfaces in $\hh^n$. Moreover, in \cite{AM2015} Alexakis and Mazzeo discussed in details of the geometry and analysis of complete Willmore surfaces in $\hh^3$ which meet the infinity boundary $\partial_{\infty}\hh^3$ orthogonally. Minimal surfaces in $\hh^n$ can be viewed as special kind of Willmore surfaces, which are the critical surface of the Willmore functional. It is natural to consider them under the framework of Willmore surfaces. In \cite{DoWa10,DoWa12} Dorfmeister and Wang started the study of the global geometry of Willmore surfaces in terms of the harmonic conformal Gauss maps and the DPW method. Such an idea was first introduced by H\'{e}lein in \cite{Helein} ( generalized by Xia-Shen \cite{Xia-Shen}). Moreover, in \cite{Wang-2}, a description of minimal surfaces in space forms as special Willmore surfaces is presented.

In this paper, we continue the study minimal surfaces in $\hh^n$ and $\SSS^n$ along this direction. To begin with, let us first recall the characterization of
minimal surfaces in space forms \cite{Wang-2} briefly. Roughly speaking, the DPW method gives a representation of Willmore surfaces in terms of some Lie-algebra-valued meromorphic 1-form called normalized potential \cite{DPW, Helein, DoWa10, DoWa12}.
Then a Willmore surface being minimal in some space form is equivalent to the Lorenzian orthogonality of some (non-zero) constant real vector $\mathbf v$ with some part of the normalized potential \cite{Wang-2}. The vector $\mathbf v$ being lightlike, timelike or spacelike corresponds to the space form $\R^{n+2}$, $\SSS^{n+2}$ or $\hh^{n+2}$ respectively (See \cite{Wang-2} or Theorem \ref{thm-minimal} of Section 2; Compare also \cite{Helein, Xia-Shen} for a slightly different treatment, where a different harmonic map introduced by \cite{Helein} is used).

A key observation due to  this paper is that the Lorenzian orthogonality is preserved by some complex group action, while the minimality in space forms could be changed. This makes it possible to deform minimal surfaces in $\SSS^{n+2}$ into non-minimal Willmore surfaces and furthermore into minimal surfaces in  $\hh^{n+2}$ or conversely\footnote{It is natural to compare this correspondence with the famous Lawson correspondence \cite{Lawson}. A crucial difference is that from a minimal surface in $\SSS^n$, one can obtain a lot of non-isometric minimal surfaces in $\hh^n$. See Section 5.}:
\begin{theorem} (See Theorem 4.1)  Let $y:U\rightarrow \SSS^{n+2}$ be a minimal surface from a simple connected open subset $U\subset M$. There exists a family of Willmore surfaces $y_t:U'\subset U\rightarrow \SSS^{n+2}$, $t\in[0,\pi]$, such that $y_{t}|_{t=0}=y$ and $y_{t}|_{t=\pi/2}$ is conformally equivalent to a minimal surface in  $\hh^{n+2}$. Here $U'$ is an open subset of $U$.
\end{theorem}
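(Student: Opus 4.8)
The plan is to combine the DPW description of Willmore surfaces with the characterization of minimality recalled in Theorem \ref{thm-minimal}, realizing the deformation as a one-parameter family of complex group actions on the normalized potential. Since $y$ is minimal in $\SSS^{n+2}$ and $U$ is simply connected, I would first pass to its harmonic conformal Gauss map and extract the normalized potential $\eta=\lambda^{-1}\hat{B}\,\dd z$. By Theorem \ref{thm-minimal}, minimality in $\SSS^{n+2}$ is encoded by the Lorentzian orthogonality $\langle\mathbf{v}_0,\hat{B}\rangle\equiv 0$ for a fixed nonzero \emph{timelike} real vector $\mathbf{v}_0$; it is precisely the causal type of $\mathbf{v}_0$ that distinguishes the three space forms.

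Next I would introduce a one-parameter family of complex transformations acting on $\eta$ (a $K^\C$-dressing, in the terminology of the paper), engineered so that the orthogonality relation persists in the form $\langle\mathbf{v}_t,\hat{B}_t\rangle\equiv 0$, where $\hat{B}_t$ is the transformed potential component and $\mathbf{v}_t=\cos t\,\mathbf{v}_0+\sin t\,\mathbf{w}$ for a fixed spacelike unit vector $\mathbf{w}$ Lorentz-orthogonal to $\mathbf{v}_0$. The point, which is exactly the key observation quoted above, is that although the action is genuinely complex and does \emph{not} preserve the Lorentzian norm of the distinguished vector, it does preserve the vanishing of the pairing; consequently the norm $\langle\mathbf{v}_t,\mathbf{v}_t\rangle=-\cos 2t$ sweeps monotonically from $-1$ (timelike) at $t=0$, through $0$ (lightlike) at $t=\pi/4$, to $+1$ (spacelike) at $t=\pi/2$. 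One must check that the dressing respects the twisting and reality conditions defining an admissible Willmore potential, so that the DPW output is again a Willmore surface and not merely a harmonic map into the symmetric-space target; this is a direct verification from the fact that the action lies in $K^\C$.

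I would then run the DPW recipe on each transformed potential $\eta_t$: integrate to a holomorphic extended frame, apply the Iwasawa factorization, and read off the Willmore surface $y_t$ together with its conformal Gauss map, with smooth dependence on $t$ following from smooth dependence of the factorization on the potential. At $t=0$ the action is trivial, so $y_0=y$; at $t=\pi/2$ the distinguished vector $\mathbf{v}_{\pi/2}=\mathbf{w}$ is spacelike, so Theorem \ref{thm-minimal} identifies $y_{\pi/2}$ as conformally equivalent to a minimal surface in $\hh^{n+2}$.

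The main obstacle is analytic rather than algebraic: the Iwasawa splitting underlying DPW is guaranteed only on the big cell, so for each $t$ the reconstruction is valid on a neighborhood, and one must intersect over $t\in[0,\pi]$ — equivalently, shrink to where the family stays in the big cell and $y_t$ remains an immersion — to obtain a common open subset $U'\subset U$ on which all $y_t$ are simultaneously defined; this is exactly why the conclusion is local. A secondary delicate point is reality: because the dressing is complex, one must verify that the transformed orthogonality vector $\mathbf{v}_t$ can indeed be taken real with the asserted causal type, which is what guarantees the endpoint is a bona fide minimal surface in a real space form.
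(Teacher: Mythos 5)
Your overall strategy --- encode minimality via Theorem \ref{thm-minimal} as Lorentzian orthogonality of the potential to a constant vector, act on the normalized potential by a one-parameter subgroup of $K^{\C}$, and read off the causal type of the annihilating vector at the endpoints, with the Iwasawa big cell accounting for the locality of $U'$ --- is exactly the paper's (Theorem \ref{th-min-sn}). But the mechanism you describe for how the causal type changes is wrong, and taken literally your ``engineered'' family cannot exist. If $\hat B_{1,t}=T_{t}\hat B_1$ with $T_t\in SO(1,3,\C)$ and the potential is full (so that the annihilator $\{\mathbf v\in\C^4:\mathbf v^tI_{1,3}\hat B_1\equiv0\}$ is the complex line $\C\,\mathbf v_0$ with $\mathbf v_0$ real timelike), then the annihilator of $\hat B_{1,t}$ is the complex line $\C\,T_t\mathbf v_0$. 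Since $SO(1,3,\C)$ preserves the complex bilinear form, every nonzero vector of that line has complex norm equal to a nonzero square times $\mathbf v_0^tI_{1,3}\mathbf v_0=-1$. Your vector $\mathbf v_t=\cos t\,\mathbf v_0+\sin t\,\mathbf w$ is real with norm $-\cos 2t$, and at $t=\pi/4$ it is a nonzero lightlike vector; such a vector can never lie in $\C\,T_t\mathbf v_0$, because $0$ is not a nonzero multiple of $-1$. So no $K^{\C}$-dressing realizes your interpolation, and the picture of the norm ``sweeping monotonically through $0$'' contradicts the invariance of the light cone under $K^{\C}$.

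The paper's actual mechanism is different, and it is precisely the content of the ``key observation.'' With the circle subgroup $T_{1,t}$ of Section 4.2 the annihilating direction is $\mathbf v_t=(\cos t,\, i\sin t,\,0,\,0)^t$, of \emph{constant} complex norm $-1$. For $t\notin\{0,\pi/2,\pi,3\pi/2\}$ the line $\C\,\mathbf v_t$ contains no real vector at all, so by Theorem \ref{thm-minimal} the intermediate $y_t$ are not minimal in any space form (the paper proves exactly this, using the linear independence of the entries of \eqref{eq-min-sn-p}). At $t=\pi/2$ one has $\mathbf v_{\pi/2}=i\,(0,1,0,0)^t$: the real representative is obtained by rescaling by $-i$, which multiplies the norm by $(-i)^2=-1$ and converts timelike into spacelike. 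This discrete sign flip coming from the factor $i$, not a continuous passage through the light cone, is what moves the surface from $\SSS^{n+2}$ to $\hh^{n+2}$. Your endpoint conclusions are correct, but the step you dismiss as ``a secondary delicate point'' (whether the transformed vector can be taken real, and with which causal type) is in fact the crux of the whole argument, and your construction gets it wrong for every intermediate $t$.
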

Such a phenomenon is new to the authors' best knowledge. Note that in \cite{BFLPP,BQ}, dressing actions of Willmore surfaces are discussed. But they are different from the actions discussed here since here we use simply elements in the complexified subgroup $K^{\C}$. For a general discussion of dressing actions, we refer to \cite{Gu-Oh, TU, Uh}.\vspace{3mm}

One of the most simple minimal surfaces in $\SSS^{n+2}$ is the Veronese two-sphere in $\SSS^4$. We show explicitly the Willmore deformations for the Veronese two-sphere in $\SSS^4$. Moreover, we obtain a lot of explicit examples of complete minimal disks in $\hh^4$ which are deformed from the Veronese two-sphere and its generalizations:
\begin{example} (of Proposition \ref{prop-h4k})
	Set\begin{equation}
	Y_t=\left(
	\begin{array}{c}
	y_0 \\
	y_1 \\
	y_2 \\
	y_3 \\
	y_4 \\
	y_5 \\
	\end{array}
	\right)
	=\left(
	\begin{array}{ccccc}
	(k-1)(e^{2t}r^{2k+2}+1)+(k+1)(r^{2k}+e^{2t}r^2)\\
	-(k-1)(e^{2t}r^{2k+2}+1)+(k+1)(r^{2k}+e^{2t}r^2)\\
	i e^t\sqrt{k^2-1}(1+r^{2k})(z-\bar z)\\
	e^t\sqrt{k^2-1}(1+r^{2k})(z+\bar z)\\
	i \sqrt{k^2-1}(1-e^{2t}r^{2})(z^{k}-\bar z^{k})\\
	-\sqrt{k^2-1}(1-e^{2t}r^{2})(z^{k}+\bar z^{k})\\
	\end{array}
	\right).
	\end{equation}
	The equation $y_1=0$ gives two circles of $S^2$, which divide $S^2$ into three parts. On each part of them,
	\[y_t=\frac{1}{y_1}\left(
	\begin{array}{ccccc}
	y_0 & y_2 & y_3 & y_4 & y_5 \\
	\end{array}
	\right)\] provides a proper, complete minimal surface in $\hh^4$ with finite Willmore energy.
	Moreover, for  any number $W_0\in\R^+$,  there exist some $k\in\mathbb Z^+\setminus\{1\}$ and $t'\in\R$ such that one of the above three minimal surfaces, has Willmore energy $W_0$. Note that when $k=1$, $y_t$ is in the Willmore deformation family of the Veronese sphere in $S^4$.
\end{example}

\begin{remark}\
	\begin{enumerate}
		\item This is  different from the value distribution of Willmore two-spheres in $S^4$ \cite{Bryant1984,Mon}, where the Willmore energy is always $4\pi k$ for some $k\mathbb Z^+\cup\{0\}$.
		Note that different from the cases discussed in \cite{AM2010,AM2015}, the examples constructed here do not intersect the infinite boundary $\SSS^3_{\infty}=\partial_{\infty}\hh^4$ orthogonally, since there are equivariant and not rotating. But they do intersect the infinite boundary $\SSS^3_{\infty}=\partial_{\infty}\hh^4$  with a constant angle.
		
		\item By embedding $\hh^4$ conformally into $\SSS^4$ via the canonical map (see e.g. \cite{Ba-Bo,BPP,WangCP}) \[x=(x_0,x_1,\cdots,x_4)\mapsto \frac{1}{x_0}(1,x_1,\cdots,x_4),\] the three minimal surfaces form a Willmore immersion from $S^2$ to $\SSS^4$ by crossing the infinite boundary of $\hh^4$, which gives an explicit illustration of Babich and Bobenko's famous construction of Willmore tori (with umbilical circles) in $\SSS^3$ via gluing complete minimal surfaces in $\hh^3$ at the infinite boundary of $\hh^3$ in \cite{Ba-Bo}. A slight difference is that, although here the intersection of these surfaces with the infinite boundary $\SSS^3_{\infty}$ is not orthogonal, the whole surface stays smooth. We refer to Section 5.3 for more details.

	\end{enumerate}
\end{remark}

We also obtain a complete minimal M\"{o}bius strip in $\hh^4$ with Willmore energy $\frac{6\sqrt{5}\pi}{5}\approx10.733\pi$ (see Section 5.5). It can be extended as above to obtain a branched Willmore $\R P^2$ in $S^4$ (Compare \cite{Ishihara}). It is natural to ask the infimum of the Willmore energy of non-oriented complete minimal surfaces in $\hh^n$, in comparison with the famous Willmore conjecture, which is  proved by Marques and Neves \cite{Marques}  for the case of $\SSS^3$. This example shows that the infimum is $\leq\frac{6\sqrt{5}\pi}{5}$.

Using the $K^{\C}$ dressing actions, we can construct concretely a family of isotropic minimal surfaces in $\SSS^4$ for each such surface, which shows that they are not isolated.

\vspace{5mm}
This paper is organized as follows: in Section 2 we will review the basic theory of Willmore surfaces and loop group description of them in terms of their conformal Gauss map. Then in Section 3 we will discuss in details of the $K^{\C}-$dressing of Willmore surfaces in $\SSS^{n+2}$, as well as applications to minimal surfaces in $\SSS^{n+2}$ and $\hh^{n+2}$. Section 4 is a description of  two kind of one parameter group dressing actions on minimal surfaces in $\SSS^{n+2}$ and $\hh^{n+2}$. Then in Section 5 we will focus on examples of complete minimal surfaces in $\hh^4$ with bounded Gauss curvature and finite Willmore energy.
In Section 6 we show that isotropic minimal surfaces in $\SSS^4$ have non-trivial minimal deformations.
The paper is ended by an appendix for the technical proof of a lemma.

\section{Surface theory of Willmore surfaces and the DPW constructions}

In this section we will first recall the basic theory about Willmore surfaces in $\SSS^{n+2}$. Then we will collect the basic DPW theory for harmonic maps in symmetric space and its applications to Willmore surfaces.

\subsection{ Willmore surfaces in $\SSS^{n+2}$}
Here we will follow the treatment for Wilmore surfaces in \cite{BPP,DoWa10,DoWa12,Ma}. {Note that in \cite{Helein,Xia-Shen}, different frames are used in the spirits of \cite{Bryant1984} and \cite{WangCP} respectively. }
Let $ \mathbb{R}^{n+4}_1$ be the Lorentz-Minkowski space with the Lorentzian metric
\[\hbox{$\langle  x,y\rangle= -x_{0}y_0+\sum_{j=1}^{n+1}x_jy_j=x^t I_{1,n+1} y,$ for all $
	x,y\in\R^{n+4}.$}\] Here  $I_{1,n+3}=diag\left(-1,1,\cdots,1\right). $
Let $\mathcal{C}^{n+3}_+= \lbrace x \in \mathbb{R}^{n+4}_{1} |\langle x,x\rangle=0 , x_0 >0 \rbrace $ be
the forward light cone. Let $Q^{n+2}=\mathcal{C}^{n+3}_+/ \R^+$ be the  projective light cone. For a point $Y\in \mathcal {C}^{n+3}_+$, we denote by $[Y]$ its projection in $Q^{n+2}$. Then
we can identify $S^{n+2}$ with $Q^{n+2}$ by setting $y\in S^{n+2}$ to $[Y=(1,y)]\in  Q^{n+2}$.
Let $y:M\rightarrow \SSS^{n+2}$ be a conformal immersion from a Riemann surface $M$. Let $z$ be a local complex coordinate on $U\subset M$ with $e^{2\omega}=2\langle y_z,y_{\bar{z}}\rangle$. We have a canonical lift $Y=e^{-\omega}(1,y)$ into $\mathcal{C}^{n+3}$ with respect to $z$ since $|Y_z|^2=\frac{1}{2}$.
Moreover, there exists a global bundle decomposition
$M\times \mathbb{R}^{n+4}_{1}=V\oplus V^{\perp}.$
Here $V_p={\rm Span}\{Y,{\rm Re}Y_{z},{\rm Im}Y_{z},Y_{z\bar{z}}\}|_{p} \hbox{ for } p\in M$, and $V^{\perp}|_{p}$ is the orthogonal complement of $V_p$ in $ \mathbb{R}^{n+4}_{1}$. Note that $V_p$ is a 4-dimensional Lorenzian subspace and $V^{\perp}|_{p}$ is an $(n)-$dimensional Euclidean subspace. Denote by $V_{\mathbb{C}}$ and
$V^{\perp}_{\mathbb{C}}$ the complexifications of $V$ and $V^{\perp}$ respectively.
Let $\{Y,Y_{z},Y_{\bar{z}},N\}$  be a  frame of
$V_{\mathbb{C}}$ such that
$\langle N,Y_{z}\rangle=\langle N,Y_{\bar{z}}\rangle=\langle
N,N\rangle=0,\ \langle N,Y\rangle=-1$. Let $D$
be the normal connection on $V_{\mathbb{C}}^{\perp}$, and $\psi\in
\Gamma(V_{\mathbb{C}}^{\perp})$ be an arbitrary section of $V_{\mathbb{C}}^{\perp}$.
Then we have:
\begin{equation}\label{eq-moving}
\left\{\begin {array}{lllll}
Y_{zz}&=&-\frac{s}{2}Y+\kappa,  \\
Y _{z\bar{z}}&=&-\langle \kappa,\bar\kappa\rangle Y+\frac{1}{2}N, \\
N_{z}&=&-2\langle \kappa,\bar\kappa\rangle Y_{z}-sY_{\bar{z}}+2D_{\bar{z}}\kappa, \\
\psi_{z}&=&D_{z}\psi+2\langle \psi,D_{\bar{z}}\kappa\rangle Y-2\langle
\psi,\kappa\rangle Y_{\bar{z}}. \\
\end {array}\right. \ \hbox{ Structure equations. }
\end{equation}
Here $\kappa$ and $s$ are named as \emph{the conformal Hopf differential} and
\emph{the Schwarzian} of $y$ respectively \cite{BPP}.
The integrability conditions are
as follows:
\begin{equation}\label{eq-integ}
\left\{\begin {array}{lllll}
\frac{1}{2}s_{\bar{z}}=3\langle
\kappa,D_z\bar\kappa\rangle +\langle D_z\kappa,\bar\kappa\rangle,\  &  \hbox{ Gauss eq.}  \\
{\rm Im}(D_{\bar{z}}D_{\bar{z}}\kappa+\frac{\bar{s}}{2}\kappa)=0,\  &  \hbox{ Codazzi eq.}  \\
R^{D}_{\bar{z}z}\psi=D_{\bar{z}}D_{z}\psi-D_{z}D_{\bar{z}}\psi =
2\langle \psi,\kappa\rangle\bar{\kappa}- 2\langle
\psi,\bar{\kappa}\rangle\kappa,\  &  \hbox{ Ricci eq.}
\end {array}\right.
\end{equation}
The Willmore energy of $y$ is defined to be
\[W(y)=\frac{i}{2}\int_M|\kappa|^2\dd z\wedge \dd \bar z.\]
Let $H$ and $K$ denote the mean curvature and Gauss curvature of $y$ in $\SSS^{n+2}$ respectively. We have
\[W(y)=\int_M(H^2-K+1)\dd M.\]
Note that in many cases the Willmore energy is also defined as
\[\tilde W(y)=\int_M(H^2+1)\dd M=W(y)+\int_MK\dd M.\]
In particular, for an oriented closed surface $M$ with Euler number $\chi(M)$,
\[\tilde W(y)=W(y)+2\pi\chi(M).\]
For compact surfaces with boundary, to get a conformal invariant functional, one needs to use $W(y)$ instead of $\tilde W(y)$ (See e.g. \cite{AM2010,AM2015,Sch}).

For a surface in hyperbolic space $x:M\rightarrow \hh^{n+2}$, with or without boundary, the conformal invariant Willmore energy is defined to be (See e.g. \cite{AM2010,AM2015,Sch}).
\begin{equation}\label{eq-hn-w-energy}
W(x)=\int_M(H^2-K-1)\dd M.
\end{equation}
By the Gauss equation of $x$ one has
\[H^2-K-1=\frac{1}{2}(S-2H^2),\]
where $S$ is the square of the length of the second fundament form of $x$ (Compare Theorem 1.2 of \cite{AM2010}). For the case of surfaces in $S^{n+2}$, see (1.2) and (2.8) of \cite{Li}.
\vspace{3mm}

It is well-known that Willmore surfaces can be characterized as follows.
\begin{theorem} \cite{Bryant1984}, \cite{Ejiri1988}, \cite{BPP}:
	$y$ is a Willmore surface if and only if the Willmore equation holds
	\begin{equation}\label{eq-willmore}
	D_{\bar{z}}D_{\bar{z}}\kappa+\frac{\bar{s}}{2}\kappa=0;
	\end{equation}
	if and only if  the conformal Gauss map $Gr:M\rightarrow
	Gr_{1,3}(\mathbb{R}^{n+4}_{1})=SO^+(1,n+1)/SO^+(1,3)\times SO(n)$ of $y$ is harmonic. Here $Gr$ is defined as
	\[
	Gr:=Y\wedge Y_{u}\wedge Y_{v}\wedge N=-2i\cdot Y\wedge Y_{z}\wedge
	Y_{\bar{z}} \wedge N.
	\]
\end{theorem}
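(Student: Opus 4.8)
The plan is to establish the two stated equivalences in turn: (i) that $y$ is a critical point of the Willmore functional if and only if the Willmore equation \eqref{eq-willmore} holds, and (ii) that \eqref{eq-willmore} holds if and only if the conformal Gauss map $Gr$ is harmonic. The first link is a first-variation computation, while the second is a direct identification of the tension-field (harmonic map) equation of $Gr$ with \eqref{eq-willmore} by means of the structure equations \eqref{eq-moving}.

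For (i), I would compute the first variation of $W(y)=\frac{i}{2}\int_M|\kappa|^2\,\dd z\wedge\dd\bar z$. Because $W$ is invariant under reparametrization and under the conformal group, variations of $y$ along the $4$-dimensional bundle $V=\mathrm{Span}\{Y,\mathrm{Re}\,Y_z,\mathrm{Im}\,Y_z,Y_{z\bar z}\}$ — which account for rescalings of the canonical lift, tangential reparametrizations, and motions of the mean-curvature sphere — leave $W$ unchanged; hence it suffices to vary $y$ through a one-parameter family whose infinitesimal generator is a normal section $\psi\in\Gamma(V^\perp)$, the normal bundle of $y$ in $\SSS^{n+2}$ being identified with $V^\perp$. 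Tracking the induced change of $Y$ and of $\kappa$, I would use the fourth line of \eqref{eq-moving}, which governs how sections of $V^\perp$ differentiate and couples $\psi$ to $\kappa$ through the normal connection $D$, to write $\delta\kappa$ as a second-order expression in $\psi$. Substituting into $\delta W=\frac{i}{2}\int_M 2\,\mathrm{Re}\langle\delta\kappa,\bar\kappa\rangle\,\dd z\wedge\dd\bar z$ and integrating by parts twice (Stokes on $M$, for compactly supported variations), the variation collapses to a pairing of the form $\mathrm{Re}\int_M\big\langle\psi,\,D_{\bar z}D_{\bar z}\kappa+\tfrac{\bar s}{2}\kappa\big\rangle\,\dd A$. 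As $\psi$ ranges over all normal sections, $\delta W=0$ is equivalent to the vanishing of the real part of $D_{\bar z}D_{\bar z}\kappa+\tfrac{\bar s}{2}\kappa$; combined with the Codazzi equation in \eqref{eq-integ}, which forces the imaginary part to vanish identically, this is exactly the full Willmore equation \eqref{eq-willmore}.

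For (ii), I would compute the tension field of $Gr=Y\wedge Y_u\wedge Y_v\wedge N=-2i\,Y\wedge Y_z\wedge Y_{\bar z}\wedge N$ viewed as a map into the pseudo-Riemannian symmetric space $SO^+(1,n+1)/SO^+(1,3)\times SO(n)$. The tangent space at $Gr$ is identified with $\mathrm{Hom}(V,V^\perp)$, and differentiating the decomposable $4$-vector via \eqref{eq-moving} shows that the component of $dGr$ rotating $V$ into $V^\perp$ is governed precisely by $\kappa$ (and $\bar\kappa$). Computing the second covariant derivative and tracing with respect to the conformal metric, the $V^\perp$-valued part of the tension field reduces to a nonzero constant multiple of $D_{\bar z}D_{\bar z}\kappa+\tfrac{\bar s}{2}\kappa$ together with its conjugate, so that $Gr$ is harmonic exactly when \eqref{eq-willmore} holds. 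I expect the main obstacle to be the bookkeeping in (i): computing $\delta\kappa$ correctly while respecting the normalization $|Y_z|^2=\tfrac12$ of the canonical lift (which forces a compensating conformal rescaling along the variation), and carrying out the two integrations by parts so that precisely the operator $D_{\bar z}D_{\bar z}\kappa+\tfrac{\bar s}{2}\kappa$ is produced; the reduction to purely normal variations via the invariances of $W$ must likewise be justified with care.
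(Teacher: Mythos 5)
This theorem is quoted from \cite{Bryant1984}, \cite{Ejiri1988} and \cite{BPP}; the paper supplies no proof of its own, so there is nothing internal to compare your argument against. Your outline is the standard argument from those references and is essentially correct: the first equivalence is the Euler--Lagrange computation (normal variations suffice, the first variation pairs $\psi$ against $\mathrm{Re}\bigl(D_{\bar z}D_{\bar z}\kappa+\tfrac{\bar s}{2}\kappa\bigr)$, and the Codazzi equation in \eqref{eq-integ} kills the imaginary part for free), and the second is the identification of the tension field of $Gr$ with the same operator via the structure equations \eqref{eq-moving}. Two points deserve more care than your sketch gives them. First, the $\mathfrak{p}$-part of $\dd Gr$ is \emph{not} governed by $\kappa$ alone: differentiating $Y\wedge Y_z\wedge Y_{\bar z}\wedge N$ with \eqref{eq-moving} produces both $Y\wedge\kappa\wedge Y_{\bar z}\wedge N$ and $2\,Y\wedge Y_z\wedge Y_{\bar z}\wedge D_{\bar z}\kappa$, which is exactly why the matrix $B_1$ displayed after the frame $F$ in Section 2 carries the entries $\beta_j$ (components of $D_{\bar z}\kappa$) alongside the $k_j$; the tension-field computation must track both blocks before it collapses to $D_{\bar z}D_{\bar z}\kappa+\tfrac{\bar s}{2}\kappa$. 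Second, ``variations of $y$ along $V$'' is loose: since $Y^{\perp}=\mathrm{Span}\{Y,\mathrm{Re}\,Y_z,\mathrm{Im}\,Y_z\}\oplus V^{\perp}$ while $\langle Y,Y_{z\bar z}\rangle=-\tfrac12$, a direction such as $Y_{z\bar z}$ does not correspond to a variation of the map $y$ into $\SSS^{n+2}$ at all but to a change of lift, so the reduction to $\psi\in\Gamma(V^{\perp})$ should be justified by splitting an arbitrary variation into a tangential (reparametrization) part and a $V^{\perp}$-valued part. Neither point breaks the argument; both are handled in \cite{BPP}.
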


A local lift of $Gr$ into $SO^+(1,n+3)$ can be chosen as \begin{equation}\label{F}
F:=\left(\frac{1}{\sqrt{2}}(Y+N),\frac{1}{\sqrt{2}}(-Y+N),e_1,e_2,\psi_1,\cdots,\psi_{n}\right): U\rightarrow  SO^+(1,n+1)
\end{equation}with Maurer-Cartan form
\[\alpha=F^{-1}\dd F=\left(
\begin{array}{cc}
A_1 & B_1 \\
-B_1^tI_{1,3} & A_2 \\
\end{array}
\right)\dd z+\left(
\begin{array}{cc}
\bar{A}_1 & \bar{B}_1 \\
-\bar{B}_1^tI_{1,3}& \bar{A}_2 \\
\end{array}
\right)\dd\bar{z},\]
and
\begin{equation}\label{eq-b1} B_1=\left(
\begin{array}{ccc}
\sqrt{2} \beta_1 & \cdots & \sqrt{2}\beta_{n} \\
-\sqrt{2} \beta_1 & \cdots & -\sqrt{2}\beta_{n} \\
-k_1 & \cdots & -k_{n} \\
-ik_1 & \cdots & -ik_{n} \\
\end{array}
\right).   \end{equation} Here $\{\psi_j\}$ is an orthonormal basis of $V^{\perp}$ and
$\kappa=\sum_j k_j\psi_j ,\ D_{\bar{z}}\kappa=\sum_j\beta_j\psi_j,\ k =\sqrt{\sum_j|k_j|^2}.$

\ \\

Finally we recall that for a surface $y$ in $\SSS^4$, it is called isotropic if and only if its Hopf differential satisfies
\[\langle\kappa,\kappa\rangle\equiv0\] (see \cite{Calabi,Ejiri1988,Mon,Mus1}).
This is a conformal invariant condition and it plays important roles in the classification of minimal two-spheres \cite{Calabi} and Willmore two-spheres in $\SSS^4$  \cite{Ejiri1988,Mon,Mus1}. It is well-known that if $y$ is an isotropic surface in $S^4$, then it is Willmore \cite{Ejiri1988}.\vspace{3mm}

\subsection{The  DPW construction of  Willmore surfaces in $\SSS^{n+2}$ via conformal Gauss maps}

\subsubsection{The  DPW construction of harmonic maps}
We will recall the basic theory of the DPW methods(See \cite{DPW,DoWa12} for more details).
Let $G/K$ be a symmetric space defined by the involution $\sigma: G\rightarrow G$, with $G^{\sigma}\supset K\supset(G^{\sigma})_0$, and Lie algebras $\mathfrak{g}=Lie(G)$, $\mathfrak{k}=Lie(K)$. Then
$ \mathfrak{g}=\mathfrak{k}\oplus\mathfrak{p}, \
[\mathfrak{k},\mathfrak{k}]\subset\mathfrak{k}, \
[\mathfrak{k},
\mathfrak{p}]\subset\mathfrak{p},
\ [\mathfrak{p},\mathfrak{p}]\subset\mathfrak{k}.$

Let $f: M\rightarrow G/K$ be a harmonic map. Let $z$ be a complex coordinate on $U\subset M$. Then there exists a frame $F: U\rightarrow G$ of $f$ with Maurer-Cartan form $F^{-1} \dd F= \alpha$. The Maurer-Cartan equation reads
$\dd\alpha+\frac{1}{2}[\alpha\wedge\alpha]=0.$ Decompose it with respect to the Cartan decomposition, we obtain
$ \alpha=\alpha_0+\alpha_1 $ with $\alpha_0\in \Gamma(\mathfrak{k}\otimes T^*M), \
\alpha_1\in \Gamma(\mathfrak{p}\otimes T^*M)$.
Decompose $\alpha_1$ further into the $(1,0)-$part $\alpha_{1}'$ and the $(0,1)-$part $\alpha_{1}''$. Introducing $\lambda\in S^1$, set
\begin{equation}\label{eq-alphaloop}
\alpha_{\lambda}=\lambda^{-1}\alpha_{1}'+\alpha_0+\lambda\alpha_{1}'',  \  \lambda\in  S^1.
\end{equation}
It is well known (\cite{DPW}) that the map  $f:M\rightarrow G/K$ is harmonic  if and only if
\[
\dd  \alpha_{\lambda}+\frac{1}{2}[\alpha_{\lambda}\wedge\alpha_{\lambda}]=0\ \ \hbox{for all}\ \lambda \in  S^1.
\]

\begin{definition}Let $F(z,\lambda)$ be a solution to the equation $ \dd  F(z,\lambda)= F(z, \lambda)\alpha_{\lambda},\ F(0,\lambda)=F(0)
	.$ Then $F(z,\lambda)$ is called the {\em extended frame} of the harmonic map $f$.
	Moreover, \[f(z,\lambda):= F(z,\lambda)\mod K\]
	are harmonic maps in $G/K$ for all $\lambda\in S^1$, called the associated family of $f$.
	Note that $f(z,\lambda)=f$ and $F(z,1)=F(z)$.
\end{definition}

So far we have related harmonic maps with maps into loop groups. Moreover, we need the Iwasawa and Birkhoff decompositions for loop groups.
Let $G^{\mathbb{C}}$ be the complexified Lie group of $G$. Extend $\sigma$ to an inner involution of $G^{\mathbb{C}}$  with  $Fix_{\sigma}G^\C=K^{\mathbb{C}}$. Let $\Lambda G^{\mathbb{C}}_{\sigma}$ be the group of loops in $G^\mathbb{C}$ twisted by $\sigma$. Let $\Lambda^-_*G^{\mathbb{C}}_{\sigma}$ be the group of loops  that extends holomorphically into $\infty$ and take values $I$ at $\infty$.
\begin{theorem}\label{thm-iwasawa}   \cite{DPW},  \cite{DoWa10}
	\begin{enumerate}
		\item (Iwasawa decomposition):
		There exists a closed, connected solvable subgroup $S \subseteq K^\C$ such that
		the multiplication $\Lambda G_{\sigma}^0 \times \Lambda^{+}_S G^{\mathbb{C}}_{\sigma}\rightarrow
		\Lambda G^{\mathbb{C}}_{\sigma}$ is a real analytic diffeomorphism onto the open subset
		$ \Lambda G_{\sigma}^0 \cdot \Lambda^{+}_S G^{\mathbb{C}}_{\sigma} = \mathcal{I}^{\mathcal{U}}_e \subset(\Lambda G^{\mathbb{C}}_{\sigma})^0
		$, with $\Lambda_{S}^+ G^{\mathbb{C}}_{\sigma}:=\{\gamma\in\Lambda^+G^{S}_{\sigma}~|~\gamma|_{\lambda=0}\in S   \}.$
		\item (Birkhoff decomposition):
		The multiplication $\Lambda_{*}^{-} {G}^{\mathbb{C}}_{\sigma}\times
		\Lambda^{+}_{\mathcal{C}} {G}^{\mathbb{C}}_{\sigma}\rightarrow
		\Lambda {G}^{\mathbb{C}}_{\sigma}$ is an analytic  diffeomorphism onto the
		open, dense subset $\Lambda_{*}^{-} {G}^{\mathbb{C}}_{\sigma}\cdot
		\Lambda^{+}_{\mathcal{C}} {G}^{\mathbb{C}}_{\sigma}$ of $\Lambda {G}^{\mathbb{C}}_{\sigma}$ {\em (the big Birkhoff cell)}, with  $\Lambda_{\mathcal C}^+ G^{\mathbb{C}}_{\sigma}:=\{\gamma\in\Lambda^+G^{\mathbb{C}}_{\sigma}~|~\gamma|_{\lambda=0}\in (K^\C)^0 \}.$
	\end{enumerate}
\end{theorem}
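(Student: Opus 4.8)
The plan is to establish the two factorizations separately, beginning with the Birkhoff decomposition, which is the more algebraic of the two, and then bootstrapping to the Iwasawa decomposition, where the non-compactness of $G=SO^+(1,n+1)$ is the genuine difficulty.

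For the Birkhoff decomposition I would view each twisted loop $\gamma\in\Lambda G^{\C}_{\sigma}$ as a matrix-valued function on the circle and invoke the classical Riemann--Hilbert (Birkhoff) factorization $\gamma=\gamma_{-}\cdot\mathrm{diag}(\lambda^{k_1},\dots,\lambda^{k_m})\cdot\gamma_{+}$, in which $\gamma_{-}$ extends holomorphically across $\lambda=\infty$ and $\gamma_{+}$ across $\lambda=0$. The big cell is exactly the locus where all partial indices $k_j$ vanish; it is cut out by the non-vanishing of finitely many determinant (tau) functions and is therefore open and dense. Uniqueness is immediate: if $\gamma_{-}^{(1)}\gamma_{+}^{(1)}=\gamma_{-}^{(2)}\gamma_{+}^{(2)}$, then $(\gamma_{-}^{(2)})^{-1}\gamma_{-}^{(1)}=\gamma_{+}^{(2)}(\gamma_{+}^{(1)})^{-1}$ is holomorphic both inside and outside the disk, hence constant by Liouville; the normalization $\gamma_{-}|_{\lambda=\infty}=I$ forces this constant to be $I$, while the constraint $\gamma_{+}|_{\lambda=0}\in(K^{\C})^0$ removes the residual component ambiguity. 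It then remains to check that the factors inherit the $\sigma$-twisting, which follows by applying $\sigma$ to the factorization and using its uniqueness, and that the factors depend real-analytically on $\gamma$, which comes from the implicit function theorem applied to the (elliptic) factorization problem.

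For the Iwasawa decomposition I would combine the finite-dimensional Iwasawa splitting of the real form with the loop-group picture. In the compact model of \cite{DPW} the analogous statement is a global diffeomorphism onto the whole connected loop group, proved through the action on the Grassmannian of $L^2(S^1;\C^m)$; I would transport that argument here by realizing $\Lambda G^0_{\sigma}$ as acting on the corresponding Grassmannian and showing its orbit through the base point is open. The essential new feature is that the value $\gamma|_{\lambda=0}$ of the positive factor must be constrained to lie in the closed, connected solvable subgroup $S\subseteq K^{\C}$; this $S$ is precisely the complexified ``$AN$''-part supplied by the finite-dimensional Iwasawa decomposition of the non-compact group $G=SO^+(1,n+1)$, and it replaces the clean constant-loop intersection $\Lambda G\cap\Lambda^+G^{\C}=G$ available in the compact setting. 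Openness and the diffeomorphism property would then follow from verifying that the differential of the multiplication map realizes a direct-sum splitting $\Lambda\mathfrak g_{\sigma}\oplus\Lambda^+_{\mathfrak s}\mathfrak g^{\C}_{\sigma}=\Lambda\mathfrak g^{\C}_{\sigma}$ of the relevant loop algebras, upgraded to a local diffeomorphism by the real-analytic inverse function theorem and to the stated statement on the cell $\mathcal I^{\mathcal U}_e$ by a properness argument.

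The hard part will be the Iwasawa decomposition for the \emph{non-compact} group $G=SO^+(1,n+1)$. Unlike the compact case one cannot hope for surjectivity onto the entire loop group, so one must (i) identify the correct solvable subgroup $S$ for which the loop-algebra splitting above actually holds, and (ii) control precisely the open subset $\mathcal I^{\mathcal U}_e$ onto which the multiplication is a diffeomorphism, equivalently determine when the associated $\bar\partial$-problem (or Grassmannian orbit) is solvable. I expect essentially all of the analytic work to lie here; by contrast, the $\sigma$-equivariance throughout both parts is routine once the untwisted statements are in hand, being obtained by restricting to the fixed-point subgroups of $\sigma$.
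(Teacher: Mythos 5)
The paper does not prove this statement at all: Theorem~\ref{thm-iwasawa} is quoted as background from \cite{DPW} and \cite{DoWa10} (ultimately Pressley--Segal for the Birkhoff part and Kellersch's thesis, as adapted in \cite{DoWa10}, for the non-compact Iwasawa part), so there is no in-paper proof to compare yours against. Judged against those sources, your outline follows the standard route: Riemann--Hilbert factorization with partial indices for Birkhoff, Liouville for uniqueness, $\sigma$-equivariance by uniqueness, and for Iwasawa a Lie-algebra splitting $\Lambda\mathfrak g_{\sigma}\oplus\Lambda^+_{\mathfrak s}\mathfrak g^{\C}_{\sigma}=\Lambda\mathfrak g^{\C}_{\sigma}$ upgraded by the inverse function theorem, with the image only an open cell rather than the whole group. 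That is the right architecture and you correctly locate the difficulty in the non-compactness.

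One conceptual correction: the solvable subgroup $S$ is \emph{not} the $AN$-part of the Iwasawa decomposition of $G$ itself. It arises from a finite-dimensional decomposition of $K^{\C}=SO(1,3,\C)\times SO(n,\C)$ relative to its real form $K=SO^+(1,3)\times SO(n)$: one needs a closed solvable $S\subseteq K^{\C}$ with $\mathfrak k\oplus\mathfrak s=\mathfrak k^{\C}$ as real vector spaces, so that $K\cdot S$ is open in $K^{\C}$ (it is the non-compactness of the factor $SO^+(1,3)$ of $K$, not of $G$, that prevents $K\cdot S$ from being all of $K^{\C}$ and forces the image $\mathcal I^{\mathcal U}_e$ to be a proper open subset). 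This is exactly the constant-term obstruction in your splitting: $\Lambda\mathfrak g_{\sigma}\cap\Lambda^+\mathfrak g^{\C}_{\sigma}=\mathfrak k$, so directness of the sum is equivalent to $\mathfrak k\oplus\mathfrak s=\mathfrak k^{\C}$. Also note the group in this paper is $G=SO^+(1,n+3)$ with $K=SO^+(1,3)\times SO(n)$, not $SO^+(1,n+1)$. With that repaired, your sketch is a fair compression of the cited proofs, though of course the hard analytic content (properness, identification of the cell, density in the Birkhoff case via the tau-function argument) is only signposted, not carried out.
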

The well-known DPW construction for harmonic maps can be stated as follows
\begin{theorem} \label{thm-DPW} \cite{DPW}  Let $\mathbb{D}\subset\mathbb{C}$ be a disk or $\mathbb{C}$  with complex coordinate $z$.
	\begin{enumerate}
		\item Let $f:\mathbb{D}\rightarrow G/K$ denote a harmonic map with an extended frame $F(z,\bar{z},\lambda)\in \Lambda G_{\sigma}$ and $F(0,0,\lambda)=I$. Then there exists a Birkhoff decomposition of $F(z,\bar{z},\lambda)$:
		$F_-(z,\lambda)=F(z,\bar{z},\lambda)  F_+(z,\bar{z},\lambda),$  with $ F_+$ taking values in $\Lambda^+_{S}G^{\mathbb{C}}_{\sigma},
		$
		such that $F_-(z,\lambda):\mathbb{D} \rightarrow\Lambda^-_*G^{\mathbb{C}}_{\sigma}$ is meromorphic. Moreover, the Maurer-Cartan form of $F_-$ is the form\[
		\eta=F_-^{-1} \dd  F_-=\lambda^{-1} \eta_{-1}(z) \dd z,
		\]
		called {\em the normalized potential} of $f$, with  $\eta_{-1} :\mathbb{D} \rightarrow \mathfrak{p}\otimes\C$ independent of $\lambda$.
		
		\item Let $\eta$ be a $\lambda^{-1}\cdot\mathfrak{p}\otimes\C-$valued meromorphic 1-form on $\mathbb{D}$. Let $F_-(z,\lambda)$ be a solution to $F_-^{-1} \dd  F_-=\eta$, $F_-(0,\lambda)=I$. Then there exists an Iwasawa decomposition
		\[
		F_-(0,\lambda)=\tilde{F}(z, \bar{z},\lambda)  \tilde{F}^+(z, \bar{z},\lambda),\]
		with  $\tilde{F}\in\Lambda G_{\sigma},\ \tilde{F}\in\Lambda ^+_{S} G^{\mathbb{C}}_{\sigma}$
		on an open subset $\mathbb{D}_{\mathfrak{I}}$ of $\mathbb{D}$. Moreover,  $\tilde{F}(z,\bar{z},\lambda)$ is an extended frame of some harmonic map from $\mathbb{D}_{\mathfrak{I}}$  to $G/K$ with $\tilde{F}(0,0,\lambda)=I$. All harmonic maps can be obtained in this way, since the above two procedures are inverse to each other if the normalization at some based point is fixed.
	\end{enumerate}\end{theorem}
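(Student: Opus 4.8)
The plan is to obtain both directions from the Birkhoff and Iwasawa decompositions of Theorem~\ref{thm-iwasawa}, combined with bookkeeping of the powers of $\lambda$ in the relevant Maurer--Cartan forms. The one structural fact used repeatedly is that the loop $\alpha_\lambda=\lambda^{-1}\alpha_1'+\alpha_0+\lambda\alpha_1''$ of an extended frame carries only the powers $\lambda^{-1},\lambda^{0},\lambda^{1}$, with $\alpha_1'$ a $(1,0)$-form valued in $\mathfrak p\otimes\C$, $\alpha_1''$ its $(0,1)$ counterpart, and $\alpha_0$ the $\mathfrak k$-part; the twisting by $\sigma$ places the odd powers $\lambda^{\pm1}$ in $\mathfrak p^\C$ and the even power $\lambda^0$ in $\mathfrak k^\C$.

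For part~(1), write the Birkhoff decomposition as $F_-=F F_+$ with $F_-\in\Lambda^-_*G^\C_\sigma$ and $F_+\in\Lambda^+_S G^\C_\sigma$, and set $\eta=F_-^{-1}\dd F_-$. A direct gauge computation yields $\eta=F_+^{-1}\alpha_\lambda F_+ + F_+^{-1}\dd F_+$. The left-hand side, being the Maurer--Cartan form of a loop valued in $\Lambda^-_*$, contains only strictly negative powers of $\lambda$; the right-hand side, built from $\alpha_\lambda$ (powers $\geq-1$) conjugated by $F_+$ (powers $\geq0$), contains only powers $\geq-1$. Matching the two expansions annihilates every term except the one in $\lambda^{-1}$, so $\eta=\lambda^{-1}\eta_{-1}$ with $\eta_{-1}=(F_+|_{\lambda=0})^{-1}\alpha_1'(F_+|_{\lambda=0})$, a $\lambda$-independent, $(1,0)$ and $\mathfrak p\otimes\C$-valued form. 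Since $\eta$ then has no $\dd\bar z$-component, $\partial_{\bar z}F_-=0$; hence $F_-$ is holomorphic in $z$ on the big Birkhoff cell and extends meromorphically across the proper analytic set where the decomposition fails, which is the asserted normalized potential.

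For part~(2), solve the holomorphic linear ODE $\dd F_-=F_-\eta$, $F_-(0,\lambda)=I$; Picard iteration shows the solution is meromorphic in $z$ and takes values in $\Lambda^-_*G^\C_\sigma$. Iwasawa-decompose $F_-=\tilde F\,\tilde F^+$ with $\tilde F\in\Lambda G_\sigma$ and $\tilde F^+\in\Lambda^+_S G^\C_\sigma$ on the open set $\mathbb D_{\mathfrak I}$ where $F_-$ meets the Iwasawa cell. Solving $\eta=(\tilde F^+)^{-1}\tilde\alpha\,\tilde F^{+}+(\tilde F^+)^{-1}\dd\tilde F^{+}$ for $\tilde\alpha=\tilde F^{-1}\dd\tilde F$ gives $\tilde\alpha=\tilde F^+\eta(\tilde F^+)^{-1}-(\dd\tilde F^+)(\tilde F^+)^{-1}$, so $\tilde\alpha$ has powers $\geq-1$. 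But $\tilde F\in\Lambda G_\sigma$ imposes the reality condition $\overline{\xi_{-k}}=\xi_k$ on the Fourier coefficients $\xi_k$ of $\tilde\alpha$, which pairs the power $\lambda^k$ with $\lambda^{-k}$; the absence of powers below $-1$ therefore forces the absence of powers above $1$. Hence $\tilde\alpha=\lambda^{-1}\tilde\alpha_1'+\tilde\alpha_0+\lambda\tilde\alpha_1''$, where $\tilde\alpha_1'=(\tilde F^+|_{\lambda=0})\eta_{-1}(\tilde F^+|_{\lambda=0})^{-1}$ is $(1,0)$ and $\mathfrak p^\C$-valued, $\tilde\alpha_1''$ is its conjugate, and the twisting places $\tilde\alpha_0$ in $\mathfrak k^\C$. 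This is exactly the Maurer--Cartan form of an extended frame; since $\tilde\alpha$ is flat for every $\lambda\in S^1$, the loop-group characterization of harmonicity makes $\tilde F(\cdot,\lambda)\bmod K$ a harmonic map. Uniqueness of the Birkhoff and Iwasawa factorizations, together with the common normalization at $z=0$, shows the two constructions invert one another, so every harmonic map is recovered.

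The step I expect to be most delicate is the reality-and-degree argument in part~(2): establishing that the unitary Iwasawa factor's Maurer--Cartan form truncates to precisely the three powers $\lambda^{-1},\lambda^0,\lambda^1$ and carries the correct $(1,0)/(0,1)$ and $\mathfrak p^\C/\mathfrak k^\C$ types. This is the point where the real form of $G$, the twisting $\sigma$, and the purely $(1,0)$ nature of $\eta$ must be used together, whereas the analogous truncation in part~(1) needs only the holomorphic Birkhoff side. The genuinely analytic input --- that the Birkhoff cell is open dense and the Iwasawa cell open --- is exactly what Theorem~\ref{thm-iwasawa} supplies, and it is what guarantees both the meromorphy of $F_-$ and the existence of the domain $\mathbb D_{\mathfrak I}$.
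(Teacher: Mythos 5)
The paper does not prove this theorem; it is quoted verbatim from \cite{DPW} (Theorem \ref{thm-DPW} is the classical Dorfmeister--Pedit--Wu construction), so there is no internal proof to compare against. Your reconstruction is the standard DPW argument and is essentially correct: the degree count $\lambda^{-1}$ versus $\lambda^{\ge -1}$ in the Birkhoff gauge for part (1), and the combination of $\lambda^{\ge -1}$ with the reality condition $\overline{\xi_{-k}}=\xi_k$ on the unitary Iwasawa factor for part (2), are exactly the mechanisms used in \cite{DPW}. The one place where you assert more than you prove is the meromorphy of $F_-$ across the complement of the big Birkhoff cell in part (1): that this complement is a proper analytic subset and that $F_-$ has at worst poles (rather than essential singularities) there is the genuinely hard analytic content of the theorem, established in \cite{DPW} via the Grassmannian/$\tau$-function model, and cannot be read off from the gauge computation alone; for a citation-level reconstruction this is acceptable, but it should be flagged as input from \cite{DPW} rather than a consequence of holomorphicity on the big cell.
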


	\subsubsection{Normalized potentials of  Willmore surfaces in $\SSS^{n+2}$}
	For simplicity let us restrict to the case for Willmore surfaces \cite{DoWa10,DoWa12,Wang-2}. In this case, $G=SO^+(1,n+3)$, $K=SO^+(1,3)\times SO(n)$, and $\mathfrak{g}=\mathfrak{so}(1,n+3)=\{X\in \mathfrak{g}l(n+4,\mathbb{R})|X^tI_{1,n+3}+I_{1,n+3}X=0\}.$
	The involution $\sigma$ is given by $ \sigma: \ SO^+(1,n+3)  \rightarrow \ SO^+(1,n+3), \sigma(A):=DAD^{-1},$
	with  $D=\hbox{diag}\{-I_{4}, I_{n}\}$. We also have $\mathfrak{g}=\mathfrak{k}\oplus\mathfrak{p},$
	with
	\[ \mathfrak{k}=\left\{\left(
	\begin{array}{cc}
	A_1 &0 \\
	0 & A_2 \\
	\end{array}
	\right)
	|A_1^tI_{1,3}+I_{1,3}A_1 =0, A_2+A_2^t=0\right\}, \  \mathfrak{p}=\left\{\left(
	\begin{array}{cc}
	0 & B_1 \\
	-B_1^tI_{1,3} & 0 \\
	\end{array}
	\right)
	\right\}.
	\]
	Let $G^{\mathbb{C}}=SO^+(1,n+3,\mathbb{C})=  \{X \in SL(n+4,\C) ~|~ X^t I_{1,n+3} X=I_{1,n+3}\}$ with Lie algebra $\mathfrak{so}(1,n+3,\mathbb{C})$. Extend $\sigma$ to an inner involution of $SO^+(1,n+3,\mathbb{C})$  with  fixed point group $K^{\mathbb{C}}=S(O^+(1,3,\mathbb{C})\times O(n,\C))$.
	
	Since Willmore surfaces and their oriented conformal Gauss map are in one to one correspondence \cite{DoWa10, Ejiri1988,Ma}, we will use the normalized potential for a Willmore surface directly.
	For later use, we recall the description of minimal surfaces in space forms in terms of normalized potentials.
	\begin{theorem}\label{thm-minimal}  \cite{Wang-2} (compare also \cite{BW,Helein,Xia-Shen})
		Let $y$ be a Willmore surface in $\SSS^{n+2}$, with its normalized potential being of the form
		\[\eta=\lambda^{-1}\eta_{-1}\dd z=\lambda^{-1}\left(
		\begin{array}{cc}
		0 & \hat{B}_1 \\
		-\hat{B}_1^tI_{1,3} & 0 \\
		\end{array}
		\right)\dd z,\ \hbox{ and }\hat{B}_1^tI_{1,3}\hat B_1=0.\]
		Then
		$y$ is conformally equivalent to some minimal surface in $\R^{n+2}$, $\SSS^{n+2}$ or $\mathbb H^n$ if and only if there exists a non-zero, real, constant vector $\mathbf{v}=(\mathrm{v}_1,\mathrm{v}_2,\mathrm{v}_3,\mathrm{v}_4)^t\in \R^4_1$ such that
		\begin{equation}\label{eq-min}
		\mathbf{v}^tI_{1,3}\hat B_1\equiv0.
		\end{equation}
		Moreover,
		\begin{enumerate}
			\item  the space form is $\R^{n+2}$ if and only if $\langle\mathbf{v},\mathbf{v}\rangle=\mathbf{v}^tI_{1,3}\mathbf{v}=0;$
			\item the space form is $\SSS^{n+2}$ if and only if $\langle\mathbf{v},\mathbf{v}\rangle=\mathbf{v}^tI_{1,3}\mathbf{v}<0;$
			\item the space form is $\mathbb H^{n+2}$ if and only if $\langle\mathbf{v},\mathbf{v}\rangle=\mathbf{v}^tI_{1,3}\mathbf{v}>0.$
		\end{enumerate}
	\end{theorem}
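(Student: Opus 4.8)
The plan is to read the algebraic condition $\mathbf v^tI_{1,3}\hat{B}_1\equiv 0$ as a statement about a single constant vector in the ambient Lorentz space and to connect it to the classical conformal criterion for minimality. Extend $\mathbf v$ by zeros to $\hat{\mathbf v}=(\mathrm v_1,\mathrm v_2,\mathrm v_3,\mathrm v_4,0,\dots,0)^t\in\R^{n+4}_1$. A block computation gives
\[
\eta_{-1}\hat{\mathbf v}=\begin{pmatrix}0&\hat{B}_1\\-\hat{B}_1^tI_{1,3}&0\end{pmatrix}\begin{pmatrix}\mathbf v\\ \mathbf 0\end{pmatrix}=\begin{pmatrix}\mathbf 0\\ -\hat{B}_1^tI_{1,3}\mathbf v\end{pmatrix},
\]
so that $\mathbf v^tI_{1,3}\hat{B}_1\equiv 0$ is \emph{exactly} $\eta_{-1}\hat{\mathbf v}\equiv 0$, while $\langle\hat{\mathbf v},\hat{\mathbf v}\rangle=\mathbf v^tI_{1,3}\mathbf v$. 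On the geometric side I will use the conformal characterization of minimality: $y$ is conformally equivalent to a minimal surface in a space form precisely when the mean curvature sphere $V=\mathrm{Span}\{Y,\mathrm{Re}\,Y_z,\mathrm{Im}\,Y_z,N\}$ contains a fixed nonzero vector $\mathfrak v\in\R^{n+4}_1$ at every point, the space form being $\SSS^{n+2}$, $\hh^{n+2}$ or $\R^{n+2}$ according as $\mathfrak v$ is timelike, spacelike or null; this holds because minimality is equivalent to the mean curvature spheres being totally geodesic in the space form, i.e.\ to their carrier planes $V$ all containing the space-form vector. Thus the theorem reduces to the equivalence ``$\eta_{-1}\hat{\mathbf v}\equiv0$'' $\Leftrightarrow$ ``$\hat{\mathbf v}\perp V^\perp$ everywhere'', with the causal type read off from $\langle\hat{\mathbf v},\hat{\mathbf v}\rangle=\mathbf v^tI_{1,3}\mathbf v$.

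For the direction ``potential $\Rightarrow$ minimal'' I would argue as follows. The meromorphic frame $F_-$ satisfies $\dd F_-=F_-\lambda^{-1}\eta_{-1}\,\dd z$, is holomorphic in $z$, and $F_-(0,\lambda)=I$. Since $\eta_{-1}\hat{\mathbf v}=0$ we get $\partial_z(F_-\hat{\mathbf v})=F_-\lambda^{-1}\eta_{-1}\hat{\mathbf v}=0$ and $\partial_{\bar z}(F_-\hat{\mathbf v})=0$, hence $F_-(z,\lambda)\hat{\mathbf v}\equiv\hat{\mathbf v}$. Now Iwasawa-split $F_-=\tilde F\,\tilde F_+$ as in Theorem \ref{thm-DPW}(2), where $\tilde F\in\Lambda G_\sigma$ is the extended frame and $\tilde F_+\in\Lambda^+_SG^{\C}_\sigma$. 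Then $\mathbf w(\lambda):=\tilde F^{-1}\hat{\mathbf v}=\tilde F_+\hat{\mathbf v}$ is holomorphic in $\lambda$ on the unit disc and smooth up to the boundary, and since $\tilde F_+|_{\lambda=0}\in S\subset K^{\C}$ is block diagonal, $\mathbf w(0)\in\C^4\oplus 0$. For $\lambda\in S^1$ the frame $\tilde F(z,\lambda)$ is real, so $\mathbf w(\lambda)$ is real there; Schwarz reflection across $S^1$ then extends $\mathbf w$ to a holomorphic function on the Riemann sphere, hence $\mathbf w\equiv\mathbf w(0)\in\R^4\oplus 0$. Evaluating at $\lambda=1$ gives $\tilde F(z,1)^{-1}\hat{\mathbf v}\in\R^4\oplus0$, which (using $\tilde F(z,1)\in SO^+(1,n+3)$) says exactly that $\hat{\mathbf v}$ is orthogonal to the last $n$ columns of the geometric frame, i.e.\ $\hat{\mathbf v}\perp V^\perp$. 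By the conformal criterion, $y$ is conformally minimal, of the type dictated by the sign of $\langle\hat{\mathbf v},\hat{\mathbf v}\rangle=\mathbf v^tI_{1,3}\mathbf v$.

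For the converse, minimality produces a constant $\mathfrak v\perp V^\perp$; evaluating at the base point $z=0$, where $\tilde F(0,\lambda)=I$ and hence $V^\perp(0)=0\oplus\R^n$, shows that $\mathfrak v$ has vanishing last $n$ entries, i.e.\ $\mathfrak v=\hat{\mathbf v}$ with $\mathbf v\in\R^4_1$ real. It then remains to run the previous chain backwards and conclude $\eta_{-1}\hat{\mathbf v}=0$. The essential and hardest point is the $\lambda$-propagation: minimality a priori only gives $\hat{\mathbf v}\perp V^\perp$ at $\lambda=1$, whereas $\eta_{-1}\hat{\mathbf v}=0$ is equivalent to $F_-(z,\lambda)\hat{\mathbf v}\equiv\hat{\mathbf v}$, a statement for all $\lambda$. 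Concretely, writing the relation $\hat{B}_1=PB_1Q^{-1}$ between the normalized potential and the geometric coefficient (where $\mathrm{diag}(P,Q)=F_+|_{\lambda=0}$ comes from the Birkhoff splitting $\tilde F=F_-F_+$ producing the normalized potential) and setting $\tilde{\mathbf v}(z):=P(z)\mathbf v(z)$ with $\mathbf v(z)$ the parallel frame coordinates of $\mathfrak v$, one checks readily from $P\in O^+(1,3,\C)$ and $\mathbf v(z)^tI_{1,3}B_1=0$ that $\tilde{\mathbf v}^tI_{1,3}\hat{B}_1\equiv0$, and from the $(0,1)$-part of the Maurer–Cartan form that $\tilde{\mathbf v}_{\bar z}=0$. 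The obstacle is to show $\tilde{\mathbf v}$ is actually constant, equivalently that the residual $\mathfrak k$-gauge term annihilates $\mathbf v$, after which $\tilde{\mathbf v}\equiv\tilde{\mathbf v}(0)=\mathbf v(0)$ is the desired real constant vector. I expect to settle this last step either from the structure equations \eqref{eq-moving}--\eqref{eq-integ} directly, or via the classical fact that the whole associated family of a minimal surface in a space form stays minimal with the same space-form vector, which re-supplies $\hat{\mathbf v}\perp V_\lambda^\perp$ for all $\lambda\in S^1$ and lets the reflection argument of the second paragraph be reversed.
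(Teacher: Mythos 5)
First, a point of reference: the paper offers no proof of Theorem \ref{thm-minimal} at all --- it is quoted from \cite{Wang-2} --- so there is no in-paper argument to compare yours against, and I am judging the proposal on its own terms. Your forward direction is essentially complete and is the standard loop-group argument: the identity $\mathbf{v}^tI_{1,3}\hat B_1\equiv0\Leftrightarrow\eta_{-1}\hat{\mathbf v}=0\Leftrightarrow F_-(z,\lambda)\hat{\mathbf v}\equiv\hat{\mathbf v}$, followed by the Iwasawa splitting, reality of the extended frame on $S^1$ and Schwarz reflection in $\lambda$ to force $\tilde F(z,\lambda)^{-1}\hat{\mathbf v}$ to be a real vector in $\R^4_1\oplus 0$, hence $\hat{\mathbf v}\in V_p$ everywhere, and then the totally-geodesic mean-curvature-sphere criterion with the causal type read off from $\mathbf{v}^tI_{1,3}\mathbf v$. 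You do treat that last geometric criterion as a black box; it is classical, but it is the geometric heart of the theorem and should at least be accompanied by the one-line reason that the $2$-sphere $[V_p]$ is totally geodesic in the space form determined by $\mathfrak v$ precisely when $\mathfrak v\in V_p$.

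The genuine gap is in the converse, and you flag it yourself: you never actually propagate $\hat{\mathbf v}\perp V^\perp$ from $\lambda=1$ to all $\lambda$, and your fallback (``the associated family of a minimal surface stays minimal with the same space-form vector'') is essentially the statement to be proved, so invoking it would be circular. The step is closable with tools you already have on the table. Write $F(z,1)^{-1}\hat{\mathbf v}=(a(z),0)^t$ with the frame \eqref{F}. Differentiating $\hat{\mathbf v}=F(z,1)(a,0)^t$ and separating the $\dd z$ and $\dd\bar z$ components of the $V^\perp$-block of $\alpha=F^{-1}\dd F$ gives \emph{both} $a^tI_{1,3}B_1=0$ and $a^tI_{1,3}\bar B_1=0$; that is, $\alpha_1'$ and $\alpha_1''$ each annihilate $(a,0)^t$, so $\alpha_\lambda(a,0)^t=\alpha_0(a,0)^t=-(\dd a,0)^t$ is independent of $\lambda$ and therefore $F(z,\lambda)(a,0)^t\equiv\hat{\mathbf v}$ for every $\lambda$. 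Consequently $F_-^{-1}\hat{\mathbf v}=F_+^{-1}F^{-1}\hat{\mathbf v}=F_+^{-1}(a,0)^t$ is holomorphic in $\lambda$ on the unit disc, while $F_-\in\Lambda^-_*G^{\C}_\sigma$ makes it holomorphic at $\lambda=\infty$ with value $\hat{\mathbf v}$; Liouville then gives $F_-^{-1}\hat{\mathbf v}\equiv\hat{\mathbf v}$, and differentiating this identity yields $\lambda^{-1}\eta_{-1}\hat{\mathbf v}\,\dd z=0$, i.e.\ \eqref{eq-min}. With that paragraph inserted (and a word on why the conclusion is insensitive to the $K$-gauge relating the geometric frame \eqref{F} to the normalized extended frame of Theorem \ref{thm-DPW}), your proof is complete.
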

	
	Note that in \cite{DIK}, \cite{BHS}, there are some different treatments of minimal surfaces in $\hh^3$ via loop group methods.
	\section{$K^{\C}-$dressing actions on Willmore surfaces}
	
	In this section, we will use the dressing actions on harmonic maps by $K^{\C}$ for Willmore surfaces. We refer to \cite{BQ, Gu-Oh, LM, TU, Uh} for more details on dressing actions and their applications on all kinds of geometric problems. Note that here we use the elements in $K^\C$ instead of the loop group elements.
	
	\subsection{$K^{\C}-$dressing actions}
	\begin{definition}\label{def-dress}
		Let $\mathbf{k}\in K^{\C}$. Let  $f:\mathbb D\rightarrow G/K$  be a harmonic map with an extended frame $F(z,\lambda)$, based at $z_0$ such that $F(z_0,\lambda)=e\in G$. a dressing action by $\mathbf{k}$ on $f$ is defined by the harmonic map \[\mathbf{k}\sharp f:=\hat{F}\mod K,\] where $\hat F:\D\rightarrow \Lambda G_{\sigma}$ is given by the following
		\begin{equation}\label{eq-dressing-k}
		\hat F= \mathbf{k} F(z,\lambda) \hat{V}_+, \hbox{ with } \hat{V}_+\in \Lambda^+ G^{\C}_{\sigma}.\end{equation}
	\end{definition}
	From the definition it is obvious that
	\begin{corollary} $\tilde{f}=\mathbf k\sharp f$ if  $f=\mathbf k^{-1}\sharp \tilde{f}$.
	\end{corollary}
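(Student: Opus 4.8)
The plan is to prove the equivalence by reading the defining relation \eqref{eq-dressing-k} backwards; the entire content is that $K^{\C}$ and $\Lambda^+ G^{\C}_{\sigma}$ are groups, together with the uniqueness of the Iwasawa splitting on the big cell (Theorem \ref{thm-iwasawa}).

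First I would unwind the hypothesis. Assume $\tilde f=\mathbf k\sharp f$, and let $F$ be the extended frame of $f$ based at $z_0$ with $F(z_0,\lambda)=e$. By Definition \ref{def-dress} there is $\hat V_+\in\Lambda^+ G^{\C}_{\sigma}$ such that $\hat F:=\mathbf k\,F\,\hat V_+$ lies in $\Lambda G_{\sigma}$ and frames $\tilde f$, i.e. $\tilde f=\hat F\bmod K$. Since $K^{\C}$ is a group, $\mathbf k^{-1}\in K^{\C}$; since $\Lambda^+ G^{\C}_{\sigma}$ is a group, $\hat V_+^{-1}\in\Lambda^+ G^{\C}_{\sigma}$. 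Solving the displayed relation for $F$ and moving the constant factor across yields the single identity
\[\mathbf k^{-1}\hat F=F\,\hat V_+ .\]
The right-hand side is a product of a factor $F\in\Lambda G_{\sigma}$ and a factor $\hat V_+\in\Lambda^+ G^{\C}_{\sigma}$, so by the uniqueness of the Iwasawa decomposition on the big cell this is precisely the Iwasawa splitting of $\mathbf k^{-1}\hat F$, and its $\Lambda G_{\sigma}$-factor is exactly $F$. Applying Definition \ref{def-dress} to the harmonic map $\tilde f$ with dressing element $\mathbf k^{-1}$ therefore returns $\mathbf k^{-1}\sharp\tilde f=F\bmod K=f$. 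The reverse implication is the same computation with the roles of $f$ and $\tilde f$ interchanged and $\mathbf k$ replaced by $\mathbf k^{-1}$, using $(\mathbf k^{-1})^{-1}=\mathbf k$; this gives the stated equivalence.

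The step I expect to require the most care is the compatibility with the base-point normalization built into Definition \ref{def-dress}. The frame $\hat F=\mathbf k F\hat V_+$ produced by the dressing satisfies $\hat F(z_0,\lambda)=\mathbf k\,\hat V_+(z_0,\lambda)$ rather than $\hat F(z_0,\lambda)=e$, so to apply the definition to $\tilde f$ with element $\mathbf k^{-1}$ one must either establish that the dressing operation is independent of the chosen extended frame of $\tilde f$, or track explicitly the $z$-independent loop relating $\hat F$ to the normalized frame of $\tilde f$ and verify that it is absorbed correctly in the Iwasawa splitting. This frame-independence, i.e. the well-definedness of $\sharp$ as an operation on harmonic maps, is the genuine content; once it is granted, the inversion identity $\mathbf k^{-1}\hat F=F\,\hat V_+$ together with the uniqueness in Theorem \ref{thm-iwasawa} gives the corollary immediately from the group structure of $K^{\C}$ and $\Lambda^+ G^{\C}_{\sigma}$.
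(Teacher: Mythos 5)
Your argument is correct and is exactly the inversion of the defining relation \eqref{eq-dressing-k} that the paper has in mind when it states the corollary as ``obvious from the definition'' (the paper gives no written proof). The identity $\mathbf k^{-1}\hat F=F\,\hat V_+$ with $\hat V_+^{-1}\in\Lambda^+G^{\C}_{\sigma}$ exhibits precisely the factorization required by Definition~\ref{def-dress}, and your remark that the residual ambiguity sits in $K$ and is killed by passing to $\hat F\bmod K$ is the right way to handle the normalization issue.
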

	
	The following result is well-known to the experts. For the reader's convenience, we state it in the following way with a proof.
	\begin{proposition} Let $\eta$ and $\hat\eta$ be the normalized potentials of $f$ and $\mathbf k\sharp f$ given by the extended frames $F$ and $\hat F$ respectively. Then
		\begin{equation}\label{eq-dressing-k-np}
		\hat\eta=\mathbf k \eta \mathbf{k}^{-1}.
		\end{equation}
		
		Conversely, assume that $\eta$ and $\hat\eta$ satisfies \eqref{eq-dressing-k-np}, and their integrations have the same initial conditions, then their corresponding harmonic maps $f$ and $\hat f$ satisfy $\hat f=\mathbf k \sharp f$.
	\end{proposition}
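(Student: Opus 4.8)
The plan is to track what conjugation by the constant element $\mathbf k\in K^\C$ does to the Birkhoff factorization that produces the normalized potential. The guiding observation is that $\mathbf k$, regarded as a constant loop, lies in $\Lambda G^\C_\sigma$ (since $\mathbf k\in K^\C=Fix_\sigma G^\C$ gives $\sigma(\mathbf k)=\mathbf k$), and that conjugation $\gamma\mapsto\mathbf k\gamma\mathbf k^{-1}$ preserves both the $\sigma$-twisting and the splitting of a loop into its powers of $\lambda$. In particular it maps $\Lambda^-_*G^\C_\sigma$ into itself: if $F_-\in\Lambda^-_*G^\C_\sigma$, then $\mathbf k F_-\mathbf k^{-1}$ is again meromorphic, extends holomorphically to $\lambda=\infty$, and takes the value $\mathbf k I\mathbf k^{-1}=I$ there.

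For the forward direction I would start from the Birkhoff decomposition $F_-=FF_+$ defining $\eta=F_-^{-1}\dd F_-$, so that $F=F_-F_+^{-1}$. Substituting into the dressing formula \eqref{eq-dressing-k} gives
\[
\hat F=\mathbf k F\hat V_+=\mathbf k F_-F_+^{-1}\hat V_+=\big(\mathbf k F_-\mathbf k^{-1}\big)\big(\mathbf k F_+^{-1}\hat V_+\big).
\]
By the observation above the first factor lies in $\Lambda^-_*G^\C_\sigma$, while the second lies in $\Lambda^+G^\C_\sigma$ since $\mathbf k$, $F_+^{-1}$ and $\hat V_+$ all do. By uniqueness of the Birkhoff factorization (Theorem \ref{thm-iwasawa}(2)) this is the normalized decomposition of $\hat F$, so $\hat F_-=\mathbf k F_-\mathbf k^{-1}$. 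Because $\mathbf k$ is constant, $\dd(\mathbf k F_-\mathbf k^{-1})=\mathbf k(\dd F_-)\mathbf k^{-1}$, whence
\[
\hat\eta=\hat F_-^{-1}\dd\hat F_-=\mathbf k\,(F_-^{-1}\dd F_-)\,\mathbf k^{-1}=\mathbf k\,\eta\,\mathbf k^{-1}.
\]
I would also record that the right-hand side is a genuine normalized potential: writing $\eta=\lambda^{-1}\eta_{-1}\dd z$ with $\eta_{-1}\in\mathfrak p\otimes\C$, the conjugate equals $\lambda^{-1}\mathrm{Ad}(\mathbf k)\eta_{-1}\,\dd z$, and $\mathrm{Ad}(\mathbf k)$ preserves $\mathfrak p\otimes\C$ because $\mathbf k\in K^\C$ and $[\mathfrak k,\mathfrak p]\subset\mathfrak p$.

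For the converse I would integrate both potentials. Let $F_-,\hat F_-$ solve $F_-^{-1}\dd F_-=\eta$ and $\hat F_-^{-1}\dd\hat F_-=\hat\eta$ with the common initial value $F_-(z_0,\lambda)=\hat F_-(z_0,\lambda)=I$. The loop $\mathbf k F_-\mathbf k^{-1}$ has Maurer--Cartan form $\mathbf k\eta\mathbf k^{-1}=\hat\eta$ and value $I$ at $z_0$, so by uniqueness of solutions to this linear system $\hat F_-=\mathbf k F_-\mathbf k^{-1}$. Applying the Iwasawa decomposition $F_-=\tilde F\,\tilde F_+$ and $\hat F_-=\hat{\tilde F}\,\hat{\tilde F}_+$ (Theorem \ref{thm-DPW}(2)), which produces extended frames of $f$ and $\hat f$ respectively, I would then rewrite
\[
\hat{\tilde F}=\hat F_-\hat{\tilde F}_+^{-1}=\mathbf k F_-\mathbf k^{-1}\hat{\tilde F}_+^{-1}=\mathbf k\,\tilde F\,\big(\tilde F_+\mathbf k^{-1}\hat{\tilde F}_+^{-1}\big).
\]
The factor $\hat V_+:=\tilde F_+\mathbf k^{-1}\hat{\tilde F}_+^{-1}$ lies in $\Lambda^+G^\C_\sigma$, so $\hat{\tilde F}=\mathbf k\,\tilde F\,\hat V_+$ is exactly of the form \eqref{eq-dressing-k}; since $\hat{\tilde F}\in\Lambda G_\sigma$, this exhibits $\hat{\tilde F}$ as the dressed frame, and projecting modulo $K$ yields $\hat f=\mathbf k\sharp f$.

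The main obstacle is loop-group bookkeeping rather than any deep idea: one must check that conjugation by the constant $\mathbf k$ respects the $\sigma$-twisting and, more delicately, that the various ``$+$'' factors land in the precise subgroups ($\Lambda^+_S$, resp.\ $\Lambda^+_{\mathcal C}$) for which the uniqueness clauses of the Birkhoff and Iwasawa decompositions in Theorem \ref{thm-iwasawa} apply. Once the factorizations are known to lie in the big cells, uniqueness does all the work and the algebraic identity $\hat\eta=\mathbf k\eta\mathbf k^{-1}$ drops out.
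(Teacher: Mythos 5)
Your proposal is correct and follows essentially the same route as the paper: substitute $F=F_-F_+^{-1}$ into the dressing formula, regroup as $\hat F=(\mathbf k F_-\mathbf k^{-1})(\mathbf k F_+^{-1}\hat V_+)$, identify $\hat F_-=\mathbf k F_-\mathbf k^{-1}$ from the normalized Birkhoff factorization, and reverse the computation for the converse via Iwasawa. Your added checks (that conjugation by the constant $\mathbf k$ preserves the twisting, $\Lambda^-_*$, and $\mathfrak p\otimes\C$, and the explicit appeal to uniqueness of the factorization) only make explicit what the paper leaves implicit.
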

	
	\begin{proof} By Theorem \ref{thm-DPW}, we have
		\[F_-=F F_+ \hbox{ and } \eta=F_-^{-1}\dd F_-,~ \hat F_-=\hat F\hat F_+ \hbox{ and  }\hat\eta=\hat F_-^{-1}\dd\hat F_-.\]
		From \eqref{eq-dressing-k}, we also have $ \hat F= \mathbf{k} F(z,\lambda) \hat{V}_+$. So
		\[\hat F_-=\mathbf{k} F(z,\lambda) \hat{V}_+ \hat F_+=\mathbf{k} F_- F_+^{-1}\hat{V}_+=\mathbf{k} F_-\mathbf{k}^{-1} \mathbf k F_+^{-1}\hat{V}_+.\]
		Together with the assumption of having same initial conditions, we obtain that $\hat F_-=\mathbf{k} F_-\mathbf{k}^{-1} $, and \eqref{eq-dressing-k-np} follows directly.
		
		Concerning the converse part, first by assumptions we have $\hat F_-=\mathbf{k} F_-\mathbf{k}^{-1} $. So
		\[\hat F=\hat F_- \hat F_+^{-1}= \mathbf{k} F_-\mathbf{k}^{-1} \hat F_+^{-1}=\mathbf{k} F \hat V_+\]
		with $\hat V_+=F_+^{-1}\mathbf{k}^{-1} \hat F_+^{-1},$ that is, $\hat f=\mathbf k\sharp f.$
	\end{proof}
	
	Applying to Willmore surfaces, we obtain
	\begin{proposition}\label{prop-dr}
		Let $T=T_1\times T_2\in  SO(1,3,\C)\times SO(n,\C)$. Let $f$ be a harmonic map with normalized potential
		\begin{equation}\label{eq-w-np}
		\eta=\lambda^{-1}\eta_{-1}dz=\lambda^{-1}\left(
		\begin{array}{cc}
		0 & \hat{B}_1 \\
		-\hat{B}_1^tI_{1,3} & 0 \\
		\end{array}
		\right)dz, \hbox{ with } \hat{B}_1^tI_{1,3}\hat B_1=0.
		\end{equation}
		Then the normalized potential $\eta_{T}$ of $T\sharp f$ has the form
		\begin{equation}\label{eq-action}
		\eta_{T}=T\eta T^{-1}=\lambda^{-1}\left(
		\begin{array}{cc}
		0 & T_1\hat{B}_1T_2^t \\
		-T_2\hat{B}_1^tI_{1,3}T_1^t & 0 \\
		\end{array}
		\right)dz.
		\end{equation}
	\end{proposition}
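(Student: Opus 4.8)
The plan is to reduce everything to the conjugation formula \eqref{eq-dressing-k-np} established in the preceding proposition. First I would check that $T=T_1\times T_2$ genuinely lies in $K^{\mathbb{C}}=S(O^+(1,3,\mathbb{C})\times O(n,\mathbb{C}))$: writing it as the constant, block-diagonal loop $T=\mathrm{diag}(T_1,T_2)$ with $T_1\in SO(1,3,\mathbb{C})$ and $T_2\in SO(n,\mathbb{C})$, it is $\lambda$-independent and hence a constant element of $K^{\mathbb{C}}$, so the preceding proposition applies verbatim with $\mathbf{k}=T$. This immediately yields $\eta_T=T\eta T^{-1}$, so the only remaining task is the explicit $2\times 2$ block computation.

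For that computation the key inputs are the two inverses. Orthogonality of $T_2$ gives $T_2^{-1}=T_2^t$, while the Lorentzian relation $T_1^tI_{1,3}T_1=I_{1,3}$ gives $T_1^{-1}=I_{1,3}T_1^tI_{1,3}$ (equivalently $I_{1,3}T_1^{-1}=T_1^tI_{1,3}$). Substituting $\eta=\lambda^{-1}\eta_{-1}\,dz$ from \eqref{eq-w-np} and multiplying out, the upper-right block of $T\eta_{-1}T^{-1}$ is $T_1\hat{B}_1T_2^{-1}=T_1\hat{B}_1T_2^t$, and the lower-left block is $-T_2\hat{B}_1^tI_{1,3}T_1^{-1}$, which by the identity $I_{1,3}T_1^{-1}=T_1^tI_{1,3}$ equals $-T_2\hat{B}_1^tT_1^tI_{1,3}$. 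This is precisely $-(T_1\hat{B}_1T_2^t)^tI_{1,3}$, so $\eta_T$ has exactly the block-antisymmetric shape asserted in \eqref{eq-action}, as it must, since conjugation by $K^{\mathbb{C}}$ preserves the Cartan piece $\mathfrak{p}$.

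Finally, to confirm that $T\sharp f$ is again a Willmore surface of the same type, I would verify that the new off-diagonal block inherits the isotropy condition: using $T_1^tI_{1,3}T_1=I_{1,3}$ together with the hypothesis $\hat{B}_1^tI_{1,3}\hat{B}_1=0$, one computes $(T_1\hat{B}_1T_2^t)^tI_{1,3}(T_1\hat{B}_1T_2^t)=T_2\hat{B}_1^tI_{1,3}\hat{B}_1T_2^t=0$. There is really no genuine obstacle here; the single point demanding care is the \emph{Lorentzian} inverse of $T_1$, namely that $T_1^{-1}=I_{1,3}T_1^tI_{1,3}$ rather than $T_1^t$, which controls where the factor $I_{1,3}$ sits in the lower-left block. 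Keeping track of this is what makes the two equivalent expressions $-T_2\hat{B}_1^tI_{1,3}T_1^{-1}$ and $-T_2\hat{B}_1^tT_1^tI_{1,3}$ agree and guarantees consistency of \eqref{eq-action}.
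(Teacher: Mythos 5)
Your proposal is correct and follows essentially the same route as the paper, which derives \eqref{eq-action} as an immediate consequence of the preceding conjugation formula $\hat\eta=\mathbf{k}\eta\mathbf{k}^{-1}$ applied to the constant block-diagonal element $T=\mathrm{diag}(T_1,T_2)\in K^{\mathbb{C}}$, followed by the same block computation using $T_2^{-1}=T_2^t$ and $T_1^{-1}=I_{1,3}T_1^tI_{1,3}$. Your careful bookkeeping of the Lorentzian inverse in fact yields the lower-left block in the form $-T_2\hat{B}_1^tT_1^tI_{1,3}=-(T_1\hat{B}_1T_2^t)^tI_{1,3}$, which is the placement of $I_{1,3}$ consistent with the $\mathfrak{p}$-structure and is what the displayed formula \eqref{eq-action} is meant to express.
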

	\ \\
	
	We define  the space of the conformal Gauss maps of minimal surfaces in three space forms
	\begin{equation}\label{eq-minimal-set-r}
	\mathcal{M}_0:=\{f| f \hbox{ is the conformal Gauss map of a minimal surface in } \R^{n+2}\},
	\end{equation}
	\begin{equation}\label{eq-minimal-set-s}
	\mathcal{M}_1:=\{f| f \hbox{ is the conformal Gauss map of a minimal surface in } \SSS^{n+2}\},
	\end{equation}
	\begin{equation}\label{eq-minimal-set-h}
	\mathcal{M}_{-1}:=\{f| f \hbox{ is the conformal Gauss map of a minimal surface in } \hh^{n+2}\}.
	\end{equation}
	We also define the space $\mathcal{M}_L$ and its subset $\widetilde{\mathcal{M}}_0$ as below
	\begin{equation}\label{eq-np-Li}
	\mathcal{M}_{L}:=\left\{f| \hbox{The normalized potential $\eta$ of $f$ satisfies } \mathbf{v}^tI_{1,3}\hat B_1=0  \hbox{ for some }\mathbf v\in\C^4_1\backslash\{0\}\right\},
	\end{equation}\begin{equation}\label{eq-np-Li-s}
	\widetilde{\mathcal{M}}_0:=\left\{f| f\in \mathcal{M}_{L}, \hbox{ with  $\mathbf v$ satisfying }  \mathbf{v}^tI_{1,3}\mathbf v=0\right\}.
	\end{equation}
	Note that $\mathcal{M}_0\subsetneqq \widetilde{\mathcal{M}}_0$ (See \cite{Wang-2} for example). In \cite{Wang-2}, it is shown that up to a conjugation, for any $f\in\widetilde{\mathcal{M}}_0$, the normalized potential of $f$ has the form ((1) of \cite{Wang-2})
	\[\hat B_1=
	\left(
	\begin{array}{ccccccc}
	\hat f_{11} & \hat f_{12} &  \cdots &  \hat{f}_{1,n} \\
	-\hat f_{11} & -\hat f_{12} &  \cdots &  -\hat{f}_{1,n} \\
	\hat f_{13} & \hat f_{32} &  \cdots &  \hat{f}_{3n} \\
	i\hat f_{13} & i\hat f_{32} &  \cdots &  i\hat{f}_{3n} \\
	\end{array}
	\right)\]
	with $\hat f_{ij}$ being meromorphic functions.

	Set $K^{\C}=SO(1,3,\C)\times SO(n,\C)$ and we define
	\begin{equation}\label{eq-KC-M}
	\begin{split}
	K^{\C}\sharp \mathcal{M}_{j}&:=\{T\sharp f|T\in K^{\C}, f\in \mathcal{M}_j\},\ j=0,1,-1;\\
	K^{\C}\sharp \widetilde{\mathcal{M}}_{0}&:=\{T\sharp f|T\in K^{\C}, f\in \widetilde{\mathcal{M}}_0\};\\
	K^{\C}\sharp \mathcal{M}_{L}&:=\{T\sharp f|T\in K^{\C}, f\in \mathcal{M}_L\}.\\
	\end{split}
	\end{equation}

	\subsection{$K^{\C}-$dressing actions preserve minimal surfaces in $\R^{n+2}$}
	
	\begin{theorem} \label{th-dress-rn} Let $f$ be the oriented conformal Gauss map of a minimal surface in $\R^{n+2}$ and $T\in K^{\C}$. Then $T\sharp f$ is also the oriented conformal Gauss map of a minimal surface in $\R^{n+2}$, i.e.,
		\[K^{\C}\sharp\mathcal{M}_0={\mathcal{M}}_0.\]
	\end{theorem}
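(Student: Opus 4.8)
The plan is to reduce the statement to a linear-algebra fact about totally isotropic planes in the Lorentzian space $\R^4_1$. By Theorem \ref{thm-minimal}, the hypothesis $f\in\mathcal{M}_0$ records two facts about its normalized potential $\eta=\lambda^{-1}\left(\begin{smallmatrix}0 & \hat B_1\\ -\hat B_1^tI_{1,3} & 0\end{smallmatrix}\right)\dd z$: first, $\hat B_1^tI_{1,3}\hat B_1=0$, so that the column space $U:=\mathrm{Col}_{\C}\hat B_1\subset\C^4$ is totally isotropic (hence $\dim_{\C} U\le 2$); and second, there is a nonzero \emph{real} lightlike vector $\mathbf v$ with $\mathbf v^tI_{1,3}\hat B_1=0$, i.e.\ $\mathbf v\perp U$. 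By Proposition \ref{prop-dr}, $T\sharp f$ has normalized potential with block $T_1\hat B_1 T_2^t$, whose column space is $T_1U$ (as $T_2$ is invertible) and which again satisfies the isotropy condition, since $(T_1\hat B_1T_2^t)^tI_{1,3}(T_1\hat B_1T_2^t)=T_2\hat B_1^tI_{1,3}\hat B_1T_2^t=0$. By Theorem \ref{thm-minimal} it therefore suffices to produce a nonzero real lightlike vector $\mathbf w$ with $\mathbf w^tI_{1,3}(T_1\hat B_1T_2^t)=0$, equivalently $\mathbf w\perp T_1U$.

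First I would enlarge $U$ to a maximal totally isotropic plane $L\supseteq U$ of complex dimension $2$ (possible by Witt's theorem; take $L=U$ if $\dim_{\C}U=2$). Since $U\subseteq L$ and $L$ is isotropic, $L\subseteq L^\perp\subseteq U^\perp$, and applying the Lorentz-orthogonal map $T_1$ (so $T_1^tI_{1,3}T_1=I_{1,3}$) gives $T_1L\subseteq(T_1U)^\perp$, with $T_1L$ again a maximal isotropic plane. Hence every vector of $T_1L$ annihilates $T_1\hat B_1T_2^t$, and the whole problem reduces to the claim: every complex $2$-dimensional totally isotropic subspace $W$ of $(\C^4,I_{1,3})$ contains a nonzero real (necessarily lightlike) vector, i.e.\ $W\cap\bar W\neq\{0\}$.

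The key step, and the main obstacle, is precisely this claim, where the Lorentzian signature $(1,3)$ is decisive. I would argue it through the two rulings of maximal isotropic planes: for a nondegenerate complex form on $\C^4$ these fall into two families, and two such planes lie in the same family if and only if the dimension of their intersection is even. For signature $(1,3)$ the Hodge star on $2$-vectors satisfies $*^2=-\mathrm{id}$, so its eigenvalues are $\pm i$; since $*$ is real, complex conjugation interchanges the $\pm i$-eigenspaces and hence swaps the two rulings. Therefore $W$ and $\bar W$ lie in different families, $\dim_{\C}(W\cap\bar W)$ is odd and thus equals $1$, and this conjugation-invariant line is spanned by a nonzero real vector $\mathbf w$, which is lightlike because it lies in the isotropic plane $W$. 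It is exactly this closed, algebraic nature of the null condition that singles out $\R^{n+2}$: the timelike and spacelike annihilators relevant to $\SSS^{n+2}$ and $\hh^{n+2}$ are governed by open conditions, are not preserved by the $K^{\C}$-action, and this is what later permits the deformations.

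Finally, taking $\mathbf w$ to span $T_1L\cap\overline{T_1L}$, we have $\mathbf w\in T_1L\subseteq(T_1U)^\perp$, so $\mathbf w^tI_{1,3}(T_1\hat B_1T_2^t)=0$ with $\mathbf w$ real and lightlike; Theorem \ref{thm-minimal} then identifies $T\sharp f$ as the conformal Gauss map of a minimal surface in $\R^{n+2}$, giving $K^{\C}\sharp\mathcal M_0\subseteq\mathcal M_0$. Since $I\in K^{\C}$ acts trivially, the reverse inclusion $\mathcal M_0\subseteq K^{\C}\sharp\mathcal M_0$ is immediate, whence $K^{\C}\sharp\mathcal M_0=\mathcal M_0$. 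I expect the isotropic-plane claim to be the only delicate point; everything else is a formal consequence of Theorem \ref{thm-minimal} and Proposition \ref{prop-dr}.
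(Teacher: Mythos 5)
Your approach is genuinely different from the paper's and, at its core, correct and rather more conceptual. The paper proves $K^{\C}\sharp\mathcal{M}_0\subseteq\mathcal{M}_0$ by contradiction: it invokes the classification from \cite{Wang-2} of how an element of $\widetilde{\mathcal{M}}_0$ can fail to lie in $\mathcal{M}_0$ (either $\tilde B_1$ has rank $2$, or the harmonic map reduces to $SO(n+2)/SO(2)\times SO(n)$ or $SO(1,1)/SO(1,1)\times SO(n)$), and shows each failure mode would propagate back to $f$ through $T^{-1}$. You instead argue forward, and your key claim --- that every maximal totally isotropic plane $W\subset(\C^4,I_{1,3})$ satisfies $W\cap\bar W\neq\{0\}$, because conjugation swaps the two rulings when $*^2=-\mathrm{id}$ --- is true and is the ``real reason'' the lightlike case is closed under the $K^\C$-action. (An equivalent one-line argument: if $W\cap\bar W=0$ then $\C^4=W\oplus_h\bar W$ for the Hermitian form $h(x,y)=x^tI_{1,3}\bar y$, forcing the signature of $h$ to be even in each entry, contradicting $(1,3)$.)

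There is, however, a concrete gap at your first step. The condition $\hat B_1^tI_{1,3}\hat B_1=0$ is an identity of meromorphic functions evaluated pointwise: it says the span $U(z)$ of the columns at each fixed $z$ is isotropic. It does \emph{not} by itself say that $U:=\mathrm{span}\{\hhh_j(z):\ \hbox{all }j,\ \hbox{all }z\}$ is isotropic --- and it is this global span you must annihilate with a single constant $\mathbf w$, since otherwise your maximal isotropic plane $L$ and hence $\mathbf w$ would depend on $z$. (For instance $\hhh(z)=a+zb+z^2c$ with $\langle a,a\rangle=\langle a,b\rangle=\langle b,c\rangle=\langle c,c\rangle=0$ and $\langle b,b\rangle=-2\langle a,c\rangle\neq0$ is pointwise isotropic with non-isotropic global span.) The gap is fixable precisely because you also have the real lightlike $\mathbf v$ with $U\subseteq\mathbf v^\perp$: the isotropic vectors of $\mathbf v^\perp$ form exactly two maximal isotropic planes $P$ and $\bar P$ meeting in $\C\mathbf v$, each $U(z)$ lies in one of them, and the identity theorem for the meromorphic coefficients forces all of $U$ into a single one; this is also exactly the normal form for $\widetilde{\mathcal{M}}_0$ quoted from \cite{Wang-2} just before \eqref{eq-KC-M}. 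Once $U\subseteq P$ you do not even need Witt's theorem: take $L=T_1P$. Separately, note that the paper's own proof spends most of its effort excluding the degenerate possibilities (rank jumping to $2$, or reduction of the harmonic map), which indicates that Theorem \ref{thm-minimal} is not being read as a bare if-and-only-if for membership in $\mathcal{M}_0$; to match that level of rigor you should add the (easy) observation that $T_1$, being invertible and $I_{1,3}$-orthogonal, preserves both the pointwise rank of $\hat B_1$ and $\dim_{\C}U$, so $T\sharp f$ can neither become rank $2$ nor degenerate to a reducible harmonic map.
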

	
	\begin{proof}
		To show that $  K^{\C}\sharp\mathcal{M}_0=\mathcal{M}_0$, we need to show that $K^{\C}\sharp\mathcal{M}_0\setminus \mathcal{M}_0=\emptyset$. Otherwise assume that $\tilde f\in K^{\C}\sharp\mathcal{M}_0\setminus \mathcal{M}_0$. Then there exists $f\in\mathcal{M}_0$ and $T\in K^{\C}$  such that
		$T\sharp f=\tilde f$. So $T^{-1}\sharp \tilde f=f$. Assume that $T=\hbox{diag}(T_1,T_2)$ and the normalized potential of $f$ is given by $\hat B_1$.
		Then the normalized potential of $\tilde f$ is given by $T_1\hat B_1 T_2^{-1}$ by \eqref{eq-action}.
		
		Since $\tilde f\not\in\mathcal{M}_0$, by \cite{Wang-2} we have that either   $T_1\hat B_1 T_2^{-1}$ has rank $2$ or $\tilde f$ reduces to  a map into $SO(n+2)/SO(2)\times SO(n)$ or $SO(1,1)/SO(1,1)\times SO(n)$. If  $\tilde B_1$ has maximal rank 2, then $\hat B_1$ also has maximal rank 2, which is not possible since $\hat B_1$ has maximal rank 1  due to the assumption $f\in\mathcal{M}_0$. If $\tilde f$ reduces to  a map into $SO(n+2)/SO(2)\times SO(n)$ or $SO(1,1)/SO(1,1)\times SO(n)$, then we can assume w.l.g. that
		\[T_1\hat B_1 T_2^{-1}=\left(
		\begin{array}{ccccccc}
		\tilde f_{11} & \tilde f_{12} &  \cdots &  \tilde{f}_{1,n} \\
		-\tilde f_{11} & -\tilde f_{12} &  \cdots &  -\tilde{f}_{1,n} \\
		0 & 0 &  \cdots &  0 \\
		0 & 0 &  \cdots &  0 \\
		\end{array}
		\right) \hbox{ or }\left(
		\begin{array}{ccccccc}
		0 & 0 &  \cdots &  0 \\
		0 & 0 &  \cdots &  0 \\
		\tilde f_{13} & \tilde f_{32} &  \cdots &  \tilde{f}_{3n} \\
		i\tilde f_{13} & i\tilde f_{32} &  \cdots &  i\tilde{f}_{3n} \\
		\end{array}
		\right).\]
		So
		\[\hat B_1=T_1^{-1}\left(
		\begin{array}{ccccccc}
		\tilde f_{11} & \tilde f_{12} &  \cdots &  \tilde{f}_{1,n} \\
		-\tilde f_{11} & -\tilde f_{12} &  \cdots &  -\tilde{f}_{1,n} \\
		0 & 0 &  \cdots &  0 \\
		0 & 0 &  \cdots &  0 \\
		\end{array}
		\right)T_2 \hbox{ or }\hat B_1=T_1^{-1}\left(
		\begin{array}{ccccccc}
		0 & 0 &  \cdots &  0 \\
		0 & 0 &  \cdots &  0 \\
		\tilde f_{13} & \tilde f_{32} &  \cdots &  \tilde{f}_{3n} \\
		i\tilde f_{13} & i\tilde f_{32} &  \cdots &  i\tilde{f}_{3n} \\
		\end{array}
		\right)T_2.\]
		Consider in the first case the constant vector
		\[\mathbf v^*=T_1^{-1}\left(
		\begin{array}{ccccccc}
		1  \\
		-1\\
		0\\
		0\\
		\end{array}
		\right)\in\C^4_1.\] Apparently  the action of $T_2$ does not change its form. So we can assume without lose of generality $T_2=I$.
		Note by construction $\mathbf{v}^*$ stays an isotropic vector, i.e., $(\mathbf{v}^*)^tI_{1,3}\mathbf{v}^*=0$. So there exists some real $\check T_1\in SO(1,3)$ such that
		\[\check T_1T_1^{-1}\left(
		\begin{array}{ccccccc}
		1  \\
		-1\\
		0\\
		0\\
		\end{array}
		\right)=a\left(
		\begin{array}{ccccccc}
		1  \\
		-1\\
		0\\
		0\\
		\end{array}
		\right)\hbox{ or }a\left(
		\begin{array}{ccccccc}
		0  \\
		0\\
		1\\
		i\\
		\end{array}
		\right),\]
		depending on  whether $Re \mathbf{v}^*$ and $Im \mathbf{v}^*$ are linear dependent or not.
		Here $a$ is a constant number. So up to an action $\check T=\hbox{diag}(\check T_1,I_n)$, $f$ reduces to a harmonic map into $SO(1,n+1)/SO(1,1)\times SO(n)$, which is contradicted to the assumption $f\in\mathcal {M}_0$, since harmonic maps into $SO(1,n+1)/SO(1,1)\times SO(n)$ does not produce minimal surfaces in $\R^{n+2}$. Similarly, the second case produces a harmonic map into $SO(n+2)/SO(2)\times SO(n)$,
		which also does not give minimal surfaces in $\R^{n+2}$. Hence $K^{\C}\sharp\mathcal{M}_0= \mathcal{M}_0.$
	\end{proof}
	\begin{remark} In \cite{LM}, it is shown that the simple dressing actions preserve minimal surfaces in $\R^4$. Our result here shows that  $K^{\C}$ dressing actions   preserve minimal surfaces in $\R^{n+2}$.
	\end{remark}

	\subsection{$K^{\C}-$dressing actions on minimal surfaces in $\SSS^{n+2}$ and $\hh^{n+2}$}
	\begin{theorem} \label{th-dress-sn} \
		
		\begin{enumerate}
			\item Let $f\in \mathcal{M}_1$. Then $T\sharp f\in {\mathcal{M}}_L\setminus\widetilde{\mathcal{M}}_0$ for any $T\in K^{\C}$.
			Conversely, let $\tilde f\in {\mathcal{M}}_L\setminus\widetilde{\mathcal{M}}_0$. Then there exists some $\tilde T\in K^{\C}$ such that $\tilde T\sharp \tilde f\in\mathcal{M}_1$. That is
			\begin{equation}\label{eq-K-M1}
			K ^{\C}\sharp\mathcal{M}_1={\mathcal{M}}_L\setminus\widetilde{\mathcal{M}}_0.
			\end{equation}
			\item Let $f\in \mathcal{M}_{-1}$. Then $T\sharp f\in {\mathcal{M}}_L\setminus\widetilde{\mathcal{M}}_0$ for any $T\in K^{\C}$.
			Conversely, let $\tilde f\in {\mathcal{M}}_L\setminus\widetilde{\mathcal{M}}_0$. Then there exists some $\tilde T\in K^{\C}$ such that $\tilde T\sharp \tilde f\in\mathcal{M}_{-1}$. That is
			\begin{equation}\label{eq-K-M-1}K^{\C}\sharp\mathcal{M}_{-1}={\mathcal{M}}_L\setminus\widetilde{\mathcal{M}}_0.\
			\end{equation}
			\item  In particular, for any $f\in \mathcal{M}_{1}$, there exists some $T\in K^{\C}$ such that $\hat f=T\sharp f\in \mathcal{M}_{-1}$.
			For any $f\in\mathcal{M}_{-1}$, there exists  $T\in K^{\C}$ such that $\hat f=T\sharp f\in \mathcal{M}_{1}$.  That is,
			\[\mathcal{M}_{-1}\subseteq K^{\C}\sharp \mathcal{M}_{1},\ \mathcal{M}_{1}\subseteq K^{\C}\sharp \mathcal{M}_{-1}.\]\end{enumerate}
	\end{theorem}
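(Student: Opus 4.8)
The plan is to attach to every $f$ whose normalized potential has the block form of Proposition~\ref{prop-dr} the constant complex subspace
\[
\mathcal W_f:=\mathrm{span}_{\C}\bigl\{\text{columns of }\hat B_1(z)\ :\ z\in\D\bigr\}\subseteq\C^4_1,
\]
spanned by the columns of $\hat B_1(z)$ over all $z$, together with its Lorentz--orthogonal complement $\mathcal A_f:=\mathcal W_f^{\perp}$, the complement taken for $\langle u,v\rangle=u^tI_{1,3}v$. A \emph{constant} vector $\mathbf v$ obeys $\mathbf v^tI_{1,3}\hat B_1\equiv0$ exactly when $\mathbf v\in\mathcal A_f$; hence $f\in\mathcal M_L$ iff $\mathcal A_f\neq\{0\}$, and $f\in\widetilde{\mathcal M}_0$ iff $\mathcal A_f$ contains a nonzero null vector. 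The whole argument rests on how $\mathcal A_f$ transforms under dressing and on the causal type of $\mathcal A_f$ for minimal surfaces.

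For the forward inclusions I would use Proposition~\ref{prop-dr}: under $T=T_1\times T_2\in K^\C$ the potential transforms by $\hat B_1\mapsto T_1\hat B_1T_2^t$, so, as $T_2$ is invertible, $\mathcal W_{T\sharp f}=T_1\mathcal W_f$, and therefore $\mathcal A_{T\sharp f}=T_1\mathcal A_f$ because $T_1\in SO(1,3,\C)$ preserves $\langle\cdot,\cdot\rangle$. Since $T_1$ is a linear isometry, it carries $\mathcal A_f$ isometrically onto $\mathcal A_{T\sharp f}$; in particular the presence or absence of a nonzero null vector in the common kernel is a dressing invariant, so both $\widetilde{\mathcal M}_0$ and $\mathcal M_L\setminus\widetilde{\mathcal M}_0$ are preserved by $K^\C$. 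If $f\in\mathcal M_1$, then Theorem~\ref{thm-minimal} gives a real timelike $\mathbf v\in\mathcal A_f$, and for a full minimal immersion one checks $\dim_{\C}\mathcal W_f=3$, whence $\mathcal A_f=\C\mathbf v$; every nonzero $c\mathbf v$ has $\langle c\mathbf v,c\mathbf v\rangle=c^2\langle\mathbf v,\mathbf v\rangle\neq0$, so $\mathcal A_f$ carries no null vector, giving $f\in\mathcal M_L\setminus\widetilde{\mathcal M}_0$ and hence $T\sharp f\in\mathcal M_L\setminus\widetilde{\mathcal M}_0$ for all $T$. The case $f\in\mathcal M_{-1}$ is identical with $\mathbf v$ spacelike.

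For the converse let $\tilde f\in\mathcal M_L\setminus\widetilde{\mathcal M}_0$, so $\mathcal A_{\tilde f}\neq\{0\}$ contains no null vector. Since over $\C$ every quadratic form in two or more variables has a nontrivial zero, this forces $\dim_{\C}\mathcal A_{\tilde f}=1$, say $\mathcal A_{\tilde f}=\C\tilde{\mathbf v}$ with $\mu:=\langle\tilde{\mathbf v},\tilde{\mathbf v}\rangle\neq0$. Rescaling $\tilde{\mathbf v}$ by $c\in\C^\ast$ multiplies $\mu$ by $c^2$, so I may normalize $\mu=-1$; by transitivity of $SO(1,3,\C)\cong SO(4,\C)$ on the quadric $\{v:\langle v,v\rangle=-1\}$ there is $\tilde T_1\in SO(1,3,\C)$ with $\tilde T_1\tilde{\mathbf v}=(1,0,0,0)^t$. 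Setting $\tilde T=\tilde T_1\times I_n$ gives $\mathcal A_{\tilde T\sharp\tilde f}=\C(1,0,0,0)^t$, a real timelike line, and Theorem~\ref{thm-minimal} then identifies $\tilde T\sharp\tilde f$ as the conformal Gauss map of a minimal surface in $\SSS^{n+2}$, proving \eqref{eq-K-M1}. Normalizing instead $\mu=+1$ and sending $\tilde{\mathbf v}$ to $(0,1,0,0)^t$ yields a real spacelike annihilator, proving \eqref{eq-K-M-1}. Part~(3) is then formal: since $\mathcal M_{\pm1}\subseteq\mathcal M_L\setminus\widetilde{\mathcal M}_0=K^\C\sharp\mathcal M_{\mp1}$ by \eqref{eq-K-M1}--\eqref{eq-K-M-1}, the corollary following Definition~\ref{def-dress} converts each inclusion into the statement that any $f\in\mathcal M_1$ is dressed to $\mathcal M_{-1}$ and conversely.

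I expect the main obstacle to be the converse. One must pin down $\mathcal A_{\tilde f}$ precisely enough to know it is a single non-null line (the complex quadric dichotomy does exactly this) and then realize that abstract complex line as a \emph{genuine real} causal direction. The decisive observation is that complex rescaling reaches any nonzero value of $\langle\tilde{\mathbf v},\tilde{\mathbf v}\rangle$, in particular any negative or positive real number, after which transitivity of $SO(1,3,\C)$ converts the length datum into an explicit real timelike or spacelike vector; checking that $\tilde T_1\tilde{\mathbf v}$ is actually real, and not merely of the correct length, is what makes Theorem~\ref{thm-minimal} applicable and cleanly separates $\mathcal M_1$ from $\mathcal M_{-1}$ inside the common image $\mathcal M_L\setminus\widetilde{\mathcal M}_0$.
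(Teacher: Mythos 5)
Your proposal is correct and follows essentially the same route as the paper: the forward direction transports the real timelike (resp.\ spacelike) annihilating vector by $T_1$, which preserves its Lorentzian length, and the converse normalizes a non-null complex annihilating vector to length $-1$ (resp.\ $+1$) and moves it to a real vector by an element of $SO(1,3,\C)$ so that Theorem \ref{thm-minimal} applies, with part (3) a formal consequence. Your packaging via the annihilator line $\mathcal A_f$ and the complex-quadric dichotomy merely makes explicit the one-dimensionality that the paper's argument uses implicitly when it concludes $T\sharp f\notin\widetilde{\mathcal M}_0$ from a single non-null vector.
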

	\begin{proof} (1) Let $f\in\mathcal{M}_{1}$ with normalized potential given by $\hat B_1$. Then by Theorem \ref{thm-minimal}, there exists some $\mathbf v\in\R^4_1$ such that
		\[\mathbf{v}^tI_{1,3}\hat B_1\equiv0,\ \langle\mathbf{v},\mathbf{v}\rangle=\mathbf{v}^tI_{1,3}\mathbf{v}<0.\] So for any $T\in K^\C$,
		the normalized potential of $T\sharp f$ is given by
		$\tilde B_1=T_1\hat B_1T_2^{-1}$, where $T=\hbox{diag}(T_1,T_2)$.  So
		$\tilde{\mathbf v}=T_1\mathbf v$ is the vector such that $\tilde{\mathbf v}^tI_{1,3}T_1\hat B_1T_2^{-1}=0$. So $T\sharp f\in \mathcal{M}_L$.
		Since $\tilde{\mathbf v}^tI_{1,3}\tilde{\mathbf v}={\mathbf v}^tI_{1,3}{\mathbf v}<0$, $T\sharp f \in {\mathcal{M}}_L\setminus\widetilde{\mathcal{M}}_0$.

		Now let $\tilde f\in {\mathcal{M}}_L\setminus\widetilde{\mathcal{M}}_0$ with normalized potential given by  $\tilde B_1$. Then there exists some $\mathbf v$ such that
		$\mathbf{v}^tI_{1,3}\tilde B_1\equiv0,\ \langle\mathbf{v},\mathbf{v}\rangle=\mathbf{v}^tI_{1,3}\mathbf{v}\neq0.$  Set
		$\mathbf v_1=\frac{i}{\sqrt{\mathbf{v}^tI_{1,3}\mathbf v}}\mathbf{v}$. There exists some $\mathbf v_j$, $j=2,3,4$ such that
		\[T_1=
		\left( \begin{array}{ccccc}
		\mathbf v_1 & \mathbf v_2 & \mathbf v_3 & \mathbf v_4
		\end{array}\right)^t\in SO(1,3,\C)\]
		Set $\tilde T=\hbox{diag}(T_1,I_n)$ and \[\mathbf v_0=T_1\mathbf v_1=\left( \begin{array}{ccccc}
		-1 & 0 & 0 &  0
		\end{array}\right)^t.\] We see that $\tilde T\sharp\tilde f$ has normalized potential $\hat B_1=T_1\tilde B_1$ such that
		\[\mathbf v_0^tI_{1,3}\hat B_1=\mathbf v_0^tT_1^tI_{1,3}T_1\hat B_1=\mathbf v_0^tT_1^tI_{1,3}\tilde B_1=\mathbf{v}_1^tI_{1,3}\tilde B_1=\frac{i}{\sqrt{\mathbf{v}^tI_{1,3}\mathbf v}}\mathbf{v}^tI_{1,3}\tilde B_1=0.\]
		Since $\mathbf v_0\in\R^4_1$ and $\mathbf v_0^tI_{1,3}\mathbf v_0<0$, $\tilde T\sharp \tilde f\in \mathcal{M}_{1}$.
		
		The proof of (2) is similar to (1) and we leave it for interested readers.
		(3) is a corollary of (1) and (2).  \end{proof}
	
	\section{On $K^\C-$dressing actions of minimal surfaces in $\SSS^{n+2}$ $\&$ $\hh^{n+2}$  }
	
	We will first discuss the general $K^\C-$dressing actions briefly. Then we will consider concretely two kinds of 1-parameter subgroups of $SO(1,3,\C)$ and their actions on dressing actions of minimal surfaces in $\SSS^{n+2}$  and $\hh^{n+2}$. One of the group changes the minimality and builds a local Willmore deformation between minimal surfaces in $\SSS^{n+2}$  and $\hh^{n+2}$. And the other one keeps  the minimality  and gives a family of minimal surfaces in $\SSS^{n+2} $($\hh^{n+2}$).
	
	\subsection{ $K^\C-$dressing actions of minimal surfaces in $\SSS^{n+2}$  \& $\hh^{n+2}$ }
	
	It is direct to have the following proposition by Theorem \ref{thm-minimal} and Proposition \ref{prop-dr}.
	\begin{proposition}\
		\begin{enumerate}
			\item
			The dimension of non-trivial $K^\C-$dressing actions of a Willmore surfaces in $\SSS^{n+2}$ is less or equal to
			\begin{equation}\label{eq-dim-KC}
			\dim SO(1,3,\C)\times SO(n,\C)-\dim SO(1,3)\times SO(n) =\frac{n(n-1)}{2}+6.
			\end{equation}
			\item
			The dimension of non-trivial $K^\C-$dressing actions of a minimal surface in $\SSS^{n+2}$ preserving minimality infinitesimally  is less or equal to
			\begin{equation}\label{eq-dim-KC-m1}
			\dim SO(3,\C)\times SO(n,\C)-\dim SO(3)\times SO(n) =\frac{n(n-1)}{2}+3.
			\end{equation}
			\item
			The dimension of non-trivial $K^\C-$dressing actions of a minimal surface in $\hh^{n+2}$ preserving minimality infinitesimally  is less or equal to
			\begin{equation}\label{eq-dim-KC-m2}
			\dim SO(1,2,\C)\times SO(n,\C)-\dim SO(1,2)\times SO(n) =\frac{n(n-1)}{2}+3.
			\end{equation}
		\end{enumerate}
	\end{proposition}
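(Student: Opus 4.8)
The plan is to read everything off the conjugation formula \eqref{eq-action}: a dressing by $T=T_1\times T_2\in K^\C=SO(1,3,\C)\times SO(n,\C)$ transforms the off-diagonal block of the normalized potential by $\hat B_1\mapsto T_1\hat B_1T_2^{-1}$. First I would identify the \emph{trivial} dressings, i.e.\ those producing a surface conformally equivalent to $y$. If $\mathbf k\in K=SO(1,3)\times SO(n)$ is a real element, then $\mathbf k F$ already takes values in the real loop group $\Lambda G_\sigma$, so in \eqref{eq-dressing-k} one may take $\hat V_+=I$ and $\mathbf k\sharp f=\mathbf k\cdot f$ is just the image of the conformal Gauss map under the isometry $\mathbf k$ of $G/K$; geometrically this is a conformal motion of $\SSS^{n+2}$, hence $\mathbf k\sharp y$ is M\"{o}bius equivalent to $y$. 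Thus the genuinely new Willmore surfaces obtained by dressing are parametrized by the quotient $K^\C/K$, whose real dimension is $\dim_\R K^\C-\dim_\R K=\tfrac{n(n-1)}{2}+6$; since the stabilizer in $K^\C$ of the congruence class of $f$ contains $K$ and may be larger, the dimension of non-trivial dressing actions is at most this number, which proves (1).

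For (2) and (3) the idea is to linearize the characterization in Theorem \ref{thm-minimal}. By \eqref{eq-min}, $f$ is minimal in $\SSS^{n+2}$ (resp.\ $\hh^{n+2}$) precisely when there is a real $\mathbf v\in\R^4_1$ with $\langle\mathbf v,\mathbf v\rangle<0$ (resp.\ $>0$) satisfying $\mathbf v^tI_{1,3}\hat B_1=0$. Writing an infinitesimal dressing as $X=X_1\oplus X_2\in\mathfrak{so}(1,3,\C)\oplus\mathfrak{so}(n,\C)$, the potential varies by $\delta\hat B_1=X_1\hat B_1-\hat B_1X_2$. Differentiating $\mathbf v(t)^tI_{1,3}\hat B_1(t)\equiv0$ along a curve $\mathbf v(t)$ of real vectors and using $\mathbf v^tI_{1,3}\hat B_1=0$, I expect to obtain $\bigl(\dot{\mathbf v}^tI_{1,3}+\mathbf v^tI_{1,3}X_1\bigr)\hat B_1=0$, in which $X_2$ has dropped out. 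For a full minimal surface the left kernel of $\hat B_1$ is spanned by $\mathbf v^tI_{1,3}$, so this forces $\dot{\mathbf v}=X_1\mathbf v+c\,\mathbf v$ for some scalar $c$. Splitting $X_1=A+iB$ with $A,B\in\mathfrak{so}(1,3)$ real and demanding that $\mathbf v(t)$ stay real up to scale, the imaginary part gives $B\mathbf v\in\R\mathbf v$; since $B$ is skew for $\langle\cdot,\cdot\rangle$ and $\langle\mathbf v,\mathbf v\rangle\ne0$, this is equivalent to $B\mathbf v=0$. Hence the imaginary part $B$ must lie in the stabilizer $\mathrm{Stab}_{\mathfrak{so}(1,3)}(\mathbf v)$, while $A$ and $X_2$ remain free.

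It remains to count. The stabilizer is $\mathfrak{so}(3)$ for timelike $\mathbf v$ and $\mathfrak{so}(1,2)$ for spacelike $\mathbf v$. The minimality-preserving infinitesimal dressings therefore fill out $\mathfrak{so}(1,3)\oplus i\,\mathrm{Stab}(\mathbf v)\oplus\mathfrak{so}(n,\C)$, and after quotienting by the trivial real directions $\mathfrak{so}(1,3)\oplus\mathfrak{so}(n)$ one is left with $i\,\mathrm{Stab}(\mathbf v)\oplus i\,\mathfrak{so}(n)$. For $\SSS^{n+2}$ this has real dimension $3+\tfrac{n(n-1)}{2}=\dim_\R SO(3,\C)\times SO(n,\C)-\dim_\R SO(3)\times SO(n)$, giving (2), and for $\hh^{n+2}$ the same value $3+\tfrac{n(n-1)}{2}=\dim_\R SO(1,2,\C)\times SO(n,\C)-\dim_\R SO(1,2)\times SO(n)$, giving (3).

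The linearization and the dimension bookkeeping (all dimensions real, so that $\dim_\R SO(k,\C)=2\dim SO(k)$) are routine. The step I expect to be the main obstacle is justifying that the defining vector $\mathbf v$ is essentially unique, i.e.\ that $\hat B_1$ has one-dimensional left kernel: this is exactly what pins $X_1$ to the stabilizer of the causal line $[\mathbf v]$, and it is where both the inequality ``$\le$'' and the word ``infinitesimally'' do their work. For degenerate surfaces with larger left kernel---for instance those lying in a proper subsphere, or the isotropic minimal surfaces in $\SSS^4$ studied in Section 6---the constraint on $X_1$ relaxes and the count must be revisited, so I would read the assertion as the generic bound, valid for full (non-totally-geodesic) minimal surfaces.
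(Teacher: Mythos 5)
Your argument is correct and is essentially the paper's own (the paper offers no proof beyond ``it is direct from Theorem \ref{thm-minimal} and Proposition \ref{prop-dr}'' followed by the identification of the non-trivial directions with the purely imaginary subspaces $\mathfrak{S}$, $\mathfrak{S}_1$, $\mathfrak{S}_{-1}$): you likewise quotient out the real subgroup $K$ as the trivial dressings and pin the imaginary part of $X_1$ to the stabilizer $\mathfrak{so}(3)$ or $\mathfrak{so}(1,2)$ of the causal vector $\mathbf v$, recovering exactly $i\,\mathrm{Stab}(\mathbf v)\oplus i\,\mathfrak{so}(n)$. Your closing caveat about surfaces whose potential has a larger left kernel is a reasonable reading of the ``less or equal to'' in the statement and is consistent with the paper's level of detail.
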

	The above spaces of the non-trivial $K^\C-$dressing actions can be locally expressed
	(near $I$) as  $\exp \mathfrak{S}, $ $\exp \mathfrak{S}_1$ and $\exp \mathfrak{S}_{-1} $ respectively, where
	\[\begin{split}
	\mathfrak{S}&=\{A\in \mathfrak{so}(1,3,\C)\times \mathfrak{so}(n,\C)|A=-\bar A\}, \\
	\mathfrak{S}_1&=\{A\in \mathfrak{so}(3,\C)\times \mathfrak{so}(n,\C)|A=-\bar A\},  \\
	\mathfrak{S}_{-1}&=\{A\in \mathfrak{so}(1,2,\C)\times \mathfrak{so}(n,\C)|A=-\bar A\}.\\
	\end{split} \]
	Here $\mathfrak{so}(3,\C)$ and $\mathfrak{so}(1,2,\C)$ are viewed as subsets of $\mathfrak{so}(1,3,\C)$ naturally.
	
	\subsection{On some $S^1-$dressing actions of minimal surfaces in $\SSS^{n+2}$  \& $\hh^{n+2}$ }
	In this subsection, we discuss a special $S^1-$dressing actions which build a smooth local Willmore deformations between  minimal surfaces in $\SSS^{n+2}$ and $\hh^{n+2}$.
	Set
	\begin{equation*}
	T_{1,t}=\left(
	\begin{array}{cccc}
	\cos t& i\sin t & 0& 0 \\
	i\sin t & \cos t & 0& 0 \\
	0& 0 & 1& 0 \\
	0& 0 & 0& 1 \\
	\end{array}
	\right)\in SO(1,3,\C),\ t\in [0,2\pi].
	\end{equation*}
	Then $T_{1,t}$, $t\in [0,2\pi]$, is a circle subgroup of $SO(1,3,\C)$.
	We see that $T_{1,t}\in  SO(1,3)\cap SO(1,3,\C)$ if and only if $t=0,\pi, 2\pi$. And $T_{1,t}\in ( i\cdot O(1,3))\cap SO(1,3,\C)$ if and only if $t=\frac{\pi}{2}, \frac{3\pi}{2}$.
	
	First, assume without lose of generality that the normalized potential of  a Willmore surface in $\SSS^{n+2}$ has the form
	\[
	\eta=\lambda^{-1}\left(
	\begin{array}{cc}
	0 & \hat{B}_1 \\
	-\hat{B}_1^tI_{1,3} & 0 \\
	\end{array}
	\right)dz, \hat{B}_1=\left(\begin{array}{ccccc}
	\hhh_1 & \cdots & \hhh_{n}
	\end{array}  \right).   \]
	By Theorem \ref{thm-minimal}, we can assume without lose of generality that the normalized potential of a minimal surface  in $\SSS^{n+2}$   has the form
	\begin{equation}\label{eq-min-sn-p}
	\hhh_j=h_{0j}\left(
	\begin{array}{c}
	0 \\
	\widetilde{ h_{2}} \\
	\widetilde{ h_{3}} \\
	\widetilde{ h_{4} }\\
	\end{array}
	\right),\ (\widetilde{h_{2}})^2+(\widetilde{h_{3}})^2+(\widetilde{h_{4}})^2=0, j=1,\cdots,n.
	\end{equation}
	Here $h_2$, $h_3$ and $h_4$ are linear independent meromorphic functions.
	\begin{theorem}\label{th-min-sn}
		The normalized potential
		\begin{equation*}
		\eta_t=\lambda^{-1}\left(
		\begin{array}{cc}
		0 & T_{1,t}\hat{B}_1 \\
		-\hat{B}_1^tT_{1,t}^tI_{1,3} & 0 \\
		\end{array}
		\right)dz,
		\end{equation*}
		with $ \hat{B}_1=\left(\begin{array}{ccccc}
		\hhh_1 & \cdots & \hhh_{n}
		\end{array}  \right)$ and   all of $\{\hhh_j,1\leq j\leq n\}$ being of the form \eqref{eq-min-sn-p}, locally gives a family of Willmore surfaces $y_t$, $t\in[0,2\pi)$,   such that  $(y_t)|_{t=0}$, $(y_t)|_{t=\pi}$ are conformally equivalent to  minimal surfaces in $\SSS^{n+2}$ and  $(y_t)|_{t=\frac{\pi}{2}}$, $(y_t)|_{t=\frac{3\pi}{2}}$ are conformally equivalent to  minimal surfaces in $\hh^{n+2}$, and for all other $t$, $y_t$ are Willmore surfaces in $\SSS^{n+2}$  not minimal in any space forms.
	\end{theorem}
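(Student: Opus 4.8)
\emph{Proof plan.} The plan is to play off two facts against each other: that the $K^{\C}$-dressing by $T=\mathrm{diag}(T_{1,t},I_n)$ keeps us inside the class of Willmore surfaces for \emph{every} $t$, and that minimality in a space form is governed, via Theorem \ref{thm-minimal}, by the existence of a nonzero \emph{real} vector $\mathbf v$ with $\mathbf v^tI_{1,3}\hat B_1\equiv0$, whose existence and causal type I will follow as $t$ varies. First I would note that $T_{1,t}\in SO(1,3,\C)$, so $T\in K^{\C}=SO(1,3,\C)\times SO(n,\C)$, and by Proposition \ref{prop-dr} the form $\eta_t$ is precisely the normalized potential of $T\sharp y$. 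Since $T_{1,t}^tI_{1,3}T_{1,t}=I_{1,3}$, the isotropy condition is preserved,
\[(T_{1,t}\hat B_1)^tI_{1,3}(T_{1,t}\hat B_1)=\hat B_1^tT_{1,t}^tI_{1,3}T_{1,t}\hat B_1=\hat B_1^tI_{1,3}\hat B_1=0,\]
so $\eta_t$ is an admissible Willmore potential; feeding it into the DPW recipe of Theorem \ref{thm-DPW} yields, on a common open subset $\mathbb D_{\mathfrak I}\subset\mathbb D$ (this is where ``locally'' enters), the harmonic conformal Gauss map of a Willmore surface $y_t$, with smooth dependence on $t$ coming from analyticity of the Iwasawa splitting together with smoothness of $t\mapsto T_{1,t}$.

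Next I would determine, for each $t$, the full space of candidate vectors in Theorem \ref{thm-minimal}, namely $\{\mathbf w\in\C^4:\mathbf w^tI_{1,3}(T_{1,t}\hat B_1)\equiv0\}$. Writing $\mathbf w^tI_{1,3}T_{1,t}=(p_1,p_2,p_3,p_4)$ and using that every column of $\hat B_1$ is a scalar multiple (by some $h_{0j}$, not all identically zero) of $(0,\widetilde{h_2},\widetilde{h_3},\widetilde{h_4})^t$, the condition collapses to $p_2\widetilde{h_2}+p_3\widetilde{h_3}+p_4\widetilde{h_4}\equiv0$. The linear independence of $\widetilde{h_2},\widetilde{h_3},\widetilde{h_4}$ then forces $p_2=p_3=p_4=0$, so the solution space is exactly one complex dimension, spanned by $\mathbf w_0=T_{1,t}\mathbf v$ with $\mathbf v=(1,0,0,0)^t$; indeed $\mathbf w_0^tI_{1,3}(T_{1,t}\hat B_1)=\mathbf v^tI_{1,3}\hat B_1=0$. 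Explicitly $\mathbf w_0=(\cos t,\,i\sin t,\,0,\,0)^t$.

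The conclusion then reduces to deciding, for each $t$, whether the complex line $\C\mathbf w_0$ contains a nonzero real vector and, if so, computing its causal type. A multiple $c\,\mathbf w_0=(c\cos t,\,ic\sin t,\,0,\,0)^t$ is real iff both $c\cos t$ and $ic\sin t$ are real, which forces $t\in\{0,\pi\}$ (then $c$ real) or $t\in\{\pi/2,3\pi/2\}$ (then $c$ purely imaginary), and fails for all other $t$. Since $\langle\mathbf w_0,\mathbf w_0\rangle=\mathbf v^tI_{1,3}\mathbf v=-1$ by $SO(1,3,\C)$-invariance, the real vector $c\mathbf w_0$ has norm $-c^2$: for $t=0,\pi$ this is $<0$, giving a minimal surface in $\SSS^{n+2}$ by Theorem \ref{thm-minimal}(2); for $t=\pi/2,3\pi/2$ the scalar $c$ is imaginary, so $-c^2>0$, giving a minimal surface in $\hh^{n+2}$ by Theorem \ref{thm-minimal}(3); for every remaining $t$ no real $\mathbf w$ solves the minimality equation, so $y_t$ is Willmore but not minimal in any space form.

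The step I expect to be the crux is the \emph{exact} one-dimensionality of the solution space. The complex equation $\mathbf w_0^tI_{1,3}(T_{1,t}\hat B_1)=0$ holds for all $t$, so the entire distinction between $\SSS^{n+2}$, $\hh^{n+2}$, and ``no space form'' is invisible at the complex level and emerges only from the reality requirement in Theorem \ref{thm-minimal}. Ruling out minimality for generic $t$ therefore hinges on knowing that \emph{every} admissible $\mathbf w$ is a complex multiple of $\mathbf w_0$, which is exactly where the linear independence of $\widetilde{h_2},\widetilde{h_3},\widetilde{h_4}$ is indispensable: without it a second, possibly real, null direction could enter and spoil the dichotomy.
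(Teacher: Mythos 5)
Your argument is correct and follows essentially the same route as the paper: both identify $\mathbf w_0=T_{1,t}(1,0,0,0)^t=(\cos t,\,i\sin t,\,0,\,0)^t$ as the solution of $\mathbf w^tI_{1,3}T_{1,t}\hat B_1\equiv0$, use the linear independence of $\widetilde{h_2},\widetilde{h_3},\widetilde{h_4}$ to exclude any further solutions, and then apply Theorem \ref{thm-minimal} to read off the space form from whether the complex line $\C\mathbf w_0$ meets $\R^4_1\setminus\{0\}$ and from the sign of $-c^2$. Your version is if anything slightly cleaner, since you compute the full (one-complex-dimensional) solution space once and deduce all cases from the reality analysis, where the paper exhibits $\mathbf v_t$ for the special values of $t$ and runs a separate contradiction argument for generic $t$.
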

	\begin{proof}
		It is direct to see that
		\[\mathbf v_t=\left( \begin{array}{ccccc}
		\cos t& i\sin t & 0 &  0
		\end{array}\right)^t=T_{1,t}^t\left( \begin{array}{ccccc}
		1 & 0 & 0 &  0
		\end{array}\right)^t\]
		satisfies
		\[\mathbf v_t^tI_{1,3}T_{1,t}\hat B_1\equiv0.\]
		So when $t=0$ or $\pi$, one obtains minimal surfaces in $\SSS^{n+2}$.
		So when $t=\frac{\pi}{2}$ or $\frac{3\pi}{2}$, one obtains minimal surfaces in $\hh^{n+2}$.
		
		For other $t$, assume $y_t$ is conformal to some minimal surface in space forms. Then there exists a real vector $\mathbf v\in\R^4_1$ such that
		$\mathbf vI_{1,3}T_{1,t}\hat B_1\equiv0$. So w.l.g. we can assume $\mathbf v=\left( \begin{array}{ccccc}
		a & b& c&  0
		\end{array}\right)^t$.  So we have
		\[-ia h_2\sin t+bh_2\cos t+c h_3 =0\]
		Since $a,b\in \R$, we see that $c\neq0$ and $h_3=-\frac{b-ia}{c}h_2$, which contradicts to the fact that $h_2$ and $h_3$ are linear independent. This finishes the proof.
	\end{proof}
	Similarly, by Theorem \ref{thm-minimal}  we can assume without lose of generality that the normalized potential of a minimal surface  in  $\hh^{n+2}$ has the form
	\begin{equation}\label{eq-min-hn-p}
	\hhh_j= h_{0j}\left(
	\begin{array}{c}
	\widetilde{ h_{1}} \\
	0 \\
	\widetilde{ h_{3}} \\
	\widetilde{ h_{4} }\\
	\end{array}
	\right),~-(\widetilde{h_{1}})^2+(\widetilde{h_{3}})^2+(\widetilde{h_{4}})^2=0, j=1,\cdots,n.
	\end{equation}
	\begin{theorem}\label{th-min-hn}The normalized potential
		\begin{equation*}
		\eta_t=\lambda^{-1}\left(
		\begin{array}{cc}
		0 & T_{1,t}\hat{B}_1 \\
		-\hat{B}_1^tT_{1,t}^tI_{1,3} & 0 \\
		\end{array}
		\right)dz,
		\end{equation*}
		with $ \hat{B}_1=\left(\begin{array}{ccccc}
		\hhh_1 & \cdots & \hhh_{n}
		\end{array}  \right)$ and all of $\{\hhh_j,1\leq j\leq n\}$ being of the form \eqref{eq-min-hn-p},
		locally gives a family of Willmore surfaces $y_t$, $t\in[0,2\pi)$,  such that $(y_t)|_{t=0}$, $(y_t)|_{t=\pi}$ are conformally equivalent to  minimal surfaces in $\hh^{n+2}$ and $(y_t)|_{t=\frac{\pi}{2}}$, $(y_t)|_{t=\frac{3\pi}{2}}$ are conformally equivalent to  minimal surfaces in $\SSS^{n+2}$, and for all other $t$, $y_t$ is a non-minimal Willmore surface in $\SSS^{n+2}$.
	\end{theorem}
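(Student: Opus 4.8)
The plan is to mirror the proof of Theorem~\ref{th-min-sn}, interchanging the roles of the timelike and spacelike directions. First, since $T_{1,t}\in SO(1,3,\C)$, the block $\mathrm{diag}(T_{1,t},I_n)$ lies in $K^\C$, so by Proposition~\ref{prop-dr} the form $\eta_t$ is the normalized potential of $\mathrm{diag}(T_{1,t},I_n)\sharp f$, where $f$ is the conformal Gauss map of $y$; hence each $y_t$ is a locally defined Willmore surface in $\SSS^{n+2}$, the family being defined on the open set where the requisite Iwasawa splitting exists. It then remains only to decide, for each $t$, whether $y_t$ is conformally minimal in a space form, which by Theorem~\ref{thm-minimal} is equivalent to the existence of a nonzero \emph{real} vector $\mathbf v\in\R^4_1$ with $\mathbf v^tI_{1,3}(T_{1,t}\hat B_1)\equiv 0$, the sign of $\langle\mathbf v,\mathbf v\rangle$ then selecting the space form.

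The decisive computation is that, since $T_{1,t}$ is symmetric and $T_{1,t}\in SO(1,3,\C)$, we have $T_{1,t}I_{1,3}T_{1,t}=I_{1,3}$, so the vector $\mathbf v_t:=T_{1,t}^t(0,1,0,0)^t=(i\sin t,\cos t,0,0)^t$ obeys
\[
\mathbf v_t^tI_{1,3}T_{1,t}\hat B_1=(0,1,0,0)\,I_{1,3}\hat B_1=(0,1,0,0)\hat B_1\equiv 0,
\]
the last step because the second row of $\hat B_1$ vanishes in the normal form \eqref{eq-min-hn-p}. Reading off the special values: at $t=0,\pi$ the vector $\mathbf v_t=\pm(0,1,0,0)^t$ is real with $\langle\mathbf v_t,\mathbf v_t\rangle=+1$, so case~(3) of Theorem~\ref{thm-minimal} gives minimal surfaces in $\hh^{n+2}$; at $t=\tfrac{\pi}{2},\tfrac{3\pi}{2}$ one gets $\mathbf v_t=\pm i\,(1,0,0,0)^t$, whence the real vector $(1,0,0,0)^t$ annihilates the potential and has $\langle\cdot,\cdot\rangle=-1$, so case~(2) gives minimal surfaces in $\SSS^{n+2}$.

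For the remaining $t$, suppose toward a contradiction that $y_t$ were conformally minimal in some space form, so that there is a nonzero real $\mathbf v=(a,b,c,d)^t$ with $\mathbf v^tI_{1,3}T_{1,t}\hat B_1\equiv 0$. Using the explicit columns $T_{1,t}\hhh_j=h_{0j}\big(\cos t\,\widetilde{h_1},\,i\sin t\,\widetilde{h_1},\,\widetilde{h_3},\,\widetilde{h_4}\big)^t$, this reduces to
\[
(-a\cos t+ib\sin t)\,\widetilde{h_1}+c\,\widetilde{h_3}+d\,\widetilde{h_4}=0,
\]
and the linear independence of $\widetilde{h_1},\widetilde{h_3},\widetilde{h_4}$ forces $c=d=0$ together with $-a\cos t+ib\sin t=0$; separating real and imaginary parts and using $\cos t\neq0\neq\sin t$ for $t\notin\{0,\tfrac{\pi}{2},\pi,\tfrac{3\pi}{2}\}$ yields $a=b=0$, i.e.\ $\mathbf v=0$, a contradiction. (If one prefers to assume only pairwise independence, the real $3$--$4$ rotation, which lies in $SO(1,3)$, commutes with $T_{1,t}$, and preserves \eqref{eq-min-hn-p}, normalizes $d=0$ and reduces matters to independence of $\widetilde{h_1},\widetilde{h_3}$, exactly as in Theorem~\ref{th-min-sn}.) Thus $y_t$ is a non-minimal Willmore surface in $\SSS^{n+2}$ for all other $t$. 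I expect the only genuinely delicate point to be this last step: one must insist that minimality in a space form requires a \emph{real} annihilating vector, which is precisely the asymmetry the deformation exploits, since the complex vector $\mathbf v_t$ survives for every $t$ while a real one exists only at the four distinguished parameters.
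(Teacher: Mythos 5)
Your proposal is correct and follows essentially the same route as the paper: the paper's proof of this theorem is literally ``the same as the above theorem'' (Theorem~\ref{th-min-sn}), namely exhibiting the annihilating vector $\mathbf v_t=T_{1,t}^t\mathbf v_0$ (here with $\mathbf v_0=(0,1,0,0)^t$ spacelike instead of $(1,0,0,0)^t$ timelike), reading off reality and the sign of $\langle\mathbf v_t,\mathbf v_t\rangle$ at the four distinguished values of $t$, and ruling out a real annihilating vector at all other $t$ via the linear independence of the $\widetilde{h_j}$. Your explicit verification of $T_{1,t}I_{1,3}T_{1,t}=I_{1,3}$ and the reduction $d=0$ by a real $3$--$4$ rotation are exactly the (omitted) details the paper intends.
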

	\begin{proof} The proof is the same as above theorem. So we omit it.
	\end{proof}
	\begin{remark} Comparing  \eqref{eq-min-sn-p} and  \eqref{eq-min-hn-p}, we see that the loop group data of minimal surfaces in $\SSS^{n+2}$ and $\hh^{n+2}$ differ essentially by some shifting and multiplying some $i$ for some terms, which can achieved of the above dressing action. This is the key observation \& motivation of the   $K^{\C}$ dressing action.
	\end{remark}
	\subsection{On some $\R^1-$dressing actions preserving minimal surfaces in $\SSS^{n+2}$ \&  $\hh^{n+2}$}
	Set
	\begin{equation*}
	T_{2,t}=\left(
	\begin{array}{cccc}
	1& 0& 0 & 0\\
	0& 1& 0 & 0 \\
	0& 0& \cosh t& i\sinh t \\
	0& 0& -i\sinh t & \cosh t  \\
	\end{array}
	\right)\in SO(1,3,\C),\ t\in \R.
	\end{equation*}
	Then $T_{2,t}$, $t\in\R$, is a  $\R-$subgroup of $SO(1,3,\C)$.
	Note that $T_{2,t}\in  SO(1,3)\cap SO(1,3,\C)$ if and only if $t=0$.
	\begin{theorem}\
		\begin{enumerate}
			\item The normalized potential
			\begin{equation*}\label{eq-r1-1}
			\eta_t=\lambda^{-1}\left(
			\begin{array}{cc}
			0 & T_{2,t}\hat{B}_1 \\
			-\hat{B}_1^tT_{2,t}^tI_{1,3} & 0 \\
			\end{array}
			\right)dz,
			\end{equation*}
			with $ \hat{B}_1=\left(\begin{array}{ccccc}
			\hhh_1 & \cdots & \hhh_{n}
			\end{array}  \right)$ and $\hhh_j$ being of the form \eqref{eq-min-sn-p}, locally gives a family of Willmore surfaces $y_t$ conformally equivalent to  minimal surfaces in $\SSS^{n+2}$.
			\item
			The normalized potential
			\begin{equation*} \eta_t=\lambda^{-1}\left(
			\begin{array}{cc}
			0 & T_{2,t}\hat{B}_1 \\
			-\hat{B}_1^tT_{2,t}^tI_{1,3} & 0 \\
			\end{array}
			\right)dz,
			\end{equation*}
			with $ \hat{B}_1=\left(\begin{array}{ccccc}
			\hhh_1 & \cdots & \hhh_{n}
			\end{array}  \right)$ and $\hhh_j$ being of the form \eqref{eq-min-hn-p}, locally gives a family of Willmore surfaces $y_t$ conformally equivalent to  minimal surfaces in $\hh^{n+2}$.
		\end{enumerate}
	\end{theorem}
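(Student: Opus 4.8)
The plan is to reduce both statements to Theorem \ref{thm-minimal} by exhibiting, for each $t$, a single real constant vector witnessing minimality, and to observe that the subgroup $T_{2,t}$ has been chosen precisely so that this vector is left fixed.

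First I would record that $\eta_t$ is genuinely the normalized potential of a Willmore surface. Since $T=\mathrm{diag}(T_{2,t},I_n)\in SO(1,3,\C)\times SO(n,\C)=K^\C$, Proposition \ref{prop-dr} (with $T_1=T_{2,t}$, $T_2=I_n$) shows that $\eta_t=T\eta T^{-1}$ is exactly the normalized potential of the dressed harmonic map $T\sharp f$, whose associated Willmore surface $y_t$ is well defined on some open subdomain via the DPW recipe of Theorem \ref{thm-DPW}; this accounts for the word ``locally''. The isotropy condition needed in Theorem \ref{thm-minimal}, namely $(T_{2,t}\hat B_1)^tI_{1,3}(T_{2,t}\hat B_1)=\hat B_1^tI_{1,3}\hat B_1=0$, is automatic because $T_{2,t}\in SO(1,3,\C)$ preserves $I_{1,3}$. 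Thus for every $t$ the only thing left to identify is the ambient space form.

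For part (1), the normalized data \eqref{eq-min-sn-p} has vanishing first row, so $\mathbf v=(1,0,0,0)^t\in\R^4_1$ satisfies $\mathbf v^tI_{1,3}\hat B_1=0$. The decisive point is that $T_{2,t}$ acts only on the third and fourth coordinates and fixes the first two; concretely the first row of $T_{2,t}$ equals $(1,0,0,0)$, so $\mathbf v^tI_{1,3}T_{2,t}=-(1,0,0,0)T_{2,t}=-(1,0,0,0)=\mathbf v^tI_{1,3}$. Consequently $\mathbf v^tI_{1,3}(T_{2,t}\hat B_1)=\mathbf v^tI_{1,3}\hat B_1=0$, i.e. the same real vector $\mathbf v$ witnesses the Lorentzian orthogonality \eqref{eq-min} for the dressed potential $T_{2,t}\hat B_1$. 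Since $\langle\mathbf v,\mathbf v\rangle=\mathbf v^tI_{1,3}\mathbf v=-1<0$ for all $t$, case (2) of Theorem \ref{thm-minimal} gives that $y_t$ is conformally equivalent to a minimal surface in $\SSS^{n+2}$ for every $t$.

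Part (2) is identical after replacing $\mathbf v$ by $(0,1,0,0)^t$: the data \eqref{eq-min-hn-p} has vanishing second row, the second row of $T_{2,t}$ is $(0,1,0,0)$, hence again $\mathbf v^tI_{1,3}T_{2,t}=\mathbf v^tI_{1,3}$ and $\mathbf v^tI_{1,3}(T_{2,t}\hat B_1)=0$; now $\langle\mathbf v,\mathbf v\rangle=+1>0$, so case (3) of Theorem \ref{thm-minimal} yields minimality in $\hh^{n+2}$. There is no genuine obstacle here; the whole content is the choice of subgroup. In contrast to $T_{1,t}$ of Theorem \ref{th-min-sn}, which mixes the first two coordinates and hence fails to preserve a real witnessing vector (so minimality is generically lost, and at $t=\tfrac{\pi}{2}$ the surviving vector becomes an imaginary multiple of a real vector of opposite causal type, flipping the space form), the block $T_{2,t}$ is designed so that the distinguished vector lies in its fixed subspace. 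Therefore both the orthogonality relation and the sign of $\langle\mathbf v,\mathbf v\rangle$ persist, and minimality together with the ambient space form is preserved along the entire family.
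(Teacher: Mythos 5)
Your proposal is correct and follows exactly the paper's (one-line) proof: apply Theorem \ref{thm-minimal} with the constant vectors $\mathbf v=(1,0,0,0)^t$ and $\mathbf v=(0,1,0,0)^t$, which are annihilated by the potentials of the form \eqref{eq-min-sn-p} and \eqref{eq-min-hn-p} respectively and are fixed (after contraction with $I_{1,3}$) by $T_{2,t}$, so the witnessing vector and its causal type persist for all $t$. The only additions beyond the paper's argument are the routine verifications (isotropy of $T_{2,t}\hat B_1$ via $T_{2,t}\in SO(1,3,\C)$, and the identification of $\eta_t$ as the potential of a dressed harmonic map), which are correct and harmless.
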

	\begin{proof}  By Theorem \ref{thm-minimal} and setting  $\mathbf v=\left( \begin{array}{ccccc}
		1 & 0 & 0 &  0
		\end{array}\right)^t$ and $\left( \begin{array}{ccccc}
		0 & 1 & 0 &  0
		\end{array}\right)^t$ respectively, we obtain (1) and (2) respectively.
	\end{proof}
	
	\begin{remark} Note that the $T_{2,t}$ action on \eqref{eq-min-sn-p}, are used exactly as the famous Lopez-Ros deformation for minimal surfaces in $\R^3$ \cite{LR}. We refer to \cite{LM} for the simple factor dressing expression of the Lopez-Ros deformation for minimal surfaces in $\R^3$, which is different from the action considered in this paper.
	\end{remark}

	\section{Examples of Minimal surfaces in $\mathbb{H}^4$}

	In this section, we will illustrate the $K^{\C}-$dressing actions for isotropic minimal surfaces in $\SSS^{4}$ in terms of the formula in above section.
	$K^{\C}-$dressing actions of the Veronese 2-spheres give many explicit examples of Willmore two-spheres in $S^4$. In particular, we obtain many examples of complete minimal surfaces in $\hh^4$, defined on disks, annulus or Moebius strips. By these examples we show that there exists complete minimal disks with their Willmore energy tending to zero.
	Moreover, by consider the Willmore deformations of generalizations of Veronese two-spheres in $\SSS^4$, we obtain  complete minimal disks with arbitrary Willmore energy. Some new non-oriented minimal Moebius strips are also obtained in this way.
	
	We will first recall a Weierstrass type formula for  isotropic (Willmore) surfaces in $\SSS^{4}$ \cite{Wang-S4}. Then we will discuss in details of two kind of one-parameter group action on isotropic surfaces in $\SSS^{4}$.  With help of the formula, we derive many explicit examples with expected properties in Section 5.3-5.6.
	
	We refer to \cite{Ejiri1988}, \cite{Mon}, \cite{Mus1}, \cite{Wang-S4} for more discussions of isotropic Willmore surfaces.
	\subsection{The Weierstrass formula for isotropic surfaces in $\SSS^{4}$}
	The following formula provides all explicit examples in this paper. So we include it here for readers' convenience.
	\begin{theorem}\label{th-willmore-iso-formula}\cite{Wang-S4}
		Let $M$ be a Riemann surface, and let
		\begin{equation}\label{eq-b1}\eta=\lambda^{-1}\left(
		\begin{array}{cc}
		0 & \hat{B}_1\\
		-\hat{B_1}I_{1,3} & 0 \\
		\end{array}
		\right)\dd z, \hbox { with }\hat{B}_1=\left(\mathbf{h}\ \ i\mathbf{h}\right)=\frac{1}{2}\left(
		\begin{array}{cccc}
		i(h_3'-h_2')&  -(h_3'-h_2') \\
		i(h_3'+h_2')&  -(h_3'+h_2')   \\
		h_4'-h_1' & i(h_4'-h_1')   \\
		i(h_4'+h_1') & -(h_4'+h_1')  \\
		\end{array}
		\right).\end{equation}
		Here $h_j$ are  meromorphic functions on $M$ satisfying $h_1'h_4'+h_2'h_3'=0, \hbox { and } h_1'h_2'\not\equiv0.$
		Then the corresponding Willmore surface $[Y_{\lambda}]$
		is of the form $Y_{\lambda} =R_{\lambda}Y_1$, with
		\begin{equation}\label{eq-willmore in s4-y-1}
		\begin{split}
		Y_1=\left(
		\begin{array}{c}
		y_0 \\
		y_1 \\
		y_2 \\
		y_3 \\
		y_4 \\
		y_5 \\
		\end{array}
		\right)
		=~&|h_1'|^2\left(\begin{array}{cc}
		(1+|h_2|^2+|h_4|^2)\\
		1-|h_2|^2+|h_4|^2\\
		-i (-\bar{h}_2h_4+h_2\bar{h}_4)\\
		-(\bar{h}_2h_4+h_2\bar{h}_4) \\
		i (\bar{h}_2-h_2) \\
		(\bar{h}_2+h_2) \\
		\end{array}\right)+|h_2'|^2\left(\begin{array}{cc}
		(1+|h_1|^2+|h_3|^2)\\
		-(1+|h_1|^2-|h_3|^2)\\
		i (-\bar{h}_1h_3+h_1\bar{h}_3)\\
		\bar{h}_1h_3+h_1\bar{h}_3 \\
		i (h_3-\bar{h}_3) \\
		-(h_3+\bar{h}_3) \\
		\end{array}\right)\\
		&~~~~+h_1'\bar{h}_2'\left(\begin{array}{cc}
		-\bar{h}_1h_2+\bar{h}_3h_4\\
		\bar{h}_1h_2+\bar{h}_3h_4\\
		-i (1+\bar{h}_1h_4+h_2\bar{h}_3)\\
		-(1-\bar{h}_1h_4+h_2\bar{h}_3)\\
		i (-\bar{h}_1+h_4) \\
		-(\bar{h}_1+h_4) \\
		\end{array}\right)+ \bar{h}_1'h_2'
		\overline{\left(\begin{array}{cc}
			-\bar{h}_1h_2+\bar{h}_3h_4\\
			\bar{h}_1h_2+\bar{h}_3h_4\\
			-i (1+\bar{h}_1h_4+h_2\bar{h}_3)\\
			-(1-\bar{h}_1h_4+h_2\bar{h}_3)\\
			i (-\bar{h}_1+h_4) \\
			-(\bar{h}_1+h_4) \\
			\end{array}\right)},\\
		\end{split}
		\end{equation}
		and
		\begin{equation}
		R_{\lambda}=\left(
		\begin{array}{ccccccc}
		1 & 0& 0 & 0 & 0 & 0 \\
		0 & 1& 0 & 0 & 0 & 0 \\
		0 & 0& 1 & 0& 0 & 0 \\
		0 & 0& 0 & 1& 0 & 0  \\
		0 & 0 & 0 & 0& \frac{\lambda+\lambda^{-1}}{2} &  \frac{\lambda-\lambda^{-1}}{-2i} \\
		0 & 0& 0 & 0&  \frac{\lambda-\lambda^{-1}}{2i} &  \frac{\lambda+\lambda^{-1}}{2}  \\
		\end{array}
		\right).
		\end{equation}
		$[Y_{\lambda}]$ is an (possibly branched) isotropic Willmore surface in $\SSS^{4}$.
		
		Moreover, a lift $\hat Y_1$ of the dual surface of $y_1=[Y_1]$ is of the form
		\begin{equation}\label{eq-willmore in s4-y-2}
		\begin{split}
		\hat Y_1=\left(
		\begin{array}{c}
		\hat{ y}_0 \\
		\hat{ y}_1 \\
		\hat{y}_2 \\
		\hat{y}_3 \\
		\hat{y}_4 \\
		\hat{ y}_5 \\
		\end{array}
		\right)=&|h_1'|^2\left(\begin{array}{cc}
		(1+|h_3|^2+|h_4|^2)\\
		-(1-|h_3|^2+|h_4|^2)\\
		-i (\bar{h}_3h_4-h_3\bar{h}_4)\\
		\bar{h}_3h_4+h_3\bar{h}_4\\
		i (-\bar{h}_3+h_3) \\
		-(\bar{h}_3+h_3) \\
		\end{array}\right)+
		|h_3'|^2\left(\begin{array}{cc}
		(1+|h_1|^2+|h_2|^2)\\
		1+|h_1|^2-|h_2|^2\\
		-i (-\bar{h}_1h_2+h_1\bar{h}_2)\\
		-\bar{h}_1h_2-h_1\bar{h}_2 \\
		-i (h_2-\bar{h}_2) \\
		h_2+\bar{h}_2 \\
		\end{array}\right)\\
		&~~~~+h_1'\bar{h}_3'\left(\begin{array}{cc}
		-\bar{h}_1h_3+\bar{h}_2h_4\\
		-\bar{h}_1h_3-\bar{h}_2h_4\\
		i(1+\bar{h}_1h_4+\bar{h}_2h_3)\\
		(1-\bar{h}_1h_4+\bar{h}_2h_3))\\
		i (\bar{h}_1-h_4) \\
		\bar{h}_1+h_4 \\
		\end{array}\right)+ \bar{h}_1'h_3'
		\overline{\left(\begin{array}{cc}
			-\bar{h}_1h_3+\bar{h}_2h_4\\
			-\bar{h}_1h_3-\bar{h}_2h_4\\
			i(1+\bar{h}_1h_4+\bar{h}_2h_3)\\
			(1-\bar{h}_1h_4+\bar{h}_2h_3))\\
			i (\bar{h}_1-h_4) \\
			\bar{h}_1+h_4 \\
			\end{array}\right)}.\\
		\end{split}
		\end{equation}
		and $\hat Y_{\lambda}= R_{\lambda} \hat Y_1$.
		
		Moreover we have
		\begin{enumerate}
			\item  $\hat Y_{\lambda}$ reduces to a point and $[Y_{\lambda}]$ is conformally equivalent to an isotropic minimal surface in $\R^4$, if and only if $h_3'=h_4'=0$;
			\item  Both $[Y_{\lambda}]$ and   $[\hat  Y_{\lambda}]$  are conformally equivalent to  (full) isotropic minimal surfaces in $\SSS^{4}$, if and only if there exists  a non-zero, real, constant vector $\mathbf{v}=(\mathrm{v}_1,\mathrm{v}_2,\mathrm{v}_3,\mathrm{v}_4)^t\in \R^4_1$ with $\mathbf{v}^tI_{1,3}\mathbf{v}=-1$, such that
			\begin{equation}\label{eq-w-iso-m-s}
			(-\mathrm{v}_3+i\mathrm{v}_4)h_1'+(\mathrm{v}_1+i\mathrm{v}_2)h_2'+(-\mathrm{v}_1+i\mathrm{v}_2)h_3'+( \mathrm{v}_3+i\mathrm{v}_4)h_4' =0;
			\end{equation}

			\item Both $[Y_{\lambda}]$ and  $[\hat  Y_{\lambda}]$  are conformally equivalent to   (full) isotropic minimal surfaces in $\mathbb H^4$, if and only if there exists  a non-zero, real, constant vector $\mathbf{v}=(\mathrm{v}_1,\mathrm{v}_2,\mathrm{v}_3,\mathrm{v}_4)^t\in \R^4_1$ with $\mathbf{v}^tI_{1,3}\mathbf{v}=1$, such that
			\begin{equation}\label{eq-w-iso-m-h}(-\mathrm{v}_3+i\mathrm{v}_4)h_1'+(\mathrm{v}_1+i\mathrm{v}_2)h_2'+(-\mathrm{v}_1+i\mathrm{v}_2)h_3'+( \mathrm{v}_3+i\mathrm{v}_4)h_4' =0.
			\end{equation}
		\end{enumerate}
	\end{theorem}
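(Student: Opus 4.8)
The plan is to run the DPW machine of Theorem \ref{thm-DPW} on the given potential $\eta=\lambda^{-1}\eta_{-1}\dd z$ and to exploit the fact that $\eta_{-1}$ is strongly nilpotent. Writing $\hat B_1=(\hhh\ \ i\hhh)$, the two algebraic inputs are: first, $\hat B_1\hat B_1^t=\hhh\hhh^t+(i\hhh)(i\hhh)^t=0$, which holds for purely structural reasons; and second, $\hat B_1^tI_{1,3}\hat B_1=0$, which is exactly the isotropy condition $\hhh^tI_{1,3}\hhh=0$ (equivalently $h_1'h_4'+h_2'h_3'=0$). A short block computation then shows that any triple product $\eta_{-1}(w_1)\eta_{-1}(w_2)\eta_{-1}(w_3)$ vanishes, since it carries the factor $\hat B_1(w_2)\hat B_1(w_3)^t=0$. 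First I would use this to integrate $F_-^{-1}\dd F_-=\eta$ in closed form: the Picard iteration truncates at second order, giving $F_-=I+\lambda^{-1}P_1+\lambda^{-2}P_2$, where $P_1=\int\eta_{-1}$ is linear in the primitives $h_j$ of the densities $h_j'$, and $P_2$ is the iterated integral of $\hat B_1(w_1)^tI_{1,3}\hat B_1(w_2)$. The integrand of $P_2$ is symmetric in $(w_1,w_2)$, so the iterated integral collapses to ordinary products of primitives such as $h_2h_3$ and $h_1h_4$; thus $F_-$ is completely explicit in the $h_j$ and $h_j'$.

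Next I would recover the surface from $F_-$. Iwasawa-decomposing $F_-=F\,V_+$ into its real part $F\in\Lambda SO^+(1,n+3)_\sigma$ and a positive loop $V_+$, the canonical lift is read off by the standard rule $Y_\lambda=\tfrac{1}{\sqrt2}F(\mathbf e_1-\mathbf e_2)$ coming from the frame \eqref{F}. Because $F_-$ is a $\lambda^{-1}$-polynomial of degree two, this Iwasawa splitting is algebraic: it amounts to orthonormalizing the columns of $F_-$ against the indefinite Hermitian form, so the entries of $Y_\lambda$ are sesquilinear in those columns. This explains, and is what produces, the shape of \eqref{eq-willmore in s4-y-1}, whose four blocks $|h_1'|^2(\cdots)$, $|h_2'|^2(\cdots)$, $h_1'\bar h_2'(\cdots)$ and $\bar h_1'h_2'(\cdots)$ are precisely such conjugate-bilinear combinations; one checks along the way that $\langle Y_1,Y_1\rangle=0$, so $[Y_1]$ lands in $Q^4$, and that the only surviving $\lambda$-dependence sits in the two normal directions, encoded by $R_\lambda$. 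The dual lift $\hat Y_1$ is obtained in the same way by extracting the second null direction $N$ of the frame instead of $Y$, yielding \eqref{eq-willmore in s4-y-2} with $\hat Y_\lambda=R_\lambda\hat Y_1$.

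For the three characterizations I would invoke Theorem \ref{thm-minimal}. By that theorem $[Y_\lambda]$ is conformally a minimal surface in a space form precisely when there is a nonzero real constant $\mathbf v$ with $\mathbf v^tI_{1,3}\hat B_1\equiv0$, and since $\hat B_1=(\hhh\ \ i\hhh)$ this reduces to the single complex linear equation $\mathbf v^tI_{1,3}\hhh=0$; expanding $\hhh$ in the $h_j'$ turns it into the linear relation \eqref{eq-w-iso-m-s} (identical to \eqref{eq-w-iso-m-h}), while the sign of $\langle\mathbf v,\mathbf v\rangle$ selects $\R^4$, $\SSS^{4}$ or $\hh^{4}$. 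Normalizing $\mathbf v^tI_{1,3}\mathbf v$ to $-1$, resp. $+1$, gives the timelike and spacelike cases (2) and (3); the same $\mathbf v$ works for the dual since $\hat Y_1$ is built from the same $F_-$, so $[Y_\lambda]$ and $[\hat Y_\lambda]$ are simultaneously minimal. For the lightlike case (1), normalizing $\mathbf v$ to $(1,-1,0,0)^t$ converts $\mathbf v^tI_{1,3}\hhh=0$ into $h_3'=h_4'=0$; then $h_3,h_4$ are constants and the $|h_3'|^2$- and $h_1'\bar h_3'$-blocks of \eqref{eq-willmore in s4-y-2} vanish, so $\hat Y_1$ is a constant vector and $[\hat Y_\lambda]$ degenerates to a point, leaving the classical isotropic minimal surface in $\R^4$.

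I expect the main obstacle to be the recovery step: carrying out the explicit Iwasawa normalization together with the bookkeeping of the iterated integral in $P_2$, so that everything assembles into the exact six-component expressions \eqref{eq-willmore in s4-y-1} and \eqref{eq-willmore in s4-y-2}, and verifying that all spurious powers of $\lambda$ cancel except for the $R_\lambda$ acting in the normal plane. The nilpotency makes each individual computation finite, but matching the precise constants, signs and the conjugation symmetry between $Y_1$ and its dual is where the real work lies.
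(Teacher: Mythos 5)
This theorem is not proved in the paper at all: it is imported verbatim from the reference \cite{Wang-S4} (``in preparation''), so there is no in-paper argument to measure your proposal against. Judged on its own terms, your strategy is the natural one for this family of results and your key structural claims do check out: for $\hat B_1=(\mathbf h\ \ i\mathbf h)$ one has $\hat B_1(w_i)\hat B_1(w_j)^t=\mathbf h_i\mathbf h_j^t+(i\mathbf h_i)(i\mathbf h_j)^t=0$ identically (even without the isotropy condition), so every triple product $\eta_{-1}(w_1)\eta_{-1}(w_2)\eta_{-1}(w_3)$ vanishes, the Picard series for $F_-$ truncates at $\lambda^{-2}$, and the surviving $\lambda^{-2}$ term is the simplex integral of the symmetric kernel $-\hat B_1(w_1)^tI_{1,3}\hat B_1(w_2)$, which indeed collapses to polynomials in the primitives $h_j$. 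Likewise, reducing $\mathbf v^tI_{1,3}\hat B_1\equiv0$ to the single complex linear relation $\mathbf v^tI_{1,3}\mathbf h=0$ and sorting the cases by the sign of $\langle\mathbf v,\mathbf v\rangle$ via Theorem \ref{thm-minimal} is exactly how items (2) and (3) must go.

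Two caveats. First, what you have written is a plan rather than a proof: the entire content of \eqref{eq-willmore in s4-y-1} and \eqref{eq-willmore in s4-y-2} lives in the explicit Iwasawa factorization of $I+\lambda^{-1}P_1+\lambda^{-2}P_2$ and the extraction of $Y=\tfrac{1}{\sqrt2}F(\mathbf e_1-\mathbf e_2)$ and $N=\tfrac{1}{\sqrt2}F(\mathbf e_1+\mathbf e_2)$, and you defer precisely that step; without carrying it out you have not verified the stated six-component formulas, the isotropy of $[Y_\lambda]$, or the form of $R_\lambda$. Second, your treatment of item (1) has a genuine gap: a single lightlike vector $\mathbf v$ gives one complex linear equation among $h_1',\dots,h_4'$, which cannot by itself force the two conditions $h_3'=h_4'=0$ (e.g.\ $\mathbf v=(1,-1,0,0)^t$ yields only one relation between $h_2'$ and $h_3'$). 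The ``only if'' direction of (1) must instead use the hypothesis that the dual $\hat Y_\lambda$ degenerates to a point --- equivalently, that the $|h_3'|^2$ and $h_1'\bar h_3'$ blocks of \eqref{eq-willmore in s4-y-2} are forced to vanish --- which is the loop-group counterpart of the classical fact that a minimal surface in $\R^4$ has constant conformal transform. With that repair, and with the Iwasawa computation actually performed, your outline would become a correct proof along what is surely the intended route.
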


	\subsection{$S^1-$dressing action on isotropic surfaces in $\SSS^{4}$}
	Recall that for a isotropic surface in $\SSS^{4}$, its normalized potential has the form \cite{Wang-S4}
	\begin{equation}\label{eq-ve}
	\hat B_1=\left(\mathbf{h}\ \  i\mathbf{h}\right), \hbox{ with } \mathbf h=\frac{1}{2}\left(
	\begin{array}{cccc}
	i(h_3'-h_2')  \\
	i(h_3'+h_2')  \\
	h_4'-h_1'     \\
	i(h_4'+h_1')     \\
	\end{array}
	\right)\end{equation}
	Then we have
	\[T_{1,t}\mathbf h=\widetilde{\mathbf h}=\left(
	\begin{array}{cccc}
	i(\widetilde{h_3}'-\widetilde{h_2}')\\
	i(\widetilde{h_3}'+\widetilde{h_2}')\\
	h_4'-h_1' \\
	i(h_4'+h_1')    \\
	\end{array}
	\right), \hbox{ with }
	\widetilde{h_2}=e^{-it}h_2,\ \widetilde{h_3}=e^{it}h_3.
	\]
	By Theorem \ref{th-min-sn} and \ref{th-min-hn}, when $h_2=h_3$, we obtain a minimal surface in $\SSS^{4}$. When $  \widetilde{h_2}=  -\widetilde{h_3}$  we obtain a minimal surface in $\hh ^4$.
	\begin{proposition}\label{prop-s41} We retain the notions in Theorem \ref{th-willmore-iso-formula}. Assume now furthermore that $h_2=h_3$ in \eqref{eq-ve} and set $y_t=T_t\sharp y$, with $T_t=diag(T_{1,t}, I_2)$. Then
		\begin{enumerate}
			\item $y_t$ is conformally equivalent to a minimal surface in $\SSS^{4}$ if and only if $t=0$ or $\pi$;
			\item $y_t$ is conformally equivalent to a minimal surface in $\hh ^4$ if and only if $t=\frac{\pi}{2}$ or $\frac{3\pi}{2}$;
			\item $y_t$ is not conformally equivalent to any minimal surface in any space form for any $t\in (0,2\pi)$ and $t\not\in\{\frac{\pi}{2},\pi,\frac{3\pi}{2}\}$.
		\end{enumerate}
	\end{proposition}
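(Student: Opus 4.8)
The plan is to read off minimality directly from the dressed normalized potential via Theorem~\ref{thm-minimal}. By Proposition~\ref{prop-dr} applied with $T_2=I_2$, the normalized potential of $y_t=T_t\sharp y$ is obtained from that of $y$ by replacing $\hat B_1=(\mathbf h\ \ i\mathbf h)$ with $T_{1,t}\hat B_1=(T_{1,t}\mathbf h\ \ i\,T_{1,t}\mathbf h)$; this preserves the isotropy $\hat B_1^tI_{1,3}\hat B_1=0$ because $T_{1,t}\in SO(1,3,\C)$. Imposing $h_2=h_3$ in \eqref{eq-ve} kills the first entry of $\mathbf h$, and a short computation gives
\[
T_{1,t}\mathbf h=\begin{pmatrix}-\sin t\,h_2'\\ i\cos t\,h_2'\\ \tfrac12(h_4'-h_1')\\ \tfrac{i}{2}(h_4'+h_1')\end{pmatrix}.
\]
Since the second column is just $i$ times the first, Theorem~\ref{thm-minimal} says that $y_t$ is conformally equivalent to a minimal surface in a space form precisely when there is a non-zero real $\mathbf v=(v_1,v_2,v_3,v_4)^t\in\R^4_1$ with $\mathbf v^tI_{1,3}(T_{1,t}\mathbf h)\equiv0$, the form being $\SSS^4$, $\R^4$ or $\hh^4$ according as $\mathbf v^tI_{1,3}\mathbf v$ is negative, zero or positive. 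This is the isotropic, $n=2$ incarnation of Theorem~\ref{th-min-sn}, but working with $\mathbf v$ explicitly will also yield the sharp exclusivity asserted in (1)--(3).

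Carrying out the contraction (recall $I_{1,3}=\mathrm{diag}(-1,1,1,1)$) and collecting terms, the condition $\mathbf v^tI_{1,3}(T_{1,t}\mathbf h)\equiv0$ becomes
\[
(v_1\sin t+iv_2\cos t)\,h_2'+\tfrac12(-v_3+iv_4)\,h_1'+\tfrac12(v_3+iv_4)\,h_4'\equiv0.
\]
The decisive step is the linear independence of $h_1',h_2',h_4'$ over $\C$. Here $h_1'h_2'\not\equiv0$, and the isotropy relation $h_1'h_4'+h_2'h_3'=0$ becomes $h_1'h_4'=-(h_2')^2$ once $h_2=h_3$; a direct check then shows that any non-trivial constant relation $a\,h_1'+b\,h_2'+c\,h_4'=0$ forces $h_1'/h_2'$ to be constant and hence all three functions to be proportional, i.e.\ $\mathbf h$ to take values in a fixed complex line, contradicting the fullness of the isotropic surface $y$. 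Granting independence, all three coefficients above must vanish; since $v_3,v_4\in\R$, the relation $v_3+iv_4=0$ already gives $v_3=v_4=0$, and splitting $v_1\sin t+iv_2\cos t=0$ into real and imaginary parts leaves exactly
\[
v_3=v_4=0,\qquad v_1\sin t=0,\qquad v_2\cos t=0.
\]

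It remains to solve this system for each $t\in[0,2\pi)$ and record the causal type of the surviving $\mathbf v$. For $t=0$ and $t=\pi$ one has $\sin t=0$, so $v_2=0$ while $v_1$ is free; the admissible vectors are the non-zero multiples of $(1,0,0,0)^t$, which are timelike, and $y_t$ is minimal in $\SSS^4$ and in no other space form. For $t=\tfrac{\pi}{2}$ and $t=\tfrac{3\pi}{2}$ one has $\cos t=0$, so $v_1=0$ while $v_2$ is free; the admissible vectors are the non-zero multiples of $(0,1,0,0)^t$, which are spacelike, and $y_t$ is minimal in $\hh^4$ only. For every remaining $t$ both $\sin t$ and $\cos t$ are non-zero, forcing $v_1=v_2=0$ and hence $\mathbf v=0$, so $y_t$ is not conformally equivalent to a minimal surface in any space form. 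This establishes (1), (2) and (3) at once; note that on the admissible set $\mathbf v^tI_{1,3}\mathbf v\neq0$, so the lightlike case $\R^4$ never occurs. The only genuine obstacle in the argument is the linear independence of $h_1',h_2',h_4'$, equivalently the fullness of $y$; once that is in hand the rest is the bookkeeping of a rank-three vanishing condition tested against a two-parameter family of real candidate vectors.
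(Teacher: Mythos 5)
Your proof is correct and takes essentially the same route as the paper, which establishes the proposition by computing the dressed potential (equivalently $\widetilde{h_2}=e^{-it}h_2$, $\widetilde{h_3}=e^{it}h_3$) and appealing to Theorems \ref{th-min-sn} and \ref{th-min-hn}, whose proofs consist of exactly your argument: test $\mathbf v^tI_{1,3}T_{1,t}\hat B_1\equiv0$ against real $\mathbf v$, use linear independence of the component functions to force $v_3=v_4=0$, $v_1\sin t=0$, $v_2\cos t=0$, and read off the causal type of the surviving $\mathbf v$. The linear independence of $h_1',h_2',h_4'$ that you flag is indeed the one nondegeneracy hypothesis the argument needs; the paper assumes it implicitly (cf.\ the standing assumption in Theorem \ref{th-min-sn} that the relevant meromorphic functions are linearly independent), and your explicit derivation of it from $h_1'h_4'=-(h_2')^2$ and fullness is a correct supplement rather than a deviation.
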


	\begin{proposition} We retain the notions in Theorem \ref{th-willmore-iso-formula} and set  $T_t=diag(T_{2,t}, I_2)$.
		\begin{enumerate}
			\item  Assume  that $h_2=h_3$ and $y_t=T_t\sharp y$. Then $y_t$ is conformally equivalent to a minimal surface in $\SSS^{4}$ for any $t\in\R$.
			\item  Assume  that $h_2=-h_3$ and $y_t=T_t\sharp y$. Then $y_t$ is conformally equivalent to a minimal surface in $\hh^4$ for any $t\in\R$.
		\end{enumerate}
	\end{proposition}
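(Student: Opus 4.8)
The plan is to deduce both statements from the minimality criterion of Theorem~\ref{thm-minimal}, tracking a single constant vector through the dressing. First I would record the effect of $T_t=\mathrm{diag}(T_{2,t},I_2)$ on the normalized potential: by Proposition~\ref{prop-dr} the potential of $y_t=T_t\sharp y$ is obtained from that of $y$ by replacing $\hat B_1$ with $T_{2,t}\hat B_1$ (the $I_2$ block acts trivially on the right). Because $T_{2,t}\in SO(1,3,\C)$ one has $\hat B_1^tT_{2,t}^tI_{1,3}T_{2,t}\hat B_1=\hat B_1^tI_{1,3}\hat B_1=0$, so the isotropic Willmore structure is preserved; concretely the action sends $(h_1',h_2',h_3',h_4')\mapsto(e^th_1',h_2',h_3',e^{-t}h_4')$, which leaves the compatibility relation $h_1'h_4'+h_2'h_3'=0$ intact since $e^te^{-t}=1$.

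The key step is the transformation of the vector witnessing minimality. For the isotropic potential $\hat B_1=(\mathbf{h}\ \ i\mathbf{h})$ the condition $\mathbf{v}^tI_{1,3}\hat B_1=0$ of Theorem~\ref{thm-minimal} collapses to the single scalar equation $\mathbf{v}^tI_{1,3}\mathbf{h}=0$. If $\mathbf{v}$ works for $y$, then using $T_{2,t}^tI_{1,3}T_{2,t}=I_{1,3}$, hence $I_{1,3}T_{2,t}=(T_{2,t}^{-1})^tI_{1,3}$, one gets $(T_{2,t}\mathbf{v})^tI_{1,3}(T_{2,t}\mathbf{h})=\mathbf{v}^tI_{1,3}\mathbf{h}=0$; so $T_{2,t}\mathbf{v}$ witnesses minimality for $y_t$, with unchanged causal type $\langle T_{2,t}\mathbf{v},T_{2,t}\mathbf{v}\rangle=\langle\mathbf{v},\mathbf{v}\rangle$.

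It then remains to choose $\mathbf{v}$ and note that it is fixed by $T_{2,t}$. For (1), $h_2=h_3$ makes the first entry $\tfrac{i}{2}(h_3'-h_2')$ of $\mathbf{h}$ vanish, so $\mathbf{v}=(1,0,0,0)^t$ satisfies $\mathbf{v}^tI_{1,3}\mathbf{h}=0$ and is timelike, $\mathbf{v}^tI_{1,3}\mathbf{v}=-1<0$; by Theorem~\ref{thm-minimal} this realizes $y$ as a minimal surface in $\SSS^4$. Since $T_{2,t}$ acts as the identity on the first two coordinates, $T_{2,t}\mathbf{v}=\mathbf{v}$ stays real and timelike, so $y_t$ is minimal in $\SSS^4$ for every $t\in\R$. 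For (2), $h_2=-h_3$ makes the second entry $\tfrac{i}{2}(h_3'+h_2')$ of $\mathbf{h}$ vanish, so $\mathbf{v}=(0,1,0,0)^t$ works and is spacelike, $\mathbf{v}^tI_{1,3}\mathbf{v}=1>0$, giving minimality in $\hh^4$; again $T_{2,t}\mathbf{v}=\mathbf{v}$ remains real and spacelike for all $t$.

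I expect the only delicate point to be the transformation rule $\mathbf{v}\mapsto T_{2,t}\mathbf{v}$ — in particular applying $I_{1,3}T_{2,t}=(T_{2,t}^{-1})^tI_{1,3}$ on the correct side so that the causal character is read off from the unchanged vector, and observing that the reality of $\mathbf{v}$ demanded by Theorem~\ref{thm-minimal} survives precisely because $T_{2,t}$ fixes the $e_0$- and $e_1$-axes pointwise (a vector with nonzero third or fourth coordinate would be rendered complex by the entries $i\sinh t$). Everything else is the same bookkeeping as in Theorem~\ref{th-min-sn} and Theorem~\ref{th-min-hn}.
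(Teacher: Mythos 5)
Your proposal is correct and follows essentially the same route as the paper: the paper derives this proposition from its $\R^1$-dressing theorem in Section 4.3, whose proof is precisely to apply Theorem \ref{thm-minimal} with the fixed vectors $\mathbf{v}=(1,0,0,0)^t$ and $(0,1,0,0)^t$, which $T_{2,t}$ leaves invariant. Your additional bookkeeping (the collapse of $\mathbf{v}^tI_{1,3}\hat B_1=0$ to $\mathbf{v}^tI_{1,3}\mathbf{h}=0$ for the isotropic potential, and the identification of which entry of $\mathbf{h}$ vanishes under $h_2=\pm h_3$) is accurate and consistent with \eqref{eq-ve}.
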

	
	\subsection{The Veronese sphere and its  Willmore deformations}

	Applying to the Veronese surface in $\SSS^{4}$, we obtain many new Willmore two-spheres in $\SSS^{4}$ with the same Willmore energy. Moreover, we also obtain many examples of minimal surfaces in $\hh^4$ with Willmore energy taking every value in $(0,2\pi)$.
	\subsubsection{The Veronese sphere and its $S^1-$Willmore deformations}
	\begin{proposition}\label{prop-h41}
		Let $z=re^{i\theta}$. Set
		\begin{equation}\label{eq-f}
		h_1=-2z^3,\ h_2=\sqrt{3}iz^2,\ h_3=\sqrt{3}iz^2, \ h_4=-2z,
		\end{equation}
		in \eqref{eq-ve}. Let $[Y]$ be the corresponding Willmore surface in $S^4$.
		Set $Y_t=T_t\sharp Y$ with $T_t=diag(T_{1,t}, I_2)$. Then
		\begin{equation}\label{eq-ve-y}
		Y_t=\left(
		\begin{array}{ccccc}
		r^4+2r^2+1\\
		-r^4+4r^2-1\\
		\frac{\sqrt{3}\left({z}{e^{-it}}+\bar ze^{it}-r^4\left(ze^{it}+{\bar z}{e^{-it}}\right)\right)}{1+r^2}\\
		\frac{-i\sqrt{3}\left({z}{e^{-it}}-\bar ze^{it}-r^4\left(ze^{it}-{\bar z}{e^{-it}}\right)\right)}{1+r^2}\\
		\frac{\sqrt{3}\left({z^2}{e^{-it}}+\bar z^2e^{it}+r^2\left(z^2e^{it}+{\bar z^2}{e^{-it}}\right)\right)}{1+r^2} \\
		\frac{i\sqrt{3}\left({z^2}{e^{-it}}-\bar z^2e^{it}+r^2\left(z^2e^{it}-{\bar z^2}{e^{-it}}\right)\right)}{1+r^2} \\
		\end{array}
		\right)=\left(
		\begin{array}{ccccc}
		r^4+2r^2+1\\
		-r^4+4r^2-1\\
		\frac{2\sqrt{3}r\left(\cos(\theta-t)-r^4\cos(\theta+t)\right)}{1+r^2}\\
		\frac{2\sqrt{3}r\left(\sin(\theta-t)-r^4\sin(\theta+t)\right)}{1+r^2}\\
		\frac{2\sqrt{3}r^2\left(\cos(2\theta-t)+r^2\cos(2\theta+t)\right)}{1+r^2}\\
		\frac{-2\sqrt{3}r^2\left(\sin(2\theta-t)+r^2\sin(2\theta+t)\right)}{1+r^2}\\
		\end{array}
		\right).
		\end{equation}
		and $y_t=[Y_t]:S^2\rightarrow S^4$ is an isotropic Willmore immersion with
		\begin{equation}\label{eq-ve-yt}
		y_t= \frac{1}{(r^2+1)^3}\left(
		\begin{array}{ccccc}
		-r^6+3r^4+3r^2-1\\
		2\sqrt{3}r\left(\cos(\theta-t)-r^4\cos(\theta+t)\right)\\
		{2\sqrt{3}r\left(\sin(\theta-t)-r^4\sin(\theta+t)\right)} \\
		2\sqrt{3}r^2\left(\cos(2\theta-t)+r^2\cos(2\theta+t)\right) \\
		-2\sqrt{3}r^2\left(\sin(2\theta-t)+r^2\sin(2\theta+t)\right)\\
		\end{array}
		\right)
		\end{equation}
		and
		\begin{equation}\label{eq-yt-metric}
		|\dd y_t|^2= \frac{12(r^8 + 4r^6 +6r^4\cos 2t + 4r^2 + 1)}{(r^2+1)^6}|\dd z|^2.
		\end{equation}
		\begin{enumerate}
			\item $W([Y_t])=8\pi$ for all $t\in[0,2\pi]$. $[Y_t]$ is conformally equivalent to $[Y_{t+\pi}]$ for all $t\in[0,\pi]$. And for any $t_1,t_2\in[0,\pi)$, $[Y_{t_1}]$ is conformally equivalent to $[Y_{t_2}]$ if and only if $t_1=t_2$ or $t_1+t_2=\pi$.
			\item $[Y_t]$ is conformally equivalent to the Veronese surface in $\SSS^{4}$ when $t=0$ and  $[Y_t]$ is conformally equivalent to three complete minimal surfaces in $\hh^4$ on three open subsets of $S^2$ when $t=\frac{\pi}{2}$. For any  other $t\in(0,\pi)$, $[Y_t]$ is a Willmore surface in $\SSS^{4}$ not minimal in any space form.
			\item When $t=\frac{3\pi}{2}$, consider the projection of $[(Y_t)|_{t=\frac{3\pi}{2}}]$ into $\hh^4$ w.r.t $(0,1,0,0,0,0)^t\in\R^6_1$:
			\begin{equation}\label{eq-ve-y-hmin}
			\widetilde{y}=\frac{-1}{(1+r^2)(r^4-4r^2+1)}\left(
			\begin{array}{cccccc}
			(1+r^2)^3\\
			\sqrt{3}i(z-\bar{z} ) (1+r^4 )  \\
			\sqrt{3}(z+\bar{z} ) (1+r^4 )  \\
			\sqrt{3}i(z^2-\bar{z}^2 )(1-r^2 )  \\
			{-\sqrt{3}(z^2+\bar{z}^2 )(1-r^2 )} \\
			\end{array}
			\right).
			\end{equation}
			It has metric
			\[|\dd\widetilde{y}|^2=\frac{12(r^8+4r^6-6r^4+4r^2+1)}{(r^2+1)^2(r^4-4r^2+1)^2}|\dd z|^2\]
			and Gauss curvature
			\begin{equation}\label{eq-ve-y-hmin-K}
			\begin{split}
			K=&-1-\frac{2}{3}\frac{(r^2+1)^4(r^4-4r^2+1)^4}{(r^8+4r^6-6r^4+4r^2+1)^3}
			\end{split}\end{equation}
			on $S^2\setminus \{|z|=r_1\}\cup\{|z|=r_2\}$. Here $r_1=\frac{\sqrt{6}-\sqrt{2}}{2}$ and $r_2=\frac{\sqrt{6}+\sqrt{2}}{2}$. Set
			\[
			M_1=\{z\in \overline{\mathbb C}\ |\ |z| <r_1\},\ M_2=\{z\in \overline{\mathbb C}\ |\ r_1<|z| <r_2\},\ M_3=\{z\in \overline{\mathbb C}\ |\ |z|>r_2\}.
			\]
			
			\begin{enumerate}
				\item Set $\mu(z):=-\frac{1}{\bar{z}}$ on $S^2$. Then
				\[\widetilde{y}\circ \mu=R\widetilde{y},~ \hbox{  with } R=\hbox{diag}(1,-1,-1,1,1).
				\]

				\item   $\widetilde y|_{M_1}:M_1\rightarrow \hh^4$ is a proper, complete minimal disk with finite Willmore energy $(4-2\sqrt{3})\pi$. Its Gauss curvature takes value in $[-\frac{5}{3},-1)$. In particular, it has bounded Gauss curvature.  And $\widetilde y|_{M_3}$ is congruent to $\widetilde y|_{M_1}$ in the sense  $\widetilde y|_{M_3}=R(\widetilde y\circ\sigma)|_{M_1}$.
				\item  $\widetilde y|_{M_2}:M_2\rightarrow \hh^4$ is a proper, complete minimal annulus with  finite Willmore energy $4\sqrt 3 \pi$. Its Gauss curvature takes value in $[-\frac{11}{3},-1)$. In particular, it has bounded Gauss curvature.
				\item Each of the three minimal surfaces intersects the infinite boundary of $\hh^4$  with a constant angle $<\frac{\pi}{2}$. The circles $r=\frac{\sqrt{6}\pm\sqrt{2}}{2}$ are the umbilical sets of the Willmore immersion $[Y_{t}|_{t=\frac{3\pi}{2}}]$.
			\end{enumerate}\end{enumerate}
		\end{proposition}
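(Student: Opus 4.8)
The entire statement is a computation built on the Weierstrass representation of Theorem~\ref{th-willmore-iso-formula}. The plan is first to record that, as explained before Proposition~\ref{prop-s41}, the dressing $T_t=\mathrm{diag}(T_{1,t},I_2)$ acts on the data \eqref{eq-ve} by $h_2\mapsto e^{-it}h_2$, $h_3\mapsto e^{it}h_3$, fixing $h_1,h_4$. Substituting $h_1=-2z^3$, $h_2=h_3=\sqrt3\,iz^2$ (so that $\widetilde{h_2}=\sqrt3\,iz^2e^{-it}$, $\widetilde{h_3}=\sqrt3\,iz^2e^{it}$) and $h_4=-2z$ into \eqref{eq-willmore in s4-y-1} and collecting terms produces the representative $Y_t$ of \eqref{eq-ve-y}; dividing the last five entries by $y_0=(1+r^2)^2$ and clearing one factor $1+r^2$ gives \eqref{eq-ve-yt}, and differentiating yields the conformal factor \eqref{eq-yt-metric}. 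The one nontrivial point at this stage is that $y_t$ is an unbranched immersion of all of $S^2$: writing the bracket in \eqref{eq-yt-metric} as $u^4+4u^3+6u^2\cos2t+4u+1$ with $u=r^2$, its minimum over $t$ occurs at $\cos2t=-1$, where it equals $(u^2+1)^2+4u(u-1)^2>0$; hence the metric factor is strictly positive for every $t$, and the behaviour at $z=\infty$ is regular in the chart $w=1/z$.

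For part (1) I would avoid integrating $|\kappa|^2$ and instead argue by quantization: each $y_t$ is an unbranched isotropic Willmore immersion of $S^2$, so $W([Y_t])\in4\pi\mathbb Z$ by \cite{Bryant1984,Mon}; since $t\mapsto W([Y_t])$ is continuous it is constant, and its value is that of the Veronese sphere at $t=0$, namely $8\pi$. The equivalence $[Y_t]\cong[Y_{t+\pi}]$ is immediate because $T_{1,\pi}=\mathrm{diag}(-1,-1,1,1)\in SO(1,3)$ is real, so the extra dressing factor is a genuine isometry of $\SSS^4$; the equivalence $[Y_t]\cong[Y_{\pi-t}]$ I would read off directly from \eqref{eq-ve-yt} as the relation $y_{\pi-t}(\bar z)=\mathrm{diag}(1,-1,1,-1,1)\,y_t(z)$, i.e. complex conjugation of the domain followed by a reflection of $\SSS^4$. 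The delicate half is the ``only if'': a conformal equivalence need only match the conformal classes of the induced metrics, which are all the standard one on $S^2$, so the dependence of \eqref{eq-yt-metric} on $\cos2t$ alone is not by itself conclusive. I would instead compare the conformally invariant measures $\tfrac i2|\kappa_t|^2\,\dd z\wedge\dd\bar z$ modulo the conformal automorphism group of $S^2$ and show they separate the parameters up to $t\leftrightarrow\pi-t$; I expect this rigidity step to be the main obstacle of part (1).

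Part (2) is then essentially formal. Since $h_2=h_3$, Proposition~\ref{prop-s41} yields minimality in $\SSS^4$ exactly at $t=0,\pi$ and in $\hh^4$ exactly at $t=\tfrac\pi2,\tfrac{3\pi}2$, with no minimality in any space form otherwise; that $t=0$ is the Veronese sphere is the defining feature of the data \eqref{eq-f} (cf. \cite{Wang-S4}). The splitting into three pieces at $t=\tfrac\pi2$ comes from the affine chart of $\hh^4$ breaking down on the locus where the relevant coordinate vanishes; reading \eqref{eq-ve-y} one sees this locus is $y_1=-r^4+4r^2-1=0$, i.e. $r^2=2\pm\sqrt3$, independent of $t$.

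Finally, for part (3) I would project $[Y_{3\pi/2}]$ from the spacelike vector $(0,1,0,0,0,0)^t$, i.e. normalize by the coordinate $y_1$, to obtain \eqref{eq-ve-y-hmin}; the stated metric and the Gauss curvature \eqref{eq-ve-y-hmin-K} follow by differentiation, and the singular circles are exactly $r^4-4r^2+1=0$, matching $r_1=\tfrac{\sqrt6-\sqrt2}2$, $r_2=\tfrac{\sqrt6+\sqrt2}2$. The symmetry $\widetilde y\circ\mu=R\widetilde y$ with $\mu(z)=-1/\bar z$ is a direct substitution, and it gives $\widetilde y|_{M_3}\cong\widetilde y|_{M_1}$. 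Completeness and properness of each piece follow from the metric blowing up like $(r^4-4r^2+1)^{-2}$ at $r_1,r_2$, while the curvature bounds $K\in[-\tfrac53,-1)$ on $M_1$ and $K\in[-\tfrac{11}3,-1)$ on $M_2$ are a one-variable extremum analysis of \eqref{eq-ve-y-hmin-K}, the extrema occurring at $r=0$ and $r=1$ respectively and the value $-1$ being approached at the ends. For the energies I would use conformal invariance of $W$ together with the partition of $S^2$: $W(M_1)+W(M_2)+W(M_3)=8\pi$ and $W(M_1)=W(M_3)$, so a single explicit $r$-integral (the integrand being rotationally symmetric) pins down $W(M_1)=(4-2\sqrt3)\pi$ and hence $W(M_2)=4\sqrt3\,\pi$. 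The remaining geometric assertions, that each end meets $\SSS^3_\infty$ at a constant angle $<\tfrac\pi2$ and that $r=r_{1,2}$ is the umbilic set of $[Y_{3\pi/2}]$, I would obtain from the asymptotics of $\widetilde y$ near the ends and from the vanishing pattern of the conformal Hopf differential; this asymptotic analysis is the second place where genuine work beyond routine computation is needed.
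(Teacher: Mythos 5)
Most of your outline coincides with the paper's own argument: the formula \eqref{eq-ve-y} is obtained by substituting the dressed data into \eqref{eq-willmore in s4-y-1}; the value $W([Y_t])=8\pi$ is obtained exactly as you propose, by combining continuity in $t$ with the $4\pi\mathbb{Z}$ quantization of \cite{Mon} and the known value for the Veronese sphere at $t=0$; part (2) is read off from Theorem \ref{th-min-sn}; and part (3) is, as in the paper, a direct (if lengthy) computation, with properness coming from the smooth boundary curves at infinity. Your bookkeeping for the energies via $W(M_1)+W(M_2)+W(M_3)=8\pi$ and $W(M_1)=W(M_3)$ is consistent with the stated values and is a legitimate shortcut.

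The genuine gap is the ``only if'' direction of the classification in part (1), which you correctly identify as the delicate step but then leave unexecuted: you propose to compare the conformally invariant densities $\tfrac{i}{2}|\kappa_t|^2\,\dd z\wedge\dd\bar z$ modulo ${\rm Aut}(S^2)$ and ``show they separate the parameters,'' without carrying this out. The paper closes this step by a symmetry argument you do not mention: each $[Y_t]$ carries the explicit $S^1$-symmetry $Y_t(ze^{i\tilde t},\bar z e^{-i\tilde t})=R_{\tilde t}Y_t$ with $R_{\tilde t}={\rm diag}(I_2,R_{\tilde t},R_{2\tilde t})$, and for $t\in(0,\pi)$ this symmetry is \emph{unique}, since a second independent $S^1$-symmetry would make $[Y_t]$ a homogeneous Willmore two-sphere, hence conformally the Veronese sphere by the classification in \cite{MPW,DW-h}, a contradiction. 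Uniqueness of the symmetry forces any conformal equivalence between $[Y_{t_1}]$ and $[Y_{t_2}]$ to intertwine the rotational actions and hence to be an isometry of the induced metrics; the dependence of \eqref{eq-yt-metric} on $\cos 2t$ alone then gives $t_1=t_2$ or $t_1+t_2=\pi$. Without this (or a completed version of your density argument, which would still have to quotient by the full M\"obius group of $S^2$ and not just by rotations), the ``only if'' assertion of part (1) remains unproved in your write-up.
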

		\begin{proof} The equation \eqref{eq-ve-y} is a direct application of Theorem \ref{th-min-sn} and Theorem \ref{th-willmore-iso-formula}.
			When $t=0$, we see that $y_t|_{t=0}$ is a minimal immersion with constant curvature $1/3$, hence it is the Veronese surface. It is well-known that Veronese two-sphere have Willmore energy $8\pi$. Since the Willmore energy of $[Y_t]$ depends smoothly on $t$ and the Willmore energy of a Willmore two-sphere is $4\pi m$ for some $m\in Z$ \cite{Mon}, we see that $W([Y_t])=8\pi$.
			
			Substituting  $t+\pi$ into  \eqref{eq-ve-y} we see that $[Y_t]$ is conformally equivalent to $[Y_{t+\pi}]$. By  Theorem \ref{th-min-sn}, we see that (2) holds. From \eqref{eq-ve-y} we see that for any $[Y_t]$, it admits an $S^1-$symmetry given by $R_{\tilde{t}}=\hbox{diag}(I_2,R_{\tilde t}, R_{2\tilde t})$. Here
			\[R_{\tilde t}=\left(
			\begin{array}{cc}
			\cos \tilde t & -\sin \tilde t \\
			\sin \tilde t& \cos \tilde t \\
			\end{array}
			\right),\ R_{2\tilde t}=\left(
			\begin{array}{cc}
			\cos 2\tilde t &  \sin 2\tilde t \\
			- \sin 2\tilde t& \cos 2\tilde t \\
			\end{array}
			\right).
			\]
			To be concrete, we have $Y_t(ze^{i\tilde t}, \bar z e^{-i\tilde t})=R_{\tilde{t}} Y_t$.
			Moreover, for any $t\in(0,\pi)$, $[Y_t]$ does not admit another $S^1-$symmetry. Otherwise, we will see that $[Y_t]$ is a homogeneous Willmore two-sphere since it has two different $S^1-$symmetry. By \cite{MPW,DW-h},  it is conformally equivalent to the Veronese two-sphere, which is not possible. Therefore, $[Y_{t_1}]$ is conformally equivalent to $[Y_{t_2}]$ only if $y_{t_1}$ is isometric to $y_{t_2}$, which by \eqref{eq-yt-metric}, if and only if $t_1=t_2$ or $t_1+t_2=\pi$. By \eqref{eq-ve-y}, $[Y_{t_1}]$ is conformally equivalent to $[Y_{t_2}]$ if $t_1+t_2=\pi$. This finishes (1).
			And (2) comes from Theorem \ref{th-min-sn}.

			(3) comes from a lengthy  but straightforward computation. Note that the properness of $\widetilde{y}|_{M_j}$, $j=1,2,3$ comes from the fact that they have smooth boundary curves at infinity.
		\end{proof}
		\begin{remark}\
			\begin{enumerate}
				\item  Note that K attains maximal value $-1$ at $r=\frac{\sqrt{6}\pm\sqrt{2}}{2}$ and attains  minimal value $-\frac{11}{3}$ at $r=1$ (FIGURE 1). This means that the two circles $r=\frac{\sqrt{6}\pm\sqrt{2}}{2}$ on $S^2=\bar\C$ are exactly the umbilical sets on the Willmore surface $[Y_t]|_{t=\frac{3\pi}{2}}$ (Compare also \cite{Ba-Bo}).
				\begin{figure}[h]
					\centering
					\includegraphics[width=0.40\textwidth]{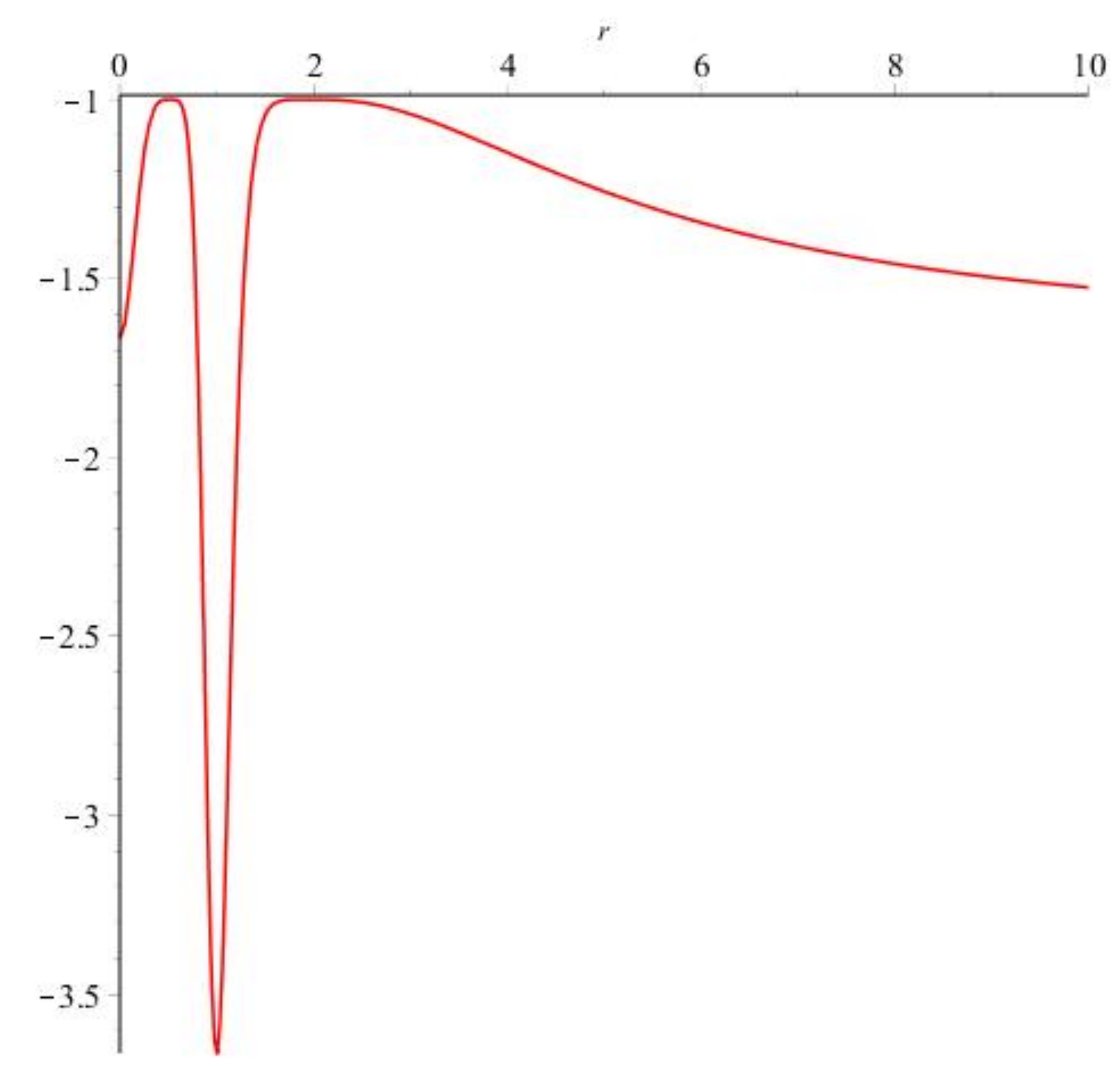}
					\caption{Curvature of $\widetilde{y}$}\label{fig:digit}
				\end{figure}
				\item
				The surface $[\widetilde{Y}_{t=\frac{3\pi}{2}}]$ can be looked as a combination of three complete minimal surfaces $\widetilde{y}|_{M_j}$ in $\mathbb H^4$, with $j=1,2,3$. To be concrete, when $|z|<\frac{\sqrt{6}-\sqrt{2}}{2}$, the surface takes values in the upper connected component of $\mathbb H^4$, and tends to the boundary of $\mathbb H^4$ when $|z|\rightarrow\frac{\sqrt{6}-\sqrt{2}}{2}$ from the left side. When $|z|=\frac{\sqrt{6}-\sqrt{2}}{2}$, it  takes values at the boundary of $\mathbb H^4$. When   $\frac{\sqrt{6}-\sqrt{2}}{2}<|z|<\frac{\sqrt{6}+\sqrt{2}}{2}$, it takes values in the lower connected component of $\mathbb H^4$, and tends to the boundary of $\mathbb H^4$ again when $|z|\rightarrow\frac{\sqrt{6}+\sqrt{2}}{2}$ from the right side. When $|z|=\frac{\sqrt{6}+\sqrt{2}}{2}$, it  takes values at the boundary of $\mathbb H^4$ again. When $|z|>\frac{\sqrt{6}+\sqrt{2}}{2}$, it again takes values in the upper connected component of $\mathbb H^4$. When viewing the surface in $\mathbb H^4$, it blows up at the points $\frac{\sqrt{6}\pm\sqrt{2}}{2}$. If we embed $\mathbb H^4$  conformally into $\SSS^{4}$, the surface will be a smooth immersion on the whole $S^2$. This is the well-known construction of compact Willmore surfaces due to Babich and Bobenko \cite{Ba-Bo} for minimal surfaces in $\mathbb H^3$, where they constructed successfully Willmore tori with a umbilical line in $S^3$ via this way.  It is hence not surprising that similar construction also works for Willmore two-spheres. To the authors' best knowledge, the example in Proposition \ref{prop-h41} should be the first explicit example of Willmore two-sphere in $\SSS^{4}$ which is conformally equivalent to some minimal surface in $\mathbb H^4$ on an open subset of $S^2$ (Note that this is not possible for Willmore two-spheres in $S^3$ except the round sphere \cite{Bryant1984}).
				
				\item In \cite{Mon}, it is shown that all Willmore two-spheres with $W([Y_t])=8\pi$ are expressed as twistor deformations of the Veronese surface in $\SSS^{4}$. Here we derive some explicit examples. Moreover, the generating curve of the $S^1-$equivariant Willmore two-sphere  $y_t$, $t\in(0,\frac{\pi}{2})$, in $S^4$ is
				\[\gamma_t=\frac{1}{(r^2+1)^3}\left(
				\begin{array}{ccccc}
				-r^6+3r^4+3r^2-1\\
				2\sqrt{3}r\left(1-r^4 \right)\cos t\\
				- 2\sqrt{3}r\left(1+r^4\right)\sin t \\
				2\sqrt{3}r^2\left(1 +r^2\right)\cos t \\
				2\sqrt{3}r^2\left(1-r^2\right)\sin t\\
				\end{array}
				\right).\]
				So $\gamma_t$ is full in $S^4$ for all $t\in(0,\frac{\pi}{2})$ and  $\gamma_t$ takes value in some $S^2\subset S^4$ when $t=0,\frac{\pi}{2}$. This indicates that in general, $S^1-$equivariant Willmore two-spheres in $S^4$ have more complicated structures than $S^1-$equivariant minimal two-spheres in $S^4$ \cite{Ejiri-equ}.
				
				\item Different from the case of complete minimal surfaces in $\hh^3$ with finite Willmore energy, which always intersect the infinity boundary orthogonally as shown in \cite{AM2015}, here the complete minimal surface $\widetilde{y}$ intersect the infinity boundary with a constant angle not equal to $\frac{\pi}{2}$.
			\end{enumerate} \end{remark}

			\subsubsection{$\R^1-$minimal deformations of the minimal surface $\widetilde y$}
			Let us consider the $\R^1-$minimal deformations of the minimal surface $\widetilde y$  in $\hh^4$ given in  \eqref{eq-ve-y-hmin}, by use of which we obtain a lot of (non-congruent) complete minimal surfaces in $\hh^4$.
			\begin{proposition}\label{prop-h42}
				Let $z=re^{i\theta}$. Set
				\begin{equation}\label{eq-f}
				h_1=-2z^3,\ h_2=\sqrt{3}z^2,\ h_3=-\sqrt{3}z^2, \ h_4=-2z.
				\end{equation}
				in \eqref{eq-ve}. Let $[Y]$ be the corresponding Willmore surface in $S^4$.
				Set $Y_t=T_t\sharp Y$ with $T_t=diag(T_{2,t}, I_2)$. Then
				\begin{equation}\label{eq-ve-y-5}
				Y_t=\left(
				\begin{array}{c}
				y_0 \\
				y_1 \\
				y_2 \\
				y_3 \\
				y_4 \\
				y_5 \\
				\end{array}
				\right)
				=\left(
				\begin{array}{ccccc}
				e^{2t}r^6+3r^4+3e^{2t}r^2+1\\
				-e^{2t}r^6+3r^4+3e^{2t}r^2-1\\
				i\sqrt{3}e^t(1+r^4)(z-\bar z)\\
				\sqrt{3}e^t(1+r^4)(z+\bar z)\\
				i\sqrt{3} (1-e^{2t}r^2)(z^2-\bar z^2)\\
				-\sqrt{3} (1-e^{2t}r^2)(z^2+\bar z^2)\\
				\end{array}
				\right).
				\end{equation}
				\begin{enumerate}
					\item For every $t\in\R$, $[Y_t]$ is a Willmore immersion from $S^2$ to $\SSS^{4}$  with $W([Y_t])=8\pi$ and  $[Y_t]$ is oriented for all $t\in\R$.
					Moreover, $[Y_t(z,\bar z)]$ is conformally equivalent to $[Y_{-t}(-\frac{1}{z},-\frac{1}{\bar z})]$ for all $t\in\R.$
					
					\item Set
					\[y_t=\frac{1}{y_1}\left(
					\begin{array}{ccccc}
					y_0 & y_2 & y_3 & y_4 & y_5 \\
					\end{array}
					\right)^t.\]
					Then $y_t$ is minimally immersed  into $\hh^4$ on the points where $y_0\neq0$, with metric
					\[\begin{split}|(y_t)_z|^2&=\frac{6(e^{2t}r^8 + 4e^{4t}r^6 - 6e^{2t}r^4 + 4r^2+e^{2t})}{(e^{2t}r^6-3r^4-3e^{2t}r^2+1)^2}\\
					&=\frac{6\left(e^{2t}(r^4+1)^2 + 4r^2(e^{2t}r^2-1)^2\right)}{(e^{2t}r^6-3r^4-3e^{2t}r^2+1)^2}
					\end{split}
					\]
					and curvature
					\[K=-1-\frac{2e^{2t}(e^{2t}r^6- 3r^4 - 3e^{2t}r^2 + 1)^4}{3(e^{2t}r^8 + 4e^{4t}r^6 - 6e^{2t}r^4 + 4r^2+e^{2t})^3}\]
					In particular, set
					\[\begin{split}
					&M_{t,1}=\{z\in \overline{\mathbb C}\ |\ |z| <r_1\},\\
					&M_{t,2}=\{z\in \overline{\mathbb C}\ |\ r_1<|z| <r_2\},\\
					&M_{t,3}=\{z\in \overline{\mathbb C}\ |\ |z|>r_2\}.
					\end{split}\]
					Here we denote by $r_1$ and $r_2$ the two positive solutions to $e^{2t}r^6-3r^4-3e^{2t}r^2+1=0$ with $0<r_1<r_2$
					\footnote{Note that $\cos3\theta_0-2\cos (\theta_0+\frac{\pi}{3})=2\sin\theta_0(\sin\frac{\pi}{3}-\sin2\theta_0)>0$ since $0<\theta_0< \pi/6$ for all $t\in\R$.}:
					\[r_1^2=\sqrt{1+e^{-4t}}\left(\cos3\theta_0-2\cos (\theta_0+\frac{\pi}{3})\right),\ r_2^2=\sqrt{1+e^{-4t}}\left(\cos3\theta_0+2\cos \theta_0\right).\]
					Here $\theta_0=\frac{1}{3}\arccos\frac{1}{\sqrt{1+e^{4t}}}.$
					Then we obtain two complete minimal disks $M_{t,1}$, $M_{t,3}$ and one complete minimal annulus $M_{t,2}$ in $\hh^4$.
					\item $[Y_t]|_{M_{t,1}}$ and $[Y_t]|_{M_{t,3}}$ are conformally equivalent to complete immersed, isotropic minimal disks $y_{t,1}$ and $y_{t,3}$ in $\hh^4$.
					Moreover,  $y_{t,1}$ and $y_{t,3}$ are isometrically congruent if and only if $t=0$.  $[Y_t]|_{M_{t,2}}$ is conformally equivalent to an immersed, complete, isotropic minimal annulus $y_{t,2}$ in $\hh^4$.
					\item
					When $t\rightarrow +\infty$, $[Y_t]$ tends to
					a branched double cover of a totally geodesic surface $y_{\infty}$ $\SSS^4$ which is orthogonal to the equator $\SSS^3_0=\{x\in \SSS^4|x\perp (1,0,0,0,0)^t\}$.
					\item When $t=0$, $W([Y_t]|_{M_{t,1}})=W([Y_t]|_{M_{t,3}})=(4-2\sqrt{3})\pi$. When $t\rightarrow +\infty$, $W([Y_t]|_{M_{t,1}})\rightarrow 0$. There exists $t\in\R^-$ such that $W([Y_t]|_{M_{t,1}})>1.9999\pi$.
					Hence for every $c_0\in(0,1.9999\pi]$, there exists some $t\in\R$ such that $W([Y_t]|_{M_{t,1}})=c_0$.
				\end{enumerate}   \end{proposition}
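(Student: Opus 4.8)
The plan is to derive everything from the explicit lift \eqref{eq-ve-y-5} and then read off the geometry region by region. First, by Proposition \ref{prop-dr} the dressing by $T_t=\mathrm{diag}(T_{2,t},I_2)$ conjugates the normalized potential; acting with $T_{2,t}$ on the third and fourth entries of $\mathbf h$ in \eqref{eq-ve} one finds that it merely rescales the Weierstrass data to $(e^{t}h_1,h_2,h_3,e^{-t}h_4)$, which still satisfies $e^{t}h_1'\cdot e^{-t}h_4'+h_2'h_3'=h_1'h_4'+h_2'h_3'=0$. Hence Theorem \ref{th-willmore-iso-formula} applies, and substituting the rescaled data into \eqref{eq-willmore in s4-y-1} (with $R_\lambda|_{\lambda=1}=I$) and simplifying gives \eqref{eq-ve-y-5}. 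For (1), $[Y_t]$ is isotropic and Willmore by Theorem \ref{th-willmore-iso-formula}; it is an unbranched immersion of the whole $S^2$ because its $\SSS^4$ conformal factor has numerator $Q:=e^{2t}r^8+4e^{4t}r^6-6e^{2t}r^4+4r^2+e^{2t}$, and with $u=r^2$, $a=e^{2t}$ the inequality $a^2u^2-au+1>0$ gives $4a^2u^3+4u>4au^2$, whence $Q>a(u^2-1)^2\ge0$ for all $r$; since the map is single-valued across $z=\infty$ on the orientable $S^2$ it is an oriented immersion. Then $W([Y_t])=8\pi$ follows from the quantization $W\in4\pi\mathbb Z$ for Willmore two-spheres \cite{Mon}, continuity in $t$, and the value $8\pi$ at $t=0$ (where $[Y_0]$ is the surface of Proposition \ref{prop-h41}). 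The reparametrization symmetry is a direct substitution: one checks termwise that $r^{6}Y_{-t}(-1/z,-1/\bar z)=e^{-2t}Y_t(z,\bar z)$, so $[Y_{-t}(-1/z,-1/\bar z)]=[Y_t(z,\bar z)]$.

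For (2), minimality in $\hh^4$ persists because $\mathbf v=(0,1,0,0)^t$ satisfies $\mathbf v^tI_{1,3}\mathbf v=1>0$ and is fixed by $T_{2,t}$, so $\mathbf v^tI_{1,3}T_{2,t}\hat B_1=\mathbf v^tI_{1,3}\hat B_1=0$; thus by Theorem \ref{thm-minimal} and the $\R$-dressing result of Subsection 4.3, the projection $y_t=\frac1{y_1}(y_0,y_2,y_3,y_4,y_5)^t$ is minimal in $\hh^4$ off $\{y_1=0\}$, and $\langle Y_t,Y_t\rangle=0$ forces $\langle y_t,y_t\rangle=-1$. The metric $|(y_t)_z|^2=6Q/P^2$ and curvature $K=-1-\frac23 e^{2t}P^4/Q^3$, with $P:=-y_1=e^{2t}r^6-3r^4-3e^{2t}r^2+1$, are obtained by differentiating \eqref{eq-ve-y-5} (lengthy but routine). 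The zero set $\{P=0\}$ is governed by the cubic $e^{2t}u^3-3u^2-3e^{2t}u+1=0$ in $u=r^2$, whose three real roots are given by Vi\`ete's trigonometric formula, yielding the stated $r_1^2,r_2^2$ and, via the footnote's estimate, $0<r_1<r_2$. Since $Q>0$ while $P$ has simple zeros at $r_1,r_2$, the metric $6Q/P^2$ blows up quadratically there, so each region $M_{t,1},M_{t,2},M_{t,3}$ is complete; properness follows from the smoothness of the boundary curves at $\partial_\infty\hh^4$.

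For (3), isotropy of the pieces is a conformal invariant inherited from $[Y_t]$, and minimality and completeness give the disk/annulus types. To compare $y_{t,1}$ and $y_{t,3}$ I use the infimum of the Gauss curvature, an isometric invariant: since $K$ depends only on $r$ and at the center $K(0)=-1-\frac23 e^{-4t}$ is its minimum on $M_{t,1}$ (with $K\nearrow-1$ as $r\to r_1$, matching the range $[-\tfrac53,-1)$ at $t=0$), while the symmetry of (1) gives $y_{t,3}\cong y_{-t,1}$ with infimal curvature $-1-\frac23 e^{4t}$, these agree iff $e^{-4t}=e^{4t}$, i.e. iff $t=0$. For (4) I rescale \eqref{eq-ve-y-5} by $e^{-2t}$ and let $t\to+\infty$: the entries $y_2,y_3$ vanish and $e^{-2t}Y_t\to(r^6+3r^2,-r^6+3r^2,0,0,-i\sqrt3 r^2(z^2-\bar z^2),\sqrt3 r^2(z^2+\bar z^2))^t$, whose image is a totally geodesic $\SSS^2\subset\SSS^4$ meeting $\SSS^3_0$ orthogonally and covered twice (the $z^2$-dependence forces branch points at $z=0,\infty$); this is $y_\infty$.

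Finally, for (5) I use that $W=\frac i2\int|\kappa|^2\,\dd z\wedge\dd\bar z$ is conformally invariant, hence additive over the three regions with $W(y_{t,1})+W(y_{t,2})+W(y_{t,3})=W([Y_t])=8\pi$; moreover $(-K-1)\,\dd A\propto e^{2t}P^2Q^{-2}\,\dd x\,\dd y$ depends only on $r$, so $W(y_{t,1})=c\int_0^{r_1}e^{2t}P^2Q^{-2}r\,\dd r$, which is continuous in $t$. The value at $t=0$ is $(4-2\sqrt3)\pi$ by Proposition \ref{prop-h41}; as $t\to+\infty$ the domain $M_{t,1}$ shrinks to $z=0$ and the curvature density vanishes by (4), so $W(y_{t,1})\to0$; and by the symmetry $W(y_{t,1})=W(y_{-t,3})$ together with the limit (4), the branch points carry the Willmore quantum $4\pi$ each, one remaining interior to $M_{t,3}$, giving $\lim_{t\to-\infty}W(y_{t,1})=\lim_{s\to+\infty}W(y_{s,3})=4\pi$. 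Continuity and the intermediate value theorem then produce every value in $(0,4\pi)$, in particular a negative $t$ with $W(y_{t,1})>1.9999\pi$ and all of $(0,1.9999\pi]$. The hard part is the concentration analysis at the two ends, i.e. showing $W(y_{t,1})\to0$ (a shrinking domain with singular curvature) and that each branch point contributes exactly $4\pi$; I expect to settle these from the closed form of the one-variable rational integral above or by a blow-up near the branch points.
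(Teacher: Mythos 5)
Your derivation of \eqref{eq-ve-y-5} (via the observation that $T_{2,t}$ rescales the Weierstrass data to $(e^{t}h_1,h_2,h_3,e^{-t}h_4)$) and your treatment of (1)--(4) match the paper, which disposes of these parts as ``direct computations''; your positivity argument for the conformal factor and the rescaling limit in (4) are fine. The one place where you genuinely diverge is (5), and that is where there is a real gap. The paper proves (5) by writing $W([Y_t]|_{M_{t,j}})$ as an explicit one--variable rational integral, estimating it directly as $t\to+\infty$ to get $W(M_{t,1})\to0$, and then \emph{numerically evaluating} the integral at $t=\ln 0.000039$ to obtain $W(M_{t,1})\approx 1.99991\pi>1.9999\pi$, after which continuity and the intermediate value theorem finish the argument. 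You instead rely on the symmetry $W(y_{t,1})=W(y_{-t,3})$ (which is correct) together with the claim that each branch point of the limiting double cover carries a Willmore quantum of $4\pi$, so that $\lim_{t\to-\infty}W(y_{t,1})=4\pi$.

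That claim is not only unproven (you flag it yourself as ``the hard part''), it is false. A scaling analysis of the integral
\[
W(M_{s,3})=8\pi\int_{u_2}^{\infty}\frac{a\bigl(au^3-3u^2-3au+1\bigr)^2}{\bigl(au^4+4a^2u^3-6au^2+4u+a\bigr)^2}\,\dd u,\qquad a=e^{2s},\ u=r^2,
\]
with the substitution $u=aw$ shows that as $s\to+\infty$ the integrand concentrates at the scale $u\sim a$ and converges to $\dfrac{\dd w}{(w+4)^2}$ on $(0,\infty)$, giving $\lim_{s\to+\infty}W(M_{s,3})=8\pi\cdot\tfrac14=2\pi$, not $4\pi$; the two branch points of the limit carry $6\pi$ and $2\pi$ respectively, and this is exactly consistent with the paper's numerical value $W(M_{t,1})\approx1.999910\pi$ at $t\approx-10.15$. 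Consequently your concluding statement that the intermediate value theorem yields every value in $(0,4\pi)$ is wrong; only values below $2\pi$ are reached along this family, and even the weaker assertion needed for the proposition (some $t$ with $W(y_{t,1})>1.9999\pi$) requires you to actually establish that the limit equals $2\pi$ (or at least exceeds $1.9999\pi$), which is precisely the quantitative step you have deferred. Your route can be repaired --- the symmetry plus the $2\pi$ limit computed above does the job and is arguably cleaner than the paper's numerical check --- but as written the proposal asserts an incorrect limit and omits the only nontrivial estimate in part (5).
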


				\begin{proof}
					(1) and (2) come from direct computations, as shown in the proposition. (3) is obvious.
					
					Now let's consider (4).  When $t\rightarrow +\infty$, from \eqref{eq-ve-y-5} it is direct to see that $y_t$ tends to
					\[\frac{1}{3+r^4}\left(
					\begin{array}{ccccc}
					-r^4+3\\
					0\\
					0\\
					-i\sqrt{3}(z^2-\bar z^2)\\
					\sqrt{3}(z^2+\bar z^2)\\
					\end{array}
					\right),\]
					which is exactly a branched double covering of a totally geodesic surface $y_\infty$ orthogonal to the infinity boundary of $\hh^4$.
					Moreover, $[Y_t]|_{M_{t,1}}$ tends to the branched point $p_0=(1,0,0,0,0)^t$.
					The equator $\SSS^3_0$ divides $y_\infty$ into two parts: $y_{\infty}^+$ (containing $p_0$) and $y_{\infty}^-$.
					Therefore $[Y_t]|_{M_{t,2}}$ tends to $y_{\infty}^+\setminus\{p_0\}$ and $[Y_t]|_{M_{t,3}}$ tends to $y_{\infty}^-$ .
					
					Finally, let's consider (5). First we note that the Willmore energy of $[Y_t]|_{M_{t,j}}$ are
					\begin{equation}\label{eq-yt-W}
					W(M_{t,j})=16\pi\int_{r_{j-1}}^{r_j}\frac{ e^{2t}(e^{2t}r^6- 3r^4 - 3e^{2t}r^2 + 1)^2}{(e^{2t}r^8 + 4e^{4t}r^6 - 6e^{2t}r^4 + 4r^2+e^{2t})^2}r \dd r,~ j=1,2,3,
					\end{equation}
					with $r_0=0$, $r_3=+\infty$ and $r_1$ and $r_2$ as shown in the proposition.
					
					Since \[\lim_{t\rightarrow+\infty}r_1=0 \hbox{ and } \lim_{t\rightarrow+\infty}e^{2t}r_1=0,\]
					when $t\rightarrow+\infty$ we have for $0\leq r\leq r_1$
					\[(e^{2t}r^8 + 4e^{4t}r^6 - 6e^{2t}r^4 + 4r^2+e^{2t})^2\geq e^{4t},\ (e^{2t}r^6- 3r^4 - 3e^{2t}r^2 + 1)^2<1.\]
					So when $t\rightarrow+\infty$,
					\[\begin{split}
					\int_{0}^{r_1}\frac{ e^{2t}(e^{2t}r^6- 3r^4 - 3e^{2t}r^2 + 1)^2}{(e^{2t}r^8 + 4e^{4t}r^6 - 6e^{2t}r^4 + 4r^2+e^{2t})^2}r \dd r& \leq \int_{0}^{r_1} e^{-2t}r \dd r\\
					&=2 e^{-2t}r_1^2.\\
					\end{split}\]
					So \[\lim_{t\rightarrow+\infty}W(M_{t,1})=0.\]
					
					On the other hand, numerical computation shows when $t=\ln0.000039$,
					\[W(M_{t,2})\approx 6.000089931\pi, \ W(M_{t,1})\approx 1.999910062\pi.\]
					Since $W(M_1)$ depends continuously on $t$, we see that for any number $c_0\in(0,1.9999\pi]$, there exists some $t_0\in\R$ such that
					$W(M_1)=c_0$ for $t=t_0$. This finishes the proof.
				\end{proof}
				
				\begin{remark} It is interesting to ask whether there exists a complete minimal annulus $x$ in $\hh^4$ with $W(x)\leq 6\pi$. Moreover, what is the infimum of the Willmore energy of a complete minimal annulus $x$ in $\hh^4$?
				\end{remark}

				\subsubsection{$\R^1-$minimal deformations of the Veronese two-sphere in $\SSS^{4}$}
				Similarly we can construct a family of minimal two-spheres in $\SSS^4$ via the $\R^1-$action on the Veronese two sphere in $\SSS^{4}$.
				\begin{proposition}
					Let $z=re^{i\theta}$. Set
					\begin{equation}\label{eq-f}
					h_1=-2z^3,\ h_2=\sqrt{3}iz^2,\ h_3=\sqrt{3}iz^2, \ h_4=-2z.
					\end{equation}
					Set $Y_t=T_t\sharp Y$ with $T_t=diag(T_{2,t}, I_2)$. Then
					\begin{equation}\label{eq-ve-y-t2}
					Y_t=\left(
					\begin{array}{c}
					y_0 \\
					y_1 \\
					y_2 \\
					y_3 \\
					y_4 \\
					y_5 \\
					\end{array}
					\right)
					=\left(
					\begin{array}{ccccc}
					e^{2t}r^6+3r^4+3e^{2t}r^2+1\\
					-e^{2t}r^6+3r^4+3e^{2t}r^2-1\\
					\sqrt{3}e^t(1-r^4)(z+\bar z)\\
					-i\sqrt{3}e^t(1-r^4)(z-\bar z)\\
					\sqrt{3} (1+e^{2t}r^2)(z^2+\bar z^2)\\
					i\sqrt{3} (1+e^{2t}r^2)(z^2-\bar z^2)\\
					\end{array}
					\right).
					\end{equation}
					\begin{enumerate}
						\item For every $t\in\R$, $[Y_t]$  is conformally equivalent to  an immersed isotropic minimal two-sphere $y_t=\frac{1}{y_0}\left(
						\begin{array}{ccccc}
						y_1 & y_2 & y_3 & y_4 & y_5 \\
						\end{array}
						\right)^t
						$ in $\SSS^{4}$ with $W([Y_t])=8\pi$,
						\[\left|\dd(y_t)\right|^2=\frac{12(e^{2t}r^8 + 4e^{4t}r^6 + 6e^{2t}r^4 + 4r^2 + e^{2t})}{(e^{2t}r^6+3r^4+3e^{2t}r^2+1)^2}|\dd z|^2\]
						and
						\begin{equation}\label{eq-min-sn-k}
						K_t=1-\frac{2e^{2t}(e^{2t}r^6+3r^4+3e^{2t}r^2+1)^4}{3(e^{2t}r^8 + 4e^{4t}r^6 + 6e^{2t}r^4 + 4r^2 + e^{2t})^3}.
						\end{equation}
						
						\item $[Y_t]$ descend to a minimal $\R P^2$ if and only if $t=0$.
						\item When $t\rightarrow\infty$, $y_t$ tends to a branched double covering of a totally geodesic round two-sphere of $\SSS^4$.
					\end{enumerate}   \end{proposition}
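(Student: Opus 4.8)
The plan is to follow the proof of Proposition \ref{prop-h42} almost verbatim, exchanging the $\hh^4$-data for the $\SSS^4$-data (so that $h_2=h_3$ rather than $h_2=-h_3$) and the distinguished vector $(0,1,0,0)^t$ for $(1,0,0,0)^t$. First I would record the effect of the dressing $T_t=\mathrm{diag}(T_{2,t},I_2)$ on the Weierstrass data. By Proposition \ref{prop-dr} the action replaces $\hat B_1$ by $T_{2,t}\hat B_1$, and inserting the explicit $\mathbf h$ from \eqref{eq-ve} shows this is the same as keeping $h_2,h_3$ and replacing $h_1,h_4$ by $\tilde h_1=e^th_1$, $\tilde h_4=e^{-t}h_4$; the isotropy constraint $h_1'h_4'+h_2'h_3'=0$ survives because $\tilde h_1'\tilde h_4'=h_1'h_4'$. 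Substituting $\tilde h_1=-2e^tz^3,\ \tilde h_2=\tilde h_3=\sqrt3\,i z^2,\ \tilde h_4=-2e^{-t}z$ into the explicit formula of Theorem \ref{th-willmore-iso-formula} and simplifying yields \eqref{eq-ve-y-t2}; the stated metric and the curvature \eqref{eq-min-sn-k} are then a long but mechanical computation of $\langle (Y_t)_z,\overline{(Y_t)_z}\rangle$ followed by the Gauss equation.

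For the rest of (1): the dressed potential keeps the isotropic shape $(\mathbf h,\ i\mathbf h)$, so $[Y_t]$ is automatically isotropic. To see it is minimal in $\SSS^4$ I would invoke part (2) of Theorem \ref{th-willmore-iso-formula}: since $\tilde h_2'=\tilde h_3'$ and $\{\tilde h_1',\tilde h_2',\tilde h_4'\}$ are linearly independent, condition \eqref{eq-w-iso-m-s} forces $\mathrm v_2=\mathrm v_3=\mathrm v_4=0$, hence $\mathbf v=(\pm1,0,0,0)^t$ with $\mathbf v^tI_{1,3}\mathbf v=-1$, which is admissible; thus $[Y_t]$ is conformally a full isotropic minimal two-sphere in $\SSS^4$, and because $y_0=e^{2t}r^6+3r^4+3e^{2t}r^2+1>0$ everywhere the projection $y_t=\tfrac1{y_0}(y_1,\dots,y_5)^t$ is globally defined. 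For the energy, $[Y_t]$ is a smooth Willmore immersion of $S^2$ varying continuously in $t$, so $W([Y_t])$ is continuous; by \cite{Mon} it lies in $4\pi\mathbb Z$ and equals $8\pi$ at $t=0$ (Veronese), whence $W([Y_t])\equiv8\pi$.

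For (2) I would test descent through the antipodal involution $\mu:z\mapsto-1/\bar z$. If $[Y_t]$ descended to an $\R P^2$ then the induced metric would be $\mu$-invariant; writing the conformal factor of the metric in (1) as $\rho(r)$, the invariance amounts to $\rho(1/r)=r^4\rho(r)$, and comparing the two polynomials shows their numerators agree only when $e^{4t}=1$, i.e. $t=0$. Because of the $S^1$-equivariance of $[Y_t]$, every fixed-point-free antiholomorphic involution has the same radial action on the metric, so testing $\mu$ suffices. Equivalently, a direct substitution $z\mapsto-1/\bar z$ in \eqref{eq-ve-y-t2} gives $[Y_t\circ\mu]=[Y_{-t}]$, and $[Y_{-t}]=[Y_t]$ fails for $t\neq0$ since the factors $1+e^{2t}r^2$ and $1+e^{-2t}r^2$ in $y_4,y_5$ are not proportional as functions of $r$. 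Together with the classical fact that $t=0$ is the Veronese $\R P^2$, this gives the claim.

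Finally, for (3) I would pass to the limit in the projected formula. Dividing \eqref{eq-ve-y-t2} by $e^{2t}$ and letting $t\to+\infty$, the $y_2,y_3$ components are $O(e^{-t})$ relative to $y_0$ and drop out, while setting $u=z^2=\rho e^{i\phi}$ the remaining coordinates converge to $\tfrac1{\rho^2+3}\big(3-\rho^2,\,2\sqrt3\rho\cos\phi,\,-2\sqrt3\rho\sin\phi\big)$ in the subspace $\{y_2=y_3=0\}$; this is a unit vector, so the limit lies on the great (hence totally geodesic and round) two-sphere $S^4\cap\{y_2=y_3=0\}$, and since it factors through $u=z^2$ it is a double cover branched at $z=0,\infty$. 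I expect the main labor to be the bookkeeping in part (1)—deriving \eqref{eq-ve-y-t2} and especially the curvature \eqref{eq-min-sn-k} from the Weierstrass formula—while the only conceptual point is the justification in (2) that a single involution need be tested.
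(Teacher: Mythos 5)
Your proposal is correct and follows exactly the route the paper intends: the paper states this proposition with no proof beyond the word ``Similarly,'' and your argument mirrors the proofs of the two preceding propositions (the $h_1\mapsto e^th_1$, $h_4\mapsto e^{-t}h_4$ effect of $T_{2,t}$ on the Weierstrass data, direct computation of \eqref{eq-ve-y-t2} and the metric/curvature, continuity in $t$ plus Montiel's $4\pi\mathbb{Z}$ quantization to get $W\equiv 8\pi$, the involution $\mu(z)=-1/\bar z$ for (2), and the $e^{-2t}Y_t$ limit for (3)). The only loose phrase is your justification in (2) that testing $\mu$ suffices --- ``every fixed-point-free antiholomorphic involution has the same radial action on the metric'' is not literally true; the clean fix is that any such involution under which $y_t$ descends is an isometry of the rotationally symmetric induced metric, hence normalizes its rotation circle and (being fixed-point-free) must swap the poles $0$ and $\infty$, which is impossible for $t\neq 0$ since \eqref{eq-min-sn-k} gives $K_t(0)=1-\tfrac{2}{3}e^{-4t}\neq 1-\tfrac{2}{3}e^{4t}=K_t(\infty)$.
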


					\subsection{$S^1-$deformation of generalizations of Veronese two-sphere in $\SSS^4$}
					In \cite{Do-Wa-equ}, generalizations of Veronese two-sphere in $\SSS^4$ are discussed. Here we consider the $S^1-$deformation of them, which will give more examples of complete minimal surfaces in $\hh^4$, which will be important in Willmore energy estimates of complete minimal surfaces in $\hh^4$.
					\begin{proposition}\label{prop-s4k}
						Let $z=re^{i\theta}$. Set
						\begin{equation}\label{eq-f2k}
						h_1=-kz^{k+1},\ h_2=i\sqrt{k^2-1}z^{k},\ h_3=i\sqrt{k^2-1}z^{k}, \ h_4=-kz^{k-1},~ k\geq 2,
						\end{equation}
						in \eqref{eq-ve}. Let $[\hat Y]$ be the corresponding Willmore surface in $S^4$.
						Set $\hat Y_t=T_t\sharp \hat Y$ with $T_t=diag(T_{1,t}, I_2)$. Then
						\begin{equation}\label{eq-ve-y-k}
						\hat Y_t=\left(
						\begin{array}{c}
						\hat   y_0 \\
						\hat y_1 \\
						\hat y_2 \\
						\hat y_3 \\
						\hat  y_4 \\
						\hat y_5 \\
						\end{array}
						\right)
						=\left(
						\begin{array}{ccccc}
						(k-1)(r^{2k+2}+1)+(k+1)(r^{2k}+r^2)\\
						-(k-1)(r^{2k+2}+1)+(k+1)(r^{2k}+r^2)\\
						\sqrt{k^2-1}\left((ze^{-it}+\bar z e^{it})-r^{2k}(ze^{it}+\bar z e^{-it})\right)\\
						-i\sqrt{k^2-1}\left((ze^{-it}-\bar z e^{it})-r^{2k}(ze^{it}-\bar z e^{-it})\right)\\
						\sqrt{k^2-1}\left((z^{k}e^{-it}+\bar z^{k}e^{it})+r^{2}(z^{k}e^{it}+\bar z^{k}e^{-it})\right)\\
						i\sqrt{k^2-1}\left((z^{k}e^{-it}-\bar z^{k}e^{it})+r^{2}(z^{k}e^{it}-\bar z^{k}e^{-it})\right)\\
						\end{array}
						\right).
						\end{equation}
						\begin{enumerate}
							\item For every $t\in[0,2\pi]$, $[\hat Y_t]$ is an oriented Willmore immersion from $S^2$ to $\SSS^{4}$ with Willmore energy $4\pi k $.
							$[\hat Y_t]$ is conformally equivalent to $[\hat Y_{t+\pi}]$ for all $t\in[0,\pi]$. And for any $t_1,t_2\in[0,\pi)$, $[\hat Y_{t_1}]$ is conformally equivalent to $[\hat Y_{t_2}]$ if and only if $t_1=t_2$ or $t_1+t_2=\pi$.
							
							\item $[\hat Y_t]$ is conformally equivalent to a minimal two-sphere in $\SSS^{4}$ when $t=0$ and  $[Y_t]$ is conformally equivalent to three complete minimal surfaces in $\hh^4$ on three open subsets of $S^2$ when $t=\frac{\pi}{2}$. For any  other $t\in(0,\pi)$, $[Y_t]$  Willmore surfaces in $\SSS^{4}$ not minimal in any space form.
							\item $[\hat Y_t]$ reduces to a non-oriented Willmore surface from $\R P^2=S^2/\mu$, if and only if $t=0$ or $\pi$, and $k=2\tilde k$ for some $\tilde k\in \mathbb Z^+$.  Here $\mu(z)=-\frac{1}{\bar z}$.
						\end{enumerate}   \end{proposition}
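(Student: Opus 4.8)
The explicit formula \eqref{eq-ve-y-k} and the three assertions follow the template of Proposition \ref{prop-h41} (which is exactly the case $k=2$) together with Theorems \ref{th-willmore-iso-formula} and \ref{th-min-sn}. First I would record that the data \eqref{eq-f2k} meets the structural constraints $h_1'h_4'+h_2'h_3'=0$ and $h_1'h_2'\not\equiv 0$ of Theorem \ref{th-willmore-iso-formula}, so $[\hat Y]$ is an isotropic Willmore surface, and that the choice $h_2=h_3$ puts its potential into the form \eqref{eq-min-sn-p}. By the computation of Subsection 5.2 the dressing $T_t=\mathrm{diag}(T_{1,t},I_2)$ acts by $h_2\mapsto e^{-it}h_2$ and $h_3\mapsto e^{it}h_3$; substituting $h_2=e^{-it}i\sqrt{k^2-1}z^k$, $h_3=e^{it}i\sqrt{k^2-1}z^k$ into the Weierstrass formula \eqref{eq-willmore in s4-y-1} and simplifying with $z\bar z=r^2$ yields \eqref{eq-ve-y-k}. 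This is a direct, if lengthy, calculation.

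For (1): since $h_2=h_3$, Theorem \ref{th-min-sn} gives that $[\hat Y_t]$ is a Willmore surface for every $t$, the family being smooth in $t$; by Montiel's quantization \cite{Mon} we have $W([\hat Y_t])=4\pi m$ with $m$ independent of $t$. Evaluating at $t=0$, where $[\hat Y_0]$ is the generalized Veronese minimal two-sphere of \cite{Do-Wa-equ} (of area $4\pi(k+1)$, hence $W=\mathrm{Area}-4\pi=4\pi k$), fixes $m=k$. Replacing $t$ by $t+\pi$ sends $e^{\pm it}\mapsto -e^{\pm it}$ and so only flips the signs of $\hat y_2,\dots,\hat y_5$, whence $\hat Y_{t+\pi}=\mathrm{diag}(1,1,-1,-1,-1,-1)\hat Y_t$ and $[\hat Y_{t+\pi}]\cong[\hat Y_t]$. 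For the conformal classification I would compute the induced metric $|\dd\hat y_t|^2$ and check that its $t$-dependence enters only through $\cos 2t$, so that $\hat y_{t_1}$ and $\hat y_{t_2}$ are isometric iff $t_1=t_2$ or $t_1+t_2=\pi$; then, exactly as in Proposition \ref{prop-h41}, one argues that for $t\in(0,\pi)$ the surface admits a \emph{unique} $S^1$-symmetry (a second one would force homogeneity and hence, by \cite{MPW,DW-h}, the Veronese case), so that conformal equivalence reduces to isometry.

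For (2): the claims at $t=0$ (minimal in $\SSS^4$), at $t=\frac{\pi}{2},\frac{3\pi}{2}$ (minimal in $\hh^4$), and for the remaining $t$ (non-minimal Willmore) are immediate from Theorem \ref{th-min-sn} applied to the potential \eqref{eq-min-sn-p}. At $t=\frac{\pi}{2}$ the projection into $\hh^4$ is taken with respect to the spacelike vector $(0,1,0,0,0,0)^t$, so the dividing set is $\hat y_1=0$, i.e. $g(r):=(k-1)(r^{2k+2}+1)-(k+1)(r^{2k}+r^2)=0$. I would show $g$ has exactly two positive roots $r_1<1<r_2$: the identity $r^{2k+2}g(1/r)=g(r)$ pairs roots as $r\leftrightarrow 1/r$, while $g(0)=k-1>0$, $g(1)=-4<0$ and $g(+\infty)=+\infty$ produce at least one root in $(0,1)$; a monotonicity/convexity count then excludes any further roots. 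The two circles $r=r_i$ split $S^2$ into three annuli, on each of which $\widetilde y=\hat y_1^{-1}(\hat y_0,\hat y_2,\hat y_3,\hat y_4,\hat y_5)$ is a minimal surface in $\hh^4$ by Theorem \ref{th-min-sn}; its completeness, properness and finiteness of Willmore energy follow, as in Proposition \ref{prop-h41}, from the surface meeting $\partial_\infty\hh^4$ along the smooth curves $r=r_i$.

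For (3): I would substitute $z\mapsto\mu(z)=-1/\bar z$ into \eqref{eq-ve-y-k}; using $1/\bar z=z/r^2$ and $1/z=\bar z/r^2$ one finds componentwise that $\hat Y_t\circ\mu=r^{-2k-2}\,\mathrm{diag}(1,1,1,1,(-1)^k,(-1)^k)\,\hat Y_{-t}$. Hence $[\hat Y_t]$ descends to $\R P^2=S^2/\mu$ iff $\mathrm{diag}(1,1,1,1,(-1)^k,(-1)^k)\hat Y_{-t}$ is a scalar multiple of $\hat Y_t$; comparing the $t$-free entry $\hat y_0$ forces the scalar to be $1$, then the identity $\hat y_2(t)-\hat y_2(-t)=2i\sqrt{k^2-1}(1+r^{2k})\sin t\,(\bar z-z)$ must vanish identically, giving $\sin t=0$, i.e. $t\in\{0,\pi\}$, while the $\hat y_4,\hat y_5$ entries force $(-1)^k=1$, i.e. $k$ even. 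This is the asserted equivalence. The main obstacles are the ``only if'' half of the classification in (1), where one must rule out an accidental second $S^1$-symmetry identifying two distinct $t$'s, and the exact root count for $g(r)=0$ uniformly in $k$ in (2); the $\mu$-equivariance computation in (3) is routine once the scaling factor $r^{-2k-2}$ has been extracted.
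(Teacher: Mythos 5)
Your proposal is correct and follows essentially the same route as the paper: the formula \eqref{eq-ve-y-k} by direct substitution into the Weierstrass representation, the energy $4\pi k$ via Montiel's quantization \cite{Mon} plus continuity in $t$ together with the area $4\pi(k+1)$ of the equivariant minimal two-sphere at $t=0$ (which the paper obtains from Theorem 3.1 of \cite{Ejiri-equ}, with equivariant type $(1,k)$), the conformal classification and the minimality statements by the unique-$S^1$-symmetry argument and Theorem \ref{th-min-sn} exactly as in Proposition \ref{prop-h41}, and part (3) by substituting $\mu$ into \eqref{eq-ve-y-k}. The paper's proof is simply a terser version of yours, deferring everything except the energy computation and the $\mu$-equivariance to the $k=2$ case.
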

						\begin{proof}
							The equation \eqref{eq-ve-y-k} comes from direct computations.
							
							We need only to show that $W([\hat Y_t])=4\pi k$, since proofs of the rest of (1) and (2) are the same as Proposition \ref{prop-h41}.
							Since the Willmore energy of $[Y_t]$ depends smoothly on $t$ and the Willmore energy of a Willmore two-sphere is $4\pi m$ for some $m\in Z$ \cite{Mon}, we have
							$Area([\hat Y_t])=Area([\hat Y])$. By Theorem 3.1 of \cite{Ejiri-equ} (see also \cite{Bab}), $Area([\hat Y_t])=Area([\hat Y])=4\pi(k+1)$ since the equivariant action here is $(m_{(1)},m_{(2)})=(1,k)$.
							
							Substituting $\mu$ into \eqref{eq-ve-y-k} shows that
							$[\hat Y_t\circ\mu]=[\hat Y_t]$ if and only if $k$ is even and $t=0$ or $\pi$, which  finishes the proof of (3).
						\end{proof}
						\subsection{$\R^1-$minimal deformations of another type of minimal surfaces in $\hh^4$}
						
						It is natural to show the existence of complete minimal surfaces in $\hh^4$ with any Willmore energy $W_0\in\R^+\cup\{0\}$ by further generalization of the above examples.
						
						\begin{proposition}\label{prop-h4k}
							Let $z=re^{i\theta}$. Let $[Y]=\hat Y_{t}|_{t=\frac{3\pi}{2}}$. Then its normalized potential can be given by setting
							\begin{equation}\label{eq-f2}
							h_1=-kz^{k+1},\ h_2=\sqrt{k^2-1}z^{k},\ h_3=-\sqrt{k^2-1}z^{k}, \ h_4=-kz^{k-1},~ k\geq 2,
							\end{equation}
							in \eqref{eq-ve}.  Set $Y_t=T_t\sharp Y$ with $T_t=diag(T_{2,t}, I_2)$. Then
							\begin{equation}\label{eq-ve-y-5}
							Y_t=\left(
							\begin{array}{c}
							y_0 \\
							y_1 \\
							y_2 \\
							y_3 \\
							y_4 \\
							y_5 \\
							\end{array}
							\right)
							=\left(
							\begin{array}{ccccc}
							(k-1)(e^{2t}r^{2k+2}+1)+(k+1)(r^{2k}+e^{2t}r^2)\\
							-(k-1)(e^{2t}r^{2k+2}+1)+(k+1)(r^{2k}+e^{2t}r^2)\\
							i e^t\sqrt{k^2-1}(1+r^{2k})(z-\bar z)\\
							e^t\sqrt{k^2-1}(1+r^{2k})(z+\bar z)\\
							i \sqrt{k^2-1}(1-e^{2t}r^{2})(z^{k}-\bar z^{k})\\
							-\sqrt{k^2-1}(1-e^{2t}r^{2})(z^{k}+\bar z^{k})\\
							\end{array}
							\right).
							\end{equation}
							\begin{enumerate}
								\item For every $t\in\R$, $[Y_t]$ is an oriented Willmore immersion from $S^2$ to $\SSS^{4}$ with Willmore energy $4\pi k $
								and
								$[Y_t(z,\bar z)]$ is conformally equivalent to $[Y_{-t}(-\frac{1}{z},-\frac{1}{\bar z})]$.
								
								\item Set
								\[y_t=\frac{1}{y_1}\left(
								\begin{array}{ccccc}
								y_0 & y_2 & y_3 & y_4 & y_5 \\
								\end{array}
								\right)^t.\]
								Then $y_t$ is minimally immersed  into $\hh^4$ on the points where $y_0\neq0$, with metric
								\[\begin{split}|\dd y_t|^2&=\frac{4(k^2-1)\left(e^{2t}(1+r^{2k})^2 + k^2r^{2k-2}(1-e^{2t}r^2)^2 \right)}{\left((k-1)(e^{2t}r^{2k+2}+1)-(k+1)(r^{2k}+e^{2t}r^2)\right)^2}|\dd z|^2
								\end{split}
								\]
								and curvature
								\[K=-1-\frac{k^2 e^{2t} r^{2k-4}\left((k-1)(e^{2t}r^{2k+2}+1)-(k+1)(r^{2k}+e^{2t}r^2)\right)^4}{2(k^2-1)\left(e^{2t}(1+r^{2k})^2 + k^2r^{2k-2}(1-e^{2t}r^2)^2 \right)^3}\]
								In particular, set
								\[\begin{split}
								&M_{t,1}=\{z\in \overline{\mathbb C}\ |\ |z| <r_1\},\\
								&M_{t,2}=\{z\in \overline{\mathbb C}\ |\ r_1<|z| <r_2\},\\
								&M_{t,3}=\{z\in \overline{\mathbb C}\ |\ |z|>r_2\}.
								\end{split}\]
								Here we denote by $r_1$ and $r_2$ the two positive solutions to \[(k-1)(e^{2t}r^{2k+2}+1)-(k+1)(r^{2k}+e^{2t}r^2)=0\] with $0<r_1<r_2$.
								Then we obtain two complete minimal disks $M_{t,1}$, $M_{t,3}$ and one complete minimal annulus $M_{t,2}$ in $\hh^4$.
								\item $[Y_t]|_{M_{t,1}}$ and $[Y_t]|_{M_{t,3}}$ are conformally equivalent to complete immersed, isotropic minimal disks $y_{t,1}$ and $y_{t,3}$ in $\hh^4$.
								Moreover,  $y_{t,1}$ and $y_{t,3}$ are isometrically congruent if and only if $t=0$.  $[Y_t]|_{M_{t,2}}$ is conformally equivalent to an immersed, complete, isotropic minimal annulus $y_{t,2}$ in $\hh^4$.
								\item
								For every fixed $k$, when $t\rightarrow +\infty$, $[Y_t]$ tends to
								a branched  $k-$cover of a totally geodesic surface $y_{\infty}$ $\SSS^4$ which is orthogonal to the equator $\SSS^3_0=\{x\in \SSS^4|x\perp (1,0,0,0,0)^t\}$.
								\item When $t\rightarrow +\infty$, $W([Y_t]|_{M_{t,1}})\rightarrow 0$.
								\item Set ${t_0}=\frac{1-k}{2}\ln k$. Then when $k$ is large enough,
								\begin{equation}\label{eq-estimate}
								W([Y_{t_0}]|_{M_{t_0,1}})\geq \frac{(k-1)\pi}{3}.
								\end{equation} Moreover, when $k\rightarrow+\infty$, $W([Y_{t_0}]|_{M_{t_0,1}})\rightarrow +\infty$. In particular for every $W_0\in\R^+$, there exists some   $k\in\mathbb Z^+$  with $k>2+\frac{3W_0}{\pi}$, and $t'\in\R$, such that $W([Y_t']|_{M_{t',1}})=W_0$.
							\end{enumerate}   \end{proposition}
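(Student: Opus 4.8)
The plan is to establish the six claims of Proposition \ref{prop-h4k} by following closely the pattern set in Proposition \ref{prop-h42}, treating that earlier result as the $k=2$ prototype and promoting the exponents $2,3,6$ there to the $k$-dependent exponents $k-1,k,k+1,2k$ here. First I would verify the explicit formula \eqref{eq-ve-y-5} for $Y_t$: starting from the potential data \eqref{eq-f2} substituted into \eqref{eq-ve}, I apply Theorem \ref{th-willmore-iso-formula} to write down $Y_1$, then act by $T_t=\mathrm{diag}(T_{2,t},I_2)$ as prescribed by Proposition \ref{prop-dr} and the $\R^1$-action formula. Since $h_2=-h_3$ holds for this data, Theorem \ref{th-min-hn} (via the vector $\mathbf v=(0,1,0,0)^t$) guarantees that the Lorentzian orthogonality \eqref{eq-min} is preserved for every $t$, so each $[Y_t]$ is a Willmore surface conformal to a minimal surface in $\hh^4$; this is exactly what yields claims (2) and (3) once the metric and curvature are computed. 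The Willmore energy $4\pi k$ in (1) follows as in Proposition \ref{prop-s4k}: the energy depends smoothly on $t$ and is quantized in $4\pi\mathbb{Z}$ for Willmore two-spheres \cite{Mon}, so it is locally constant, hence equal to its value at $t=\tfrac{3\pi}{2}$ in Proposition \ref{prop-s4k}, namely $4\pi k$. The symmetry $[Y_t(z,\bar z)]\simeq[Y_{-t}(-1/z,-1/\bar z)]$ is checked by direct substitution, noting that $z\mapsto-1/z$ exchanges the roles of the $r^{2k}$ and $r^2$ terms up to the sign flip absorbed by $t\mapsto-t$.

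\emph{Metric, curvature, and the three pieces.} For claim (2) I would compute $|\dd y_t|^2$ and $K$ directly from the affine chart $y_t=\tfrac{1}{y_1}(y_0,y_2,y_3,y_4,y_5)^t$; these are lengthy but routine rational-function computations entirely parallel to the $k=2$ case, and the stated formulas are their output. The decomposition into $M_{t,1},M_{t,2},M_{t,3}$ comes from the vanishing locus of $y_1$, i.e. the two positive roots $r_1<r_2$ of the sextic-type polynomial $(k-1)(e^{2t}r^{2k+2}+1)-(k+1)(r^{2k}+e^{2t}r^2)=0$; existence of exactly two positive roots follows from a sign analysis (the polynomial is positive near $r=0$ and $r=\infty$ and dips negative in between). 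Properness and completeness of each piece follow as in Proposition \ref{prop-h41}(3), from the fact that the boundary circles sit at the infinite boundary of $\hh^4$. Claim (3) (conformal type and the congruence criterion $t=0$) is read off from the symmetry in (1) together with the isometry analysis. Claim (4) is the $t\to+\infty$ limit: I would take the limit in \eqref{eq-ve-y-5} after dividing by the dominant term, obtaining a map whose nonzero components involve only $z^k,\bar z^k$, which is a branched $k$-fold cover of a totally geodesic surface orthogonal to $\SSS^3_0$.

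\emph{The energy-value claims (5) and (6).} These are the heart of the proposition and where the real work lies. Claim (5), $W([Y_t]|_{M_{t,1}})\to0$ as $t\to+\infty$, I would prove by the same dominated-estimate strategy used for Proposition \ref{prop-h42}(5): writing $W(M_{t,1})$ as a radial integral of $K+1$ (times the area form) over $[0,r_1]$, one bounds the integrand using $r_1\to0$ and $e^{t}r_1\to0$, so that the denominator is dominated by the $e^{2t}$ term while the numerator stays bounded, forcing the integral to $0$. Claim (6) is the crucial lower bound producing arbitrarily large energy: with the specific choice $t_0=\tfrac{1-k}{2}\ln k$, I expect the substitution $r^2=k^{-1}\rho$ (or a similar rescaling matched to $t_0$) to convert the integral into one whose integrand, for large $k$, is bounded below on a fixed sub-interval by a quantity of order $1/k$ over an interval of length comparable to $k$, yielding the linear-in-$k$ bound $W\geq\tfrac{(k-1)\pi}{3}$. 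Since the energy varies continuously in both $t$ (claim 5 gives values near $0$) and discretely jumps in $k$, the intermediate value theorem then supplies, for each target $W_0\in\R^+$, a pair $(k,t')$ with $k>2+\tfrac{3W_0}{\pi}$ realizing $W([Y_{t'}]|_{M_{t',1}})=W_0$.

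\emph{The main obstacle.} The hard part will be claim (6), specifically identifying the correct scaling $t_0=\tfrac{1-k}{2}\ln k$ and extracting the clean lower bound $\tfrac{(k-1)\pi}{3}$ from the radial energy integral. Everything else reduces to substitution into known formulas or to limit/sign arguments that mirror the $k=2$ proposition; but the asymptotic lower bound requires choosing the right change of variables so that the family of integrands converges (after rescaling) to a limiting profile whose integral grows linearly in $k$, and then carefully controlling the error terms uniformly in $k$. I would isolate this as a separate lemma, possibly deferring its more technical estimate to the appendix, as the paper already signals it does for another lemma.
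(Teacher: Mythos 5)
Your treatment of parts (1)--(5) coincides with the paper's. The paper proves (1) by citing Proposition \ref{prop-s4k} (where the smooth-dependence-plus-quantization argument you describe is carried out), declares the proofs of (2)--(4) to be ``the same as Proposition \ref{prop-h42}'', and proves (5) by exactly the dominated estimate you outline, using $\lim_{t\to+\infty}r_1=0$ and $e^{2t}r_1^2\le 1$ to bound the radial integrand on $[0,r_1]$. The concluding intermediate-value step of (6) also matches. So far there is no divergence.

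The gap is in the lower bound \eqref{eq-estimate}, which you rightly single out as the crux but whose mechanism you describe in a way that would not deliver the claim. You propose rescaling $r^2=k^{-1}\rho$ so that the integrand is ``bounded below \dots by a quantity of order $1/k$ over an interval of length comparable to $k$''; but $1/k$ times $k$ is $O(1)$, not the required $O(k)$, and a family of integrands ``converging to a limiting profile'' has integrals that are $O(1)$ by definition, so this cannot by itself grow linearly in $k$. The paper's appendix lemma works differently: with $a=e^{2t_0}=k^{-(k-1)}$ and $\rho=r^2$ one writes $W(M_{t_0,1})$ as $2\pi k^2(k-1)^2$ times an integral over $[0,\rho_1]$, shows $\rho_1\to 1$ (so no stretching of the domain occurs), uses concavity of $L(\rho)=a\rho^{k+1}+1-b\rho^k-ab\rho$ on $(0,\rho_1)$ to replace $L$ by the linear lower bound $\rho_1^{-1}(\rho_1-\rho)$, normalizes $\varphi=\rho/\rho_1$, and then splits the resulting integral over the \emph{fixed} interval $(0,1)$ at the balance point $\delta=(a/k^2)^{1/(k-1)}\approx 1/k$ where the two terms $2a$ and $k^2\varphi^{k-1}$ of the (simplified) denominator exchange dominance. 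The outcome is that the normalized integral is bounded below by roughly $\tfrac{2}{9k^2(k-1)}$, i.e.\ it \emph{decays} like $k^{-3}$, and the linear growth comes entirely from the prefactor $2\pi k^2(k-1)^2$ beating this decay. Without the concavity bound and the balance-point splitting (or an equivalent device) your sketch does not produce $\tfrac{(k-1)\pi}{3}$; the deferred lemma is not mere error control but contains the essential idea of the proof of (6), and as written your plan for it is both incomplete and quantitatively inconsistent.
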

							\begin{proof}
								(1). By Proposition \ref{prop-s4k}, we have
								$W([Y_t])=W([Y])=4\pi k$.

								The proof of (2)-(4) is the same as Proposition \ref{prop-h42}.  So let's focus on  (5) and (6).
								First we note that the Willmore energy of $[Y_t]|_{M_{t,j}}$ are (Here $b=\frac{k+1}{k-1}$)
								\begin{equation}\label{eq-yt-W-2}
								W(M_{t,j})=4\pi\int_{r_{j-1}}^{r_j}\
								\frac{k^2(k-1)^2 e^{2t} r^{2k-3}\left(e^{2t}r^{2k+2}+1 -b(r^{2k}+e^{2t}r^2)\right)^2}{\left(e^{2t}(1+r^{2k})^2 + k^2r^{2k-2}(1-e^{2t}r^2)^2 \right)^2}
								\dd r,~ j=1,2,3,
								\end{equation}
								with $r_0=0$, $r_3=+\infty$ and $r_1$ and $r_2$ as shown in the proposition.
								
								It is direct to check that
								\[\lim_{t\rightarrow+\infty}r_1=0 \hbox{ and } e^{2t}r_1^2\leq 1.\]
								When $t\rightarrow+\infty$ we have for $0\leq r\leq r_1$
								\[e^{2t}(1+r^{2k})^2 + k^2r^{2k-2}(1-e^{2t}r^2)^2 \geq e^{2t},~ e^{2t}r^{2k+2}+1 -b(r^{2k}+e^{2t}r^2)<1.\]
								So when $t\rightarrow+\infty$,
								\[\begin{split}
								\int^{r_{1}}_{0}\
								\frac{k^2 e^{2t} r^{2k-3}\left(e^{2t}r^{2k+2}+1 -b(r^{2k}+e^{2t}r^2)\right)^2}{\left(e^{2t}(1+r^{2k})^2 + k^2r^{2k-2}(1-e^{2t}r^2)^2 \right)^2}
								\dd r&\leq \int_{0}^{r_1}\frac{ 2k^2e^{2t}}{(e^{2t})^2}r^{2k-3} \dd r\\
								&=\frac{2k^2 e^{-2t}r_1^{2k-2}}{2k-2}\rightarrow0.\\
								\end{split}\]
								So for every fixed $k$, $\lim_{t\rightarrow+\infty}W([Y_t]|_{M_{t,1}})=0.$
								
								The key point of (6) is the technical estimate \eqref{eq-estimate}. We will leave the proof of it for the appendix.
							\end{proof}

							\subsection{Non-oriented examples of minimal Moebius strips in $\hh^4$}
							
							In this subsection, we consider some non-oriented minimal surfaces in $\mathbb{H}^4$, which is based on the work of \cite{DoWaSym2} and \cite{Wang-S4}.
							
							Set
							\begin{equation}
							h_1=\frac{3}{2} z^{5},\ h_2=-h_3= \frac{\sqrt{5}}{2} z^{3},\ \
							h_4=\frac{3}{2} z.\
							\end{equation}
							We have
							\begin{equation}\label{eq-y-mini-h}
							Y=\left(\begin{array}{c}
							y_0 \\
							y_1\\
							y_2 \\
							y_3 \\
							y_4\\
							y_5 \\
							\end{array}\right)
							=\left(\begin{array}{ccc}
							(r^{10}+5r^6+5r^4+1)  \\
							-(r^{10}-5r^6-5r^4+1) \\
							\sqrt{5}i(1+r^{6})(z^2-\bar{z}^2)\\
							\sqrt{5}(1+r^{6})(z^2+\bar{z}^2)\\
							-\sqrt{5}i(1-r^{4})(z^3-\bar{z}^3) \\
							\sqrt{5}(1-r^{4})(z^3+\bar{z}^3) \\
							\end{array}\right).
							\end{equation}
							with
							\[|\dd Y|^2=40r^2(4r^4-7r^2+4)(r^2+1)^4|\dd z|^2.\]
							So $Y$ has exactly two branched points $0$ and $\infty$. Consider $\mu(z)=-\frac{1}{\bar z}$, we have
							\[[Y(\mu(z))]=[Y(z)].\]
							As a consequence, $[Y]$ induces a branched Willmore $\R P^2$: $[Y]:S^2/{\mu}=\R P^2\rightarrow \SSS^{4}$ is a  Willmore $\R P^2$ with Willmore energy $12\pi$ and  one branched point at $z=0$. For more discussions on singularities and branched points of Willmore surfaces, see \cite{BR,Mi-R,KS2,KS1,LN,R}.

							Set $r_1=\frac{\sqrt{5}-1}{2}$ and \[M_1=\left\{z\in\C|0\leq r<r_1\right\}, \ M_2=\left\{z\in\C| r_1< r<\frac{1}{r_1}\right\}, \ M_3=\left\{z\in\bar\C|r>\frac{1}{r_1}\right\}.\]
							Set $\tilde{y}=\frac{1}{y_1}(y_0,y_2,y_3,y_4,y_5)^t.$
							We see that
							\begin{enumerate}
								\item $\tilde y|_{M_2/\mu}$ is a complete minimal Moebius strip in $\mathbb H^4$ with $W(y)=\frac{6\sqrt{5}\pi}{5}\approx10.733\pi$.
								\item   $\tilde y|_{M_1}=(\tilde y\circ\mu )|_{M_3}$ is a branched minimal disk in $\mathbb H^4$ with Willmore energy $W(y)=12\pi(1 - 2\sqrt 5/5)\approx 1.267\pi$ and one branched point $z=0$.
							\end{enumerate}
							
							It is natural to ask whether the complete minimal Moebius strip $\tilde y|_{M_2}$ takes uniquely the minimum of the Willmore energy among all complete minimal Moebius strips in $\hh^n$, $n\geq 4$.
							
							\section{Remarks on the non-rigidity of isotropic surfaces in $\SSS^4$}
							
							Finally we would like to discuss briefly some simple applications of the W-deformations
							on the study of stability problems of Willmore surfaces and minimal surfaces. More detailed study will be done in a separate publication, since it will involve many other independent calculations.
							We refer to \cite{NS2, Simons, Urbano, Weiner} for more details on this topics, in particular Theorem 3.3.1 and Corollary 3.3.1 of \cite{Simons}.
							
							Since for isotropic surfaces in $\SSS^4$, we have an explicit W-representation formula, we see that  W-deformations are globally defined if the surfaces are globally defined. From this we see immediately that they are Willmore non-rigidity since they admits  non-trivial Willmore Deformations.
							
							\begin{theorem}Let $y:M\rightarrow \SSS^4$ be an isotropic (hence Willmore) surface from a closed Riemann surface $M$  with its conformal Gauss map in $\mathcal{M}_L$. Then $y$ is  Willmore non-rigid. That is, it admits conformal Jacobi fields different from the conformal  Killing fields  which come from conformal transformations of $S^4$.
							\end{theorem}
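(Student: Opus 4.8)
The plan is to produce an explicit, globally defined one-parameter family of Willmore surfaces through $y$ and to verify that its initial velocity is a conformal Jacobi field transverse to the orbit of the conformal group. The crucial enabling point is isotropy: since $y$ is isotropic with conformal Gauss map in $\mathcal{M}_L$, Theorem~\ref{th-willmore-iso-formula} furnishes a \emph{global} Weierstrass-type representation of $y$ in terms of meromorphic data $h_1,\dots,h_4$ on the closed surface $M$. Consequently the $K^{\C}$-dressing deformation, which in general is only local, is here defined on all of $M$. First I would fix a one-parameter subgroup $T_t=\mathrm{diag}(T_{1,t},I_2)$ (or $\mathrm{diag}(T_{2,t},I_2)$) of $K^{\C}=SO(1,3,\C)\times SO(2,\C)$ and set $y_t:=T_t\sharp y$, so that $y_0=y$ and, by Proposition~\ref{prop-dr}, the normalized potential transforms by conjugation $\eta_t=T_t\,\eta\,T_t^{-1}$. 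Each $y_t$ is a Willmore immersion of the closed surface $M$, so differentiating the Willmore (Euler--Lagrange) equation at $t=0$ shows that the normal variation $V:=\bigl(\partial_t y_t\bigr)^{\perp}\big|_{t=0}$ solves the linearized Willmore equation, i.e.\ is a conformal Jacobi field.

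It then remains to show that $V$ is not the restriction of a conformal Killing field of $\SSS^4$. The conformal transformations of $\SSS^4$ form the group $G=SO^+(1,5)$, and the conformal Killing fields are exactly the initial velocities of curves $t\mapsto g_t\cdot y$ with $g_t\in G$; thus $V$ is Killing precisely when $t\mapsto y_t$ is tangent at $t=0$ to the $G$-orbit of $y$. On the one hand, the explicit computation behind Proposition~\ref{prop-h41}(1) shows (in the model Veronese case, and analogously in general) that the $[y_t]$ are pairwise conformally inequivalent for $t$ in an interval, so $t\mapsto[y_t]$ descends to a non-constant curve in the moduli space $\mathcal{S}/G$; reinforcing this, Theorem~\ref{th-dress-sn} identifies the family, after an auxiliary dressing, with the $S^1$-deformation passing through a minimal surface in $\SSS^4$ at one parameter and a minimal surface in $\hh^4$ at another, and these are not conformally equivalent. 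On the other hand, at the infinitesimal level the stabilizer $K\subset G$ of the mean-curvature sphere at the base point acts on the (conformally invariant, up to $K$-gauge) normalized potential by conjugation, $\xi\mapsto[\xi,\eta_{-1}]$ for $\xi\in\mathfrak{k}$; hence matching $V$ with a Killing field forces $[A-\xi,\eta_{-1}]\equiv 0$ for some $\xi$, where $A=\partial_t T_t|_{t=0}\in\mathfrak{S}\subset i\mathfrak{k}$ (recall $A=-\bar A$).

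The heart of the matter, and the step I expect to be the main obstacle, is to rule this out, i.e.\ to prove that the dressing direction is genuinely transverse to the conformal orbit. Concretely, one must show $A\notin\mathfrak{so}(1,5)+C(\eta_{-1})$, where $C(\eta_{-1})$ is the centralizer of $\eta_{-1}$ in $\mathfrak{so}(1,5,\C)$; the base-point-translation ($\mathfrak{p}$) directions of $\mathfrak{so}(1,5)$, which do not act by a naive conjugation, must be absorbed here by a separate normalization argument. For the decisive estimate I would use the explicit form $\hat B_1=(\mathbf{h}\ \ i\mathbf{h})$ of $\eta_{-1}$ from \eqref{eq-ve}: because its entries are built from linearly independent meromorphic functions $h_j$, the matrix $\eta_{-1}(z)$ genuinely varies with $z$ and its centralizer is correspondingly small, while $A$ lies in the purely imaginary part $i\mathfrak{k}$ and cannot be compensated by any real $\xi\in\mathfrak{so}(1,5)$ modulo this centralizer. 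This produces a conformal Jacobi field $V$ that is not a conformal Killing field, so $y$ is Willmore non-rigid; the remaining points, namely smoothness of $t\mapsto y_t$ and the standard identification of $\partial_t y_t|_{t=0}$ with a solution of the Jacobi equation, are routine.
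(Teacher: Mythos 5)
Your setup matches the paper's: both use the global Weierstrass representation of Theorem \ref{th-willmore-iso-formula} to make the $K^{\C}$-dressing family $y_t=T_t\sharp y$ globally defined on the closed surface $M$, and both obtain the Jacobi field by differentiating the family at $t=0$. The divergence is in how transversality to the conformal orbit is established, and here your proposal has a genuine gap. The argument you designate as the ``heart of the matter'' --- showing $A\notin\mathfrak{so}(1,5)+C(\eta_{-1})$ via a centralizer computation on the normalized potential, with a separate normalization to absorb the $\mathfrak{p}$-directions of $\mathfrak{so}(1,5)$ --- is only sketched: the decisive claim that $A\in i\mathfrak{k}$ ``cannot be compensated by any real $\xi$ modulo this centralizer'' is asserted, not proved, and the reduction of ``$V$ is Killing'' to ``$[A-\xi,\eta_{-1}]\equiv0$'' is itself nontrivial (the normalized potential is only a gauge-dependent invariant of the harmonic map, and base-point translations do not act by conjugation on it, as you note but do not resolve). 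As written, the proof is incomplete at exactly the step you identify as decisive.

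Ironically, the observation you relegate to a ``reinforcing'' aside is the paper's entire proof. The paper splits into two cases: if $y$ is not conformally equivalent to a minimal surface in $\SSS^4$, then (by Theorem \ref{th-dress-sn}) the dressing family reaches a genuine minimal surface in $\SSS^4$ at some $t_0$; if $y$ is conformally minimal, then (by Proposition \ref{prop-s41} and Theorems \ref{th-min-sn}, \ref{th-min-hn}) the family $y_t$ is not conformally equivalent to a minimal surface in any space form for $0<t<\pi/2$. In either case the conformally invariant property ``conformally equivalent to a minimal surface in a given space form'' changes along the family, so $\{y_t\}$ cannot arise from conformal transformations of $\SSS^4$, and no Lie-algebraic transversality computation is needed. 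If you promote that discriminant argument to the main line (and drop the unproved centralizer step, as well as the overreach of extrapolating the pairwise-inequivalence computation of Proposition \ref{prop-h41} from the Veronese case to ``the general case''), you recover the paper's proof. One caveat applies equally to both versions: concluding that the \emph{initial velocity} at $t=0$ is not Killing from the fact that the \emph{curve} leaves the conformal orbit is a first-order statement deduced from a global one, and neither your write-up nor the paper's spells out that passage; it is not a point on which your proposal is worse than the paper's.
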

							\begin{proof}
								We first consider the case that $y$ is not conformally equivalent to a minimal surface in $\SSS^4$.
								By Theorem 3.7,  the condition that the conformal Gauss map of $y$ is in $\mathcal{M}_L$, is equivalent to saying  that it is coming from a $K^{\C}-$dressing of some minimal surface in $\SSS^4$.
								Therefore by Theorem \ref{th-willmore-iso-formula}, there exists a  family of Willore surfaces $y_t$ such that $y_t$ is real analytic in $t$ and $y_{t}|_{t=0}=y$ and $y_t|_{t=t_0}$ is a minimal surface in $\SSS^4$. So $\{y_t\}$ does not come from any conformal transformations of $\SSS^4$ and the Jacobi field of $y_t$ is not a conformal Killing field.
								
								Now consider the case that $y$ is conformally equivalent to a minimal surface in $\SSS^4$.
								Without lose of generality, we assume $y$ has the potential as the form in Proposition \ref{prop-s41}.	By Proposition \ref{prop-s41} and Theorem \ref{th-willmore-iso-formula}, there exists globally a family of Willore surfaces $y_t$ such that $y_t$ is real analytic in $t$ and $y_{t}$ is not conformally equivalent to any minimal surface in $\SSS^4$ when $0<t<\pi/2$.  So $\{y_t\}$ does not come from any conformal transformations of $\SSS^4$ and the Jacobi field of $y_t$ is not a conformal Killing field.
								
							\end{proof}
							
							For minimal surfaces in $\SSS^4$, we also have the following
							\begin{theorem}Let $y:M\rightarrow \SSS^4$ be an isotropic minimal surface from a closed Riemann surface $M$. Then $y$ is non-rigidity. That is, it admits Jacobi fields different from the  Killing  fields which come from isometric transformations of $S^4$.
							\end{theorem}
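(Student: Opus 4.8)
The plan is to realize the required Jacobi field as the velocity of an explicit one-parameter family of minimal surfaces produced by the minimality-preserving $\R^1$-dressing of Section 4.3, and then to separate it from the Killing fields by examining the induced metric. Since $y$ is isotropic and minimal in the round $\SSS^4$, Theorem \ref{th-willmore-iso-formula}(2) provides a timelike real vector $\mathbf v$ with $\mathbf v^tI_{1,3}\mathbf v=-1$ realizing the minimality condition \eqref{eq-w-iso-m-s}; choosing the model of the round $\SSS^4$ in which this vector is $(1,0,0,0)^t$, the condition \eqref{eq-w-iso-m-s} reduces to $h_2'=h_3'$. Thus, exactly as in Proposition \ref{prop-s41}, I may assume the isotropic Weierstrass data of $y$ satisfy $h_2=h_3$ with $h_1'h_2'\not\equiv0$; in particular, since then $h_4'=-(h_2')^2/h_1'$, both $h_1'$ and $h_4'$ are non-trivial, a fact I will need below.

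Next I would set $T_t=\mathrm{diag}(T_{2,t},I_2)$ and $\tilde y_t:=T_t\sharp y$. Because $T_{2,t}$ fixes the timelike direction $(1,0,0,0)^t$, case (1) of the Proposition in Section 5.2 (equivalently the $T_{2,t}$ result of Section 4.3) shows that every $\tilde y_t$ is conformally equivalent to a minimal surface in the same round $\SSS^4$, with $\tilde y_0=y$; on the level of Weierstrass data the action is the Lopez--Ros type rescaling $(h_1,h_4)\mapsto(e^th_1,e^{-t}h_4)$. Since $M$ is closed and $y$ is isotropic, the representation \eqref{eq-willmore in s4-y-1} is globally defined, so normalizing each lift by its timelike component (as in the examples of Section 5) yields a globally defined, real-analytic family of minimal immersions $\tilde y_t:M\to\SSS^4$ through $y$. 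Differentiating at $t=0$ and projecting onto the normal bundle produces a normal field $V$, which is automatically a Jacobi field because every member of the family is minimal.

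It remains to prove that $V$ is not the normal part of any Killing field $X\in\mathfrak{so}(5)$ of $\SSS^4$. Suppose it were; then, correcting by the tangential reparametrization generated by $X^{\top}$, the family $\tilde y_t$ would agree to first order with the ambient isometric motion $\exp(tX)\circ y$, so the induced metrics $g_t$ would all be isometric to first order and the Gauss curvature would satisfy $\dot K_0:=\tfrac{d}{dt}K_t|_{t=0}=W(K_0)$ for some vector field $W$ on $M$. I would refute this by computing $g_t$ and $K_t$ from the rescaled data $e^th_1,\,h_2,\,e^{-t}h_4$ through \eqref{eq-willmore in s4-y-1}, exactly as for the Veronese family, where $K_0\equiv\tfrac13$ while $K_t$ is non-constant for $t\neq0$, so that $\dot K_0\not\equiv0$ cannot be a directional derivative of the constant $K_0$. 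The main obstacle is to carry out this last step for an arbitrary isotropic minimal $y$ rather than for the explicit examples: here I would exploit that ambient isometries act on the loop-group data by real, hence Hermitian-norm-preserving, transformations, whereas the complex dressing $T_{2,t}$ multiplies the conformal factor $|h_2'|^2+\tfrac12(|h_1'|^2+|h_4'|^2)$ into $|h_2'|^2+\tfrac12(e^{2t}|h_1'|^2+e^{-2t}|h_4'|^2)$, which genuinely varies with $t$ because $h_1'$ and $h_4'$ are both non-trivial. Showing that this variation forces $g_t$ out of its isometry class (for instance by verifying that some curvature moment $\int_M K_t^m\,dA_t$ is non-constant in $t$, or by invoking the degeneration of $\tilde y_t$ to a branched cover of a totally geodesic surface as $t\to+\infty$, which no one-parameter group of ambient isometries can realize) then establishes that $V$ differs from every Killing-field Jacobi field, proving non-rigidity.
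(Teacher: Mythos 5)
Your overall architecture is the same as the paper's: generate a one-parameter family of isotropic minimal surfaces in $\SSS^4$ through $y$ by a minimality-preserving $K^{\C}$-dressing, observe that the velocity is a Jacobi field, and rule out that it is Killing by showing the family genuinely moves the surface. The reduction to $h_2=h_3$ and the global definedness via the Weierstrass-type formula \eqref{eq-willmore in s4-y-1} also match. However, you choose a genuinely different one-parameter group: $T_t=\mathrm{diag}(T_{2,t},I_2)$ with $T_{2,t}\subset SO(1,3,\C)$, acting as the L\'opez--Ros rescaling $(h_1,h_4)\mapsto(e^t h_1,e^{-t}h_4)$, whereas the paper uses $\hat T_t=\mathrm{diag}(I_4,\ast)$ in the $SO(2,\C)$ (normal bundle) factor, which rescales \emph{all} data uniformly, $h_j\mapsto e^t h_j$. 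Both subgroups preserve the isotropy relation $h_1'h_4'+h_2'h_3'=0$ and the minimality condition, so both yield valid families. The paper's choice buys a cheap endgame: under uniform rescaling the components of the lift that are linear in the $h_j$ are of lower order in $e^t$, so as $t\to+\infty$ the surface collapses onto a non-full, totally geodesic $S^2\subset S^4$ --- a qualitative degeneration that no ambient isometry can produce and that requires no curvature computation. Your choice keeps the surface full and forces you into a quantitative comparison of induced metrics. You are, to your credit, more careful than the paper about the logical point that one must refute the \emph{first-order} agreement with an isometric motion, not merely eventual non-congruence of the family.

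That said, your final step is a genuine gap, and you flag it yourself: for an \emph{arbitrary} isotropic minimal $y$ you do not actually prove that the first-order variation of the metric under $(h_1,h_4)\mapsto(e^th_1,e^{-t}h_4)$ fails to be realized by an ambient isometry. The explicit computation is only done for the Veronese family; the proposed fallbacks are not carried out. Concretely: (i) the statement that the conformal factor ``genuinely varies with $t$'' does not by itself exclude that the varying metrics are pairwise isometric via a $t$-dependent diffeomorphism of $M$ --- this is exactly what happens for a family of congruent reparametrized surfaces --- and moreover your displayed conformal factor omits the normalization by the square of the timelike component of the lift, so it is not the induced metric of $\tilde y_t$; (ii) the curvature-moment criterion $\frac{d}{dt}\int_M K_t^m\,dA_t\neq0$ is the right kind of invariant but is asserted, not verified (note that the area itself is useless here, since $W$ and hence $\int_M(1-K_t)\,dA_t$ is constant along the family); (iii) the degeneration as $t\to+\infty$ only shows that the velocity fails to be Killing at \emph{some} $t$, and transferring this back to $t=0$ needs an extra argument (e.g.\ real-analyticity in $t$ of a suitable isometry invariant, plus the observation that an invariant constant near $t=0$ would be constant for all $t$). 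Until one of these is completed, the conclusion that $V$ is not the normal part of a Killing field is not established for general $y$. Switching to the paper's uniform rescaling, or supplying the real-analyticity bridge in (iii), would close the argument.
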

							
							\begin{proof}
								Assume without loss of generality the normalized potential of $y$ is of the form \eqref{eq-b1} with $h_2=h_3$.  	
								Let \[\hat T_t=\left(
								\begin{array}{cccc}
								I_4& 0 & 0\\
								0& \cosh t& i\sinh t \\
								0& -i\sinh t & \cosh t  \\
								\end{array}
								\right)\]
								be a one-parameter subgroup of $K^{\C}$.
								The one-parameter family of normalized potentials $\eta_t$ has the same form as $y$ in \eqref{eq-b1}, except the functions $\{h_j\}$ becomes  $\{e^th_j\}$. Substituting  $\{e^th_j\}$ into \eqref{eq-willmore in s4-y-1}, we obtain the Willmore family $y_t=\frac{1}{y_{0t}}(y_{1t},,y_{2t},y_{3t},y_{4t},y_{5t})$ derived by $\eta_{t}$.  We have that $y_t$ is real analytic in $t$ and for every $t$, $y_t$ is a minimal surface in $S^4$.
								
								Let $t$ tends to $+\infty$. We have that $y_t$ tends to a conformal map into $S^2$. As a consequence,
								$y_t$ can not be derived by an isometric transformations of $S^4$.  Hence the  Jacobi field of $y_t$ is not a Killing field of $y$.
							\end{proof}
							
							We refer to Ejiri's interesting paper  \cite{Ejiri1983} for the discussion of the index of minimal two-spheres in $S^{2m}$. Note that the Willmore deformations contribute explicitly to the index of minimal two-spheres in $S^{4}$ \cite{NS2,Weiner}.
							
							\section{Appendix: Proof of \eqref{eq-estimate}}

							Set $a=e^{2t_0}= k^{-(k-1)}$, $\rho=r^2$. Set $L=a\rho^{k+1}+1-b\rho^{k}-ab\rho$ with $b=\frac{k+1}{k-1}$. Let $\rho_1\in(0,1)$ and $\rho_2\in(1,+\infty)$ be the two solutions to \[L(\rho)=a\rho^{k+1}+1-b\rho^{k}-ab\rho=0.\]
							We can rewrite $W(M_{t_0,1})$ as
							\[W(M_{t_0,1})=2\pi\int^{\rho_1}_{0}  \frac{ak^2(k-1)^2\rho^{k-2}\left(a\rho^{k+1}+1-b\rho^{k}-ab\rho)\right)^2}{\left(a(1+\rho^{k})^2 + k^2\rho^{k-1}(1-a\rho)^2 \right)^2}
							\dd \rho.\]
							Then \eqref{eq-estimate} follows from the following Lemma.
							\begin{lemma}
								\begin{enumerate}
									\item When $k\rightarrow+\infty$, $\rho_1>e^{-3/k^2}$; In particular
									\[\lim_{k\rightarrow\infty}\rho_1=\lim_{k\rightarrow\infty}(\rho_1)^k=1.\]
									\item On $[0,\rho_1]$, $L(\rho)\geq \rho_1^{-1}(\rho_1-\rho).$ When $k\rightarrow+\infty$,
									\begin{equation}\label{eq-estimateofW}
									W(M_{t_0,1})\geq \frac{2\pi k^2(k-1)^2 }{\rho_1^2}I_1, \hbox{ with } I_1= \int^{\rho_1}_{0}\frac{ a\rho^{k-1}(\rho_1-\rho)^2}{\left(2a + k^2\rho^{k-1}  \right)^2}
									\dd \rho.
									\end{equation}
									\item
									Set $\varphi=\rho/\rho_1$. Then $I_1$ is tending to
									\[I_2=\int^{1}_{0}\frac{ a\varphi^{k-1}(1-\varphi)^2}{\left(2a + k^2\varphi^{k-1}  \right)^2} \dd \varphi\]
									when $k\rightarrow+\infty.$
									\item  $I_2>\frac{1}{9}R(a,k)$ with $\delta=(\frac{a}{k^2})^{\frac{1}{k-1}}\in(0,1)$ and
									\[R(a,k)=\frac{1}{k^2}\left(\frac{2}{k-1}-\frac{2\delta}{k}+\frac{\delta^2}{k+1}\right)+ \frac{1}{k^2}\left(-\frac{2\delta}{k-2}+\frac{\delta^2}{k-3}\right)-\frac{a}{k^4}\left(\frac{1}{k-1}-\frac{2}{k-2}+\frac{1}{k-3}\right).\]
									Moreover, when $k\rightarrow+\infty$,
									$R(a,k)=\frac{2}{k^2(k-1)}+o\left(\frac{1}{k^3}\right).$
									\item
									When $k$ is large enough,
									$W(M_{t_0,1})> \frac{1}{3}(k-1)\pi.$
								\end{enumerate}
							\end{lemma}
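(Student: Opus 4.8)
The goal is the lower bound \eqref{eq-estimate}, and the plan is to apply the five parts of the Lemma in sequence, each reducing the energy integral to a more tractable one. The starting point is the exact expression for $W(M_{t_0,1})$ recorded just before the Lemma, obtained from \eqref{eq-yt-W-2} by setting $a=e^{2t_0}$, $\rho=r^2$ and $b=\frac{k+1}{k-1}$ (so $r\,\dd r=\dd\rho/2$ turns the $4\pi$ into $2\pi$). The whole argument rests on one elementary computation, which I would record first, as it drives both (1) and (2):
\[L''(\rho)=k(k+1)\rho^{k-2}(a\rho-1),\]
so that $L$ is concave on $[0,1/a]$, in particular on $[0,1]$ since $1/a=k^{k-1}\gg1$.

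For part (1), since $L(0)=1>0$ and $L$ is concave on $[0,1]$, a concave function dominates the chord through the endpoints of any subinterval and is therefore $\ge$ the minimum of its endpoint values there; hence it suffices to check $L(e^{-3/k^2})>0$ to force the first root $\rho_1$ past $e^{-3/k^2}$. The dominant balance is $1-b\rho^k$: with $\rho=e^{-3/k^2}$ one has $\rho^k=e^{-3/k}$, and the inequality $e^{-3/k}<\frac{k-1}{k+1}$ (equivalently $\frac3k>\frac2{k+1}+O(k^{-2})$) gives $b\rho^k<1$, the two $O(a)$ terms being negligible. This yields $\rho_1>e^{-3/k^2}$, hence $\rho_1\to1$ and $\rho_1^k\in(e^{-3/k},1)\to1$. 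For part (2), concavity again gives the linear lower bound: the chord of $L$ joining $(0,1)$ and $(\rho_1,0)$ is exactly $\rho_1^{-1}(\rho_1-\rho)$, so $L(\rho)\ge\rho_1^{-1}(\rho_1-\rho)$ on $[0,\rho_1]$. Inserting $L^2\ge\rho_1^{-2}(\rho_1-\rho)^2$ together with the crude bounds $\rho^{k-2}\ge\rho^{k-1}$, $(1-a\rho)^2\le1$ and $(1+\rho^k)^2\le2$ into the energy integral reduces it to $\frac{2\pi k^2(k-1)^2}{\rho_1^2}I_1$.

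Parts (3) and (4) evaluate $I_1$. The rescaling $\varphi=\rho/\rho_1$ turns $I_1$ into the same shape as $I_2$ up to factors $\rho_1^{k+2}$ in the numerator and $\rho_1^{k-1}$ in the denominator, both $\to1$ by part (1), giving $I_1\to I_2$. The heart is the lower bound for $I_2$: using $2a+k^2\varphi^{k-1}\le 3\max(a,k^2\varphi^{k-1})$ produces the factor $\frac19$, and splitting $[0,1]$ at $\delta$ (where $k^2\varphi^{k-1}=a$) replaces the squared denominator by $a^2$ on $[0,\delta]$ and by $k^4\varphi^{2k-2}$ on $[\delta,1]$. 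Each piece is then an integral of a polynomial in $\varphi$ against $(1-\varphi)^2$, which integrates in closed form; re-expressing all boundary terms through $\delta^{k-1}=a/k^2$ collects them into the quantity $R(a,k)$. Since $\delta=k^{-(k+1)/(k-1)}\to0$ and $a=k^{-(k-1)}$ is super-exponentially small, every $\delta$- and $a$-weighted term is lower order, leaving the asymptotics $R(a,k)=\frac{2}{k^2(k-1)}+o(k^{-3})$.

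Finally, part (5) chains the estimates: for large $k$,
\[W(M_{t_0,1})\ge\frac{2\pi k^2(k-1)^2}{\rho_1^2}I_1\,\gtrsim\,2\pi k^2(k-1)^2\cdot\tfrac19\cdot\tfrac{2}{k^2(k-1)}=\tfrac{4\pi(k-1)}{9}>\tfrac{(k-1)\pi}{3},\]
the strict margin $\frac49>\frac13$ absorbing the $o(1)$ errors from $\rho_1\to1$, $I_1\to I_2$ and the $R$-asymptotics. I expect part (4) to be the main obstacle: isolating the correct leading constant of $R$ while controlling all $\delta$- and $a$-dependent remainders is delicate, and one must check that the factor-$\frac19$ denominator bound does not degrade the leading order. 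A secondary subtlety is the bound $(1+\rho^k)^2\le2$ in part (2), which in fact fails near $\rho=\rho_1$ (there $\rho_1^k\to1$, so $(1+\rho_1^k)^2\to4$); this is harmless only because the weight $(\rho_1-\rho)^2$ vanishes there while the integral concentrates near $\rho\sim 1/k$, so the bound should be applied on the bulk region with the endpoint contribution estimated separately.
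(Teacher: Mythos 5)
Your proposal follows essentially the same route as the paper's proof: concavity of $L$ (your $L''=k(k+1)\rho^{k-2}(a\rho-1)$ is the correct form of what the paper writes with a typo) gives the chord bound, the rescaling $\varphi=\rho/\rho_1$ and the split at $\delta$ with the factor $\tfrac19$ reproduce the paper's estimate of $I_2$, and the final combination $\tfrac{2}{9}\cdot\tfrac{2}{k^2(k-1)}\cdot k^2(k-1)^2=\tfrac{4(k-1)}{9}>\tfrac{k-1}{3}$ is identical. The one point where you diverge — worrying that $(1+\rho^k)^2\le 2$ fails near $\rho_1$ and proposing a region splitting — is unnecessary: the paper uses the \emph{combined} inequality $a(1+\rho^{k})^2 + k^2\rho^{k-1}(1-a\rho)^2<2a+k^2\rho^{k-1}$, which holds on all of $(0,1]$ because the deficit $k^2\rho^{k-1}\bigl(2a\rho-a^2\rho^2\bigr)$ in the second term absorbs the excess $a\bigl((1+\rho^k)^2-2\bigr)$ in the first.
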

							\begin{proof}
								(1). From $0<\rho_1<1$ and $a\rho_1^{k+1}+1-b\rho_1^{k}-ab\rho_1=0$, we have
								\[(\rho_1)^{k+1}=\frac{1-ab\rho_1}{b\rho_1^{-1}-a}\geq\frac{1-ab}{b-a}=1+\frac{(1-b)(1+a)}{b-a}=1+\frac{1+a}{b-a}\frac{2}{k+1}.\]
								From this, $\lim_{k\rightarrow\infty}\rho_1=\lim_{k\rightarrow\infty}(\rho_1)^k=1.$
								
								(2). Since $L'(\rho)=a(k+1)\rho^{k}-bk\rho^{k-1}-ab,\ \ L''(\rho)=ak(k+1)\rho^{k-1}-bk(k-1)\rho^{k-2}=ak(k+1)\rho^{k-2}(\rho-1).$ So on $(0,\rho_1)$, $L''(\rho)<0$, from which we have $L(\rho)\geq \rho_1^{-1}(\rho_1-\rho)$. And \eqref{eq-estimateofW} follows from this and the fact that $a(1+\rho^{k})^2 + k^2\rho^{k-1}(1-a\rho)^2<2a+k^2\rho^{k-1}$.
								
								(3). Since $\rho=\rho_1\varphi$, we have
								\[I_1=\rho_1^{k+1}\int^{1}_{0}\frac{ a\varphi^{k-1}(1-\varphi)^2}{\left(2a + k^2\rho_1^{k-1}\varphi^{k-1}  \right)^2} \dd \varphi.\]
								Since $\lim_{k\rightarrow+\infty}\rho_1=\lim_{k\rightarrow+\infty}\rho_1^{k+1}=\lim_{k\rightarrow+\infty}\rho_1^{k-1}=1$, we have
								\[1<\frac{2a+k^2\varphi^{k-1} }{2a + k^2\rho_1^{k-1}\varphi^{k-1}}< \frac{1}{\rho_1^{k-1}}\rightarrow 1\]
								as $k\rightarrow+\infty$. (3) follows from this.
								
								(4) First we have $k^2\delta^{k-1}=a$. So
								\[2a+k^2\varphi^{k-1}<3a,~~ \forall\varphi\in(0,\delta);\ ~ 2a+k^2\varphi^{k-1}<3k^2\varphi^{k-1},~~ \forall\varphi\in(\delta,1).\]
								By substituting $\delta^{k-1}=\frac{a}{k^2}$, we have
								\[\begin{split}
								I_2&>\int^{\delta}_{0}\frac{ a\varphi^{k-1}(1-\varphi)^2}{9a^2} \dd \varphi+\int^{1}_{\delta}\frac{ a\varphi^{k-1}(1-\varphi)^2}{9k^4\varphi^{2(k-1)}} \dd \varphi\\
								&= \frac{\delta^{k-1}}{9a}\left(\frac{1}{k-1}-\frac{2\delta}{k}+\frac{\delta^2}{k+1}\right)+ \frac{a\delta^{1-k}}{9k^4}\left(\frac{1}{k-1}-\frac{2\delta}{k-2}+\frac{\delta^2}{k-3}\right)\\
								&~~-\frac{a}{9k^4}\left(\frac{1}{k-1}-\frac{2}{k-2}+\frac{1}{k-3}\right)\\
								&= \frac{1}{9}R(a,k).\\
								\end{split}
								\]
								When $k\rightarrow+\infty$, $\delta= k^{-1-\frac{2}{k-1}}\rightarrow0$ and hence
								$R(a,k)=\frac{2}{k^2(k-1)}+o\left(\frac{1}{k^3}\right).$
								This finishes (4).
								
								(5) As a consequence, we have
								\[W(M_{t_0,1})\geq \frac{2\pi k^2(k-1)^2 }{\rho_1^2}I_1\geq\frac{(4-\varepsilon)\pi}{9}(k-1),\]
								for some $\varepsilon\in(0,1/2)$ when $k\rightarrow+\infty$, which finishes (5).
							\end{proof}

							\def\refname{Reference}

						\end{document}